\newtheorem{theorem}{Theorem}[section]
\newtheorem{lemma}[theorem]{Lemma}
\newtheorem{proposition}[theorem]{Proposition}
\newtheorem{corollary}[theorem]{Corollary}
\newtheorem{exAux}[theorem]{Example}
\newenvironment{example}{\begin{exAux} \rm}{\end{exAux}}
\newtheorem{Def}[theorem]{Definition}
\newenvironment{definition}{\begin{Def} \rm}{\end{Def}}
\newtheorem{Note}[theorem]{Note}
\newenvironment{note}{\begin{Note} \rm}{\end{Note}}
\newtheorem{Claim}[theorem]{Claim}
\newtheorem{Problem}[theorem]{Problem}
\newtheorem{Rem}[theorem]{Remark}
\newtheorem{Not}[theorem]{Notation}
\newtheorem{Conj}[theorem]{Conjecture}
\newenvironment{conjecture}{\begin{Conj}}{\end{Conj}}
\newtheorem{Ass}[theorem]{Assumption}
\newenvironment{proof}{\medskip\noindent{\bf Proof.\ }}{\qed\medskip}
\newenvironment{proofof}[1]{\medskip\noindent{\bf Proof  of {#1}.\ 
}}{\qed\medskip}
\newcommand{\qed}{\hfill\mbox{$\Box$\qquad\qquad}}
\renewcommand{\th}{\theta}
\newcommand{\K}{\mathbb{K}}
\newcommand{\ot}{\otimes}
\newcommand{\ox}{{\overline{x}}}
\newcommand{\oy}{{\overline{y}}}
\renewcommand{\indent}{\hspace{6mm}}
\begin{document}
\thispagestyle{empty}

\begin{center}
\LARGE \bf
On the shape of a tridiagonal pair
\end{center}

\smallskip

\begin{center}
\Large
Kazumasa Nomura and 
Paul Terwilliger
\end{center}

\smallskip

\begin{quote}
\small 
\begin{center}
\bf Abstract
\end{center}
\indent
Let $\K$ denote a field and let $V$ denote a vector space over $\K$ with 
finite positive dimension.
We consider a pair of linear transformations $A:V \to V$
and $A^*:V \to V$ that satisfy the following conditions:
(i)
each of $A,A^*$ is diagonalizable;
(ii)
there exists an ordering $\lbrace V_i\rbrace_{i=0}^d$ of the eigenspaces of 
$A$ such that
$A^* V_i \subseteq V_{i-1} + V_{i} + V_{i+1}$ for $0 \leq i \leq d$,
where $V_{-1}=0$ and $V_{d+1}=0$;
(iii)
there exists an ordering $\lbrace V^*_i\rbrace_{i=0}^\delta$ 
of the eigenspaces of $A^*$ such that
$A V^*_i \subseteq V^*_{i-1} + V^*_{i} + V^*_{i+1}$ for
 $0 \leq i \leq \delta$,
where $V^*_{-1}=0$ and $V^*_{\delta+1}=0$;
(iv) 
there is no subspace $W$ of $V$ such that
$AW \subseteq W$, $A^* W \subseteq W$, $W \neq 0$, $W \neq V$.
We call such a pair a {\it tridiagonal pair} on $V$.
It is known that $d=\delta$ and for $0 \leq i \leq d$
the dimensions of $V_i$, $V^*_i$, $V_{d-i}$, $V^*_{d-i}$ coincide; 
we denote this common dimension by $\rho_i$.
In this paper we prove that $\rho_i \leq \rho_0  \binom{d}{i}$ 
for $0 \leq i \leq d$.
It is already known that $\rho_0=1$ if $\K$ is algebraically closed.

\bigskip
\noindent
{\bf Keywords}. 
Tridiagonal pair, Leonard pair, $q$-Racah polynomial.

\noindent 
{\bf 2000 Mathematics Subject Classification}. 
Primary: 15A21. Secondary: 
05E30, 05E35.
\end{quote}

\section{Introduction}

\indent
Throughout this paper $\K$ denotes a field.

\medskip

We begin by recalling the notion of a tridiagonal pair. 
We will use the following terms.
Let $V$ denote a vector space over $\K$ with finite positive dimension.
For a linear transformation $A:V\to V$ and a subspace $W \subseteq V$,
we call $W$ an {\em eigenspace} of $A$ whenever $W\not=0$ and there exists 
$\th \in \K$ such that $W=\{v \in V \,|\, Av = \th v\}$;
in this case $\th$ is the {\em eigenvalue} of $A$ associated with $W$.
We say that $A$ is {\em diagonalizable} whenever $V$ is spanned by the 
eigenspaces of $A$.

\begin{definition}  {\rm \cite[Definition 1.1]{ITT}} \label{def:tdp}
Let $V$ denote a vector space over $\K$ with finite positive dimension. 
By a {\em tridiagonal pair} on $V$ we mean an ordered pair of linear 
transformations $A:V \to V$ and $A^*:V \to V$ that satisfy the following 
four conditions.
\begin{enumerate}
\item 
Each of $A,A^*$ is diagonalizable.
\item 
There exists an ordering $\{V_i\}_{i=0}^d$ of the eigenspaces of $A$ 
such that 
\begin{equation}               \label{eq:t1}
A^* V_i \subseteq V_{i-1} + V_i+ V_{i+1} \qquad \qquad (0 \leq i \leq d),
\end{equation}
where $V_{-1} = 0$ and $V_{d+1}= 0$.
\item
There exists an ordering $\{V^*_i\}_{i=0}^{\delta}$ of the eigenspaces of 
$A^*$ such that 
\begin{equation}                \label{eq:t2}
A V^*_i \subseteq V^*_{i-1} + V^*_i+ V^*_{i+1} 
\qquad \qquad (0 \leq i \leq \delta),
\end{equation}
where $V^*_{-1} = 0$ and $V^*_{\delta+1}= 0$.
\item 
There does not exist a subspace $W$ of $V$ such  that $AW\subseteq W$,
$A^*W\subseteq W$, $W\not=0$, $W\not=V$.
\end{enumerate}
We say the pair $A,A^*$ is {\em over $\K$}.
\end{definition}

\begin{note}   \label{note:star}        \samepage
According to a common notational convention $A^*$ denotes 
the conjugate-transpose of $A$. We are not using this convention.
In a tridiagonal pair $A,A^*$ the linear transformations $A$ and $A^*$
are arbitrary subject to (i)--(iv) above.
\end{note}

\medskip

We refer the reader to 
\cite{AC1,AC2,AC3,Funk2,
ITT,IT:shape,IT:uqsl2hat,IT:nonnil,IT:Krawt,
IT:Drinfeld,IT:qRacah,IT:sharpen,IT:Mock,IT:aug,
N:aw,N:refine,N:heightone,
NT:split,NT:sharp,NT:towards,NT:structure,NT:muconj,NT:qRacah,
T:survey,Vidar} 
for background information about tridiagonal pairs.
See
\cite{AW,BIbook,B1,B2,B3,B4,BK1,BK2,BK3,BT1,BT2,BT3,CMT,Cau,Cur1,Cur2,
DR,Dav1,Dav2,Eg,Eld,Funk1,Go,GLZ,GH1,GH2,H1,H2,HT,
IT:qtet,IT:drg,IT:sl2loop,IT:qinverting,IT:A,ITW,
KoeSwa,L,Mik1,Mik2,Mik3,
NT:balanced,NT:formula,NT:det,NT:mu,NT:affine,NT:span,NT:switch,
Ros,Pas,Tanaka1,Tanaka2,Tang,T:subconst1,T:subconst2,
T:Leonard,T:24points,T:qSerre,T:intro,T:qRacah,T:split,T:array,
T:TD-D,T:survey,TV,V1,V2,Z1,Z2,God,Kim,Mik4,OS,S,AK,AK2,Koor,Koor2,KoorO,VZ}
for related topics.

\medskip

In order to motivate our results we recall a few facts about tridiagonal pairs.
Let $A,A^*$ denote a tridiagonal pair on $V$, as in Definition \ref{def:tdp}. 
By \cite[Lemma 4.5]{ITT} the integers $d$ and $\delta$ from (ii), (iii) are 
equal; we call this common value the {\em diameter} of the pair.
An ordering of the eigenspaces of $A$ (resp. $A^*$)
is said to be {\em standard} whenever it satisfies
\eqref{eq:t1} (resp. \eqref{eq:t2}). We comment on the uniqueness of the
standard ordering. Let $\{V_i\}_{i=0}^d$ denote a standard
ordering of the eigenspaces of $A$. By \cite[Lemma 2.4]{ITT}, the ordering
$\{V_{d-i}\}_{i=0}^d$ is also standard and no further ordering
is standard. A similar result holds for the eigenspaces
of $A^*$. Let $\{V_i\}_{i=0}^d$ (resp. $\{V^*_i\}_{i=0}^d$) 
denote a standard ordering of the eigenspaces of $A$ (resp. $A^*$).
By \cite[Corollary 5.7]{ITT}, for $0 \leq i \leq d$ the spaces $V_i$, $V^*_i$
have the same dimension; we denote this common dimension by $\rho_i$. 
By \cite[Corollaries 5.7, 6.6]{ITT} the sequence $\{\rho_i\}_{i=0}^d$ is 
symmetric and unimodal;
that is $\rho_i=\rho_{d-i}$ for $0 \leq i \leq d$ and
$\rho_{i-1} \leq \rho_i$ for $1 \leq i \leq d/2$.
We call the sequence $\{\rho_i\}_{i=0}^d$ the {\em shape} of $A,A^*$.
We now state our main result.

\medskip

\begin{theorem}   \label{thm:main} \samepage
The shape $\{\rho_i\}_{i=0}^d$ of a tridiagonal pair satisfies
\[
  \rho_i \leq  \rho_0 \binom{d}{i}  \qquad\qquad (0 \leq i \leq d).
\]
\end{theorem}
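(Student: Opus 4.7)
The plan is to exhibit a spanning set of $V_i$ given by the images of $\binom{d}{i}$ linear maps $V_0 \to V_i$. Since each such map has rank at most $\rho_0 = \dim V_0$, this yields $\rho_i \leq \rho_0 \binom{d}{i}$.

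Setup: Decompose $A^* = L + D + R$ with respect to the $A$-eigenspace grading $V = \bigoplus V_i$, where $L V_i \subseteq V_{i-1}$, $D V_i \subseteq V_i$, $R V_i \subseteq V_{i+1}$; in particular $L V_0 = 0$ and $R V_d = 0$. For each $i$-subset $S \subseteq \{1, 2, \ldots, d\}$, form the length-$d$ word $w_S$ placing $R$ at positions in $S$ and a fixed ``neutral'' letter (say $D$, or the identity) elsewhere, and let $\Phi_S$ denote the right-to-left composition of those letters. Then $\Phi_S(V_0) \subseteq V_i$, and there are exactly $\binom{d}{i}$ such maps. The heart of the proof is the claim that $V_i = \sum_{|S|=i} \Phi_S(V_0)$.

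To prove the claim, I would let $W = \bigoplus_i \left( \sum_{|S|=i} \Phi_S(V_0) \right)$, ranging over all subsets and all $i$, so that $W \subseteq V$ and (with a suitable choice of neutral letter) $W \supseteq V_0$. The aim is to show $W$ is invariant under $A$ and $A^*$; given $W \neq 0$, condition (iv) of Definition \ref{def:tdp} then forces $W = V$, and hence $V_i = W \cap V_i = \sum_{|S|=i} \Phi_S(V_0)$. Invariance under $A$ is immediate since each summand lies in a single $A$-eigenspace. Invariance under $A^*$ requires rewriting length-$(d+1)$ expressions (obtained by applying $L+D+R$ to a length-$d$ word) as combinations of length-$d$ words in the chosen alphabet. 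Two reductions are needed: (a) pushing any $L$ rightward via the tridiagonal relations implied by conditions (i)--(iii), so that each $L$ is eventually straightened or absorbed into lower-length commutator corrections and ultimately annihilated on $V_0$; and (b) truncating overly long strings of $\{D,R\}$-letters using the minimal polynomial of $A^*$ (of degree $\leq d+1$).

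The main obstacle is precisely this $A^*$-invariance step, together with the correct selection of the neutral letter. Using only $D$ can fail because $D|_{V_0}$ need not be invertible, forcing the inclusion $V_0 \subseteq W$ to be verified separately; one may instead have to use the identity or a shift $A^* - \lambda I$, or work with polynomial combinations of $D$ and the identity. Orchestrating the straightening of $L$'s with the truncation of long words, all while preserving the two-letter-word structure, is where the bulk of the technical work lies. An alternative worth considering is to work in the split decomposition of the tridiagonal pair (with its explicit raising and lowering operators built from $A - \theta_i I$ and $A^* - \theta^*_i I$), which provides a more structured setting in which analogous arguments may admit cleaner bookkeeping.
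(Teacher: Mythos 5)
Your overall architecture---exhibit $E_iV$ as a sum of $\binom{d}{i}$ images of a $\rho_0$-dimensional space, each under a single linear map, and establish the spanning property by showing a candidate subspace is $A$- and $A^*$-invariant and invoking condition (iv)---is the same skeleton as the paper's proof. But the step you flag as ``the heart of the proof'' and then leave open is not a technical loose end; it is essentially the entire content of the paper. The spanning claim $V_i=\sum_{|S|=i}\Phi_S(V_0)$ is an instance of the problem the paper spends Sections 4--7 solving: reducing an arbitrary word in the two ``tridiagonal'' operators to a controlled family of $\binom{d}{i}$ words. The paper does this in the idempotent algebra $T$, proving that arbitrary words in the $e_i,e^*_j$ have the same span as the \emph{zigzag} words (Theorem \ref{thm:span}), via an explicit basis of $D^*\ot D\ot D^*\ot D$ (the positive/negative/weight induction of Section 6) followed by a separate induction on word length with the combinatorial Claims 1--4 handling length $\geq 5$; it then counts the relevant (nonredundant lifting) zigzag words by a bijection with subsets of $\{1,\dots,d\}$ (Lemma \ref{lem:ds}). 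Nothing in your sketch substitutes for this.

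Concretely, the mechanisms you propose for the rewriting are unlikely to work as described. The components $L,D,R$ of $A^*$ relative to the $A$-grading satisfy no usable commutation relations with one another; the relations actually available are $E_j(A^*)^kE_i=0$ for $k<|i-j|$ (Lemma \ref{lem:trid}) and the minimal polynomials of $A$ and $A^*$, and these control only certain \emph{sums} of words, not individual letters, so there is no straightforward sense in which an $L$ can be ``pushed rightward.'' Moreover your specific candidate family (length-$d$ words in $\{R,N\}$ with $i$ letters $R$) is not the family that the paper shows to span: the paper's $\binom{d}{i}$ nonredundant lifting words have varying lengths, their index sequences zigzag in a prescribed alternating pattern (Theorem \ref{thm:p}), and their outputs are images of \emph{two} different $\rho_0$-dimensional spaces, $E_0V$ and $E^*_0V$, not of $V_0$ alone. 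So even granting a rewriting procedure, you would still need to prove that your particular $\binom{d}{i}$ maps suffice, and no argument for that is given. As it stands the proposal is a plausible program, not a proof.
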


\medskip

Consider the scalar $\rho_0$ in Theorem \ref{thm:main}. 
By \cite[Theorem 1.3]{NT:structure}, 
if the tridiagonal pair is over an algebraically closed field then $\rho_0 =1$. 

\medskip

\begin{corollary}    \label{cor:main}
Let $\{\rho_i\}_{i=0}^d$ denote the shape of a tridiagonal pair over an 
algebraically closed field. Then
\[
   \rho_i \leq \binom{d}{i} \qquad\qquad        (0 \leq i \leq d).
\]
\end{corollary}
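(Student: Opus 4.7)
\medskip

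The plan is to obtain the corollary as a one-line consequence of two results already in place. First, Theorem~\ref{thm:main} of this paper supplies the bound $\rho_i \leq \rho_0 \binom{d}{i}$ for every tridiagonal pair over an arbitrary field, with no algebraic-closure hypothesis on $\K$. Second, the remark immediately following Theorem~\ref{thm:main} recalls from \cite[Theorem 1.3]{NT:structure} that when the underlying field is algebraically closed one has $\rho_0 = 1$. Combining these two facts yields $\rho_i \leq \rho_0 \binom{d}{i} = \binom{d}{i}$ for $0 \leq i \leq d$, which is exactly the assertion of Corollary~\ref{cor:main}.

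Because both ingredients are already established (Theorem~\ref{thm:main} inside the paper, and the equality $\rho_0 = 1$ imported verbatim from \cite{NT:structure}), there is essentially no obstacle to overcome here; the corollary is a packaging statement rather than a new theorem. The only point that needs a moment's verification is that the notion of tridiagonal pair employed in \cite{NT:structure} matches Definition~\ref{def:tdp}, which is the case, so that the value $\rho_0$ in \cite[Theorem 1.3]{NT:structure} genuinely coincides with the $\rho_0$ appearing in Theorem~\ref{thm:main}. I would therefore present the proof as a short paragraph that applies Theorem~\ref{thm:main} for the inequality and cites \cite[Theorem 1.3]{NT:structure} to substitute $\rho_0 = 1$, with no further computation required.
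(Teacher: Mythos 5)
Your proposal is correct and is exactly the argument the paper intends: the corollary follows immediately by combining Theorem~\ref{thm:main} with the fact, quoted just before the corollary from \cite[Theorem 1.3]{NT:structure}, that $\rho_0=1$ over an algebraically closed field. No further comment is needed.
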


\medskip

We have some comments on Theorem \ref{thm:main} and Corollary \ref{cor:main}. 
As far as we know, the first reference in the literature to anything 
resembling these results appears in \cite{ITT}, where Corollary \ref{cor:main}
appears as a conjecture.
Between then and now the conjecture was proven in the following special cases. 
Associated with a tridiagonal pair is a parameter $q$ that is used to describe 
the eigenvalues.
There is a family of tridiagonal pairs with $q=1$ said to have 
{\em Krawtchouk type} \cite{IT:Krawt}.
There is a family of tridiagonal pairs with $q \neq 1$,
$q \neq -1$ said to have {\em $q$-geometric type} 
\cite{IT:uqsl2hat,IT:nonnil,IT:qinverting} or
{\em $q$-Serre type} \cite{AC1,AC2,AC3}. 
The most general family of tridiagonal pairs with $q \neq 1$,
$q \neq -1$ is said to have {\em $q$-Racah  type} \cite{IT:qRacah}.
The first progress on proving this conjecture appeared in \cite{IT:shape}.
In that paper the conjecture was proven for $q$-geometric type, 
assuming that $\K$ has characteristic $0$.
Then in \cite{IT:Krawt} the conjecture was proven for Krawtchouk type, 
assuming $\K$ has characteristic $0$.
Then in \cite{IT:aug} the conjecture was proven for $q$ not a root of 
unity, assuming $\K$ has characteristic $0$.
In \cite{IT:qRacah} the conjecture was proven for $q$-Racah type.
In our proof of Theorem \ref{thm:main} we do not formally use the above 
partial results. 
Our proof is case-free and does not refer to $q$. However we would like to 
acknowledge that the intuition gained by studying the above partial results 
was invaluable in finding our case-free approach.
We would also like to acknowledge many conversations between T. Ito and 
the second author on the general subject of this paper.

\medskip

Our proof of Theorem \ref{thm:main} is contained in Section
\ref{sec:proofofmain}.

\section{Tridiagonal systems}

\indent
When working with a tridiagonal pair, it is often convenient to consider
a closely related object called a tridiagonal system.
To define a tridiagonal system, we recall a few concepts from linear algebra.
Let $V$ denote a vector space over $\K$ with finite positive dimension.
Let ${\rm End}(V)$ denote the $\K$-algebra of all linear
transformations from $V$ to $V$.
Let $A$ denote a diagonalizable element of $\text{End}(V)$.
Let $\{V_i\}_{i=0}^d$ denote an ordering of the eigenspaces of $A$
and let $\{\th_i\}_{i=0}^d$ denote the corresponding ordering of the 
eigenvalues of $A$.
For $0 \leq i \leq d$ define $E_i \in \text{End}(V)$ such that 
$(E_i-I)V_i=0$ and $E_iV_j=0$ for $j \neq i$ $(0 \leq j \leq d)$.
Here $I$ denotes the identity of $\mbox{\rm End}(V)$.
We call $E_i$ the {\em primitive idempotent} of $A$ corresponding to $V_i$
(or $\th_i$).
Observe that
(i) $I=\sum_{i=0}^d E_i$;
(ii) $E_iE_j=\delta_{i,j}E_i$ $(0 \leq i,j \leq d)$;
(iii) $V_i=E_iV$ $(0 \leq i \leq d)$;
(iv) $A=\sum_{i=0}^d \theta_i E_i$.
Moreover
\begin{equation}         \label{eq:defEi}
  E_i=\prod_{\stackrel{0 \leq j \leq d}{j \neq i}}
          \frac{A-\theta_jI}{\theta_i-\theta_j}.
\end{equation}
Note that each of $\{A^i\}_{i=0}^d$, $\{E_i\}_{i=0}^d$ is a basis for the 
$\K$-subalgebra of $\mbox{\rm End}(V)$ generated by $A$.

\medskip

Now let $A,A^*$ denote a tridiagonal pair on $V$.
An ordering of the primitive idempotents or eigenvalues of $A$ (resp. $A^*$)
is said to be {\em standard} whenever the corresponding ordering of the 
eigenspaces of $A$ (resp. $A^*$) is standard.

\medskip

\begin{definition} \cite[Definition 2.1]{ITT} \label{def:TDsystem}
Let $V$ denote a vector space over $\K$ with finite positive dimension.
By a {\em tridiagonal system} on $V$ we mean a sequence
\[
 \Phi=(A;\{E_i\}_{i=0}^d;A^*;\{E^*_i\}_{i=0}^d)
\]
that satisfies (i)--(iii) below.
\begin{itemize}
\item[(i)]
$A,A^*$ is a tridiagonal pair on $V$.
\item[(ii)]
$\{E_i\}_{i=0}^d$ is a standard ordering
of the primitive idempotents of $A$.
\item[(iii)]
$\{E^*_i\}_{i=0}^d$ is a standard ordering
of the primitive idempotents of $A^*$.
\end{itemize}
We say $\Phi$ is {\em over} $\K$.
We call $V$ the {\em vector space underlying $\Phi$}.
\end{definition}

\medskip

The following result is immediate from lines (\ref{eq:t1}),  (\ref{eq:t2})
and Definition \ref{def:TDsystem}.

\medskip

\begin{lemma} {\rm \cite[Lemma 2.5]{NT:towards}}  \label{lem:trid}  \samepage
Let $(A;\{E_i\}_{i=0}^d;A^*;\{E^*_i\}_{i=0}^d)$ denote a tridiagonal system.
Then for $0 \leq i,j,k \leq d$ the following {\rm (i)}, {\rm (ii)} hold.
\begin{itemize}
\item[\rm (i)]
$E^*_i A^k E^*_j =0\;\;$ if $k<|i-j|$.
\item[\rm (ii)]
$E_i A^{*k} E_j = 0\;\;$ if  $k<|i-j|$.
\end{itemize}
\end{lemma}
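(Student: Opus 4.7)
The plan is to prove (i) by induction on $k$; part (ii) then follows by the symmetry that swaps the roles of $(A,\{E_i\})$ and $(A^*,\{E^*_i\})$ (this symmetry is built into Definition \ref{def:tdp} and Definition \ref{def:TDsystem}, using that $d=\delta$).

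For (i), the key step is to establish, by induction on $k \geq 0$, the containment
\[
A^k V^*_j \;\subseteq\; \sum_{\ell=j-k}^{j+k} V^*_\ell \qquad (0 \leq j \leq d),
\]
with the convention $V^*_\ell = 0$ for $\ell<0$ and $\ell>d$. The base case $k=0$ is immediate. For the inductive step, one applies $A$ to each term on the right and invokes condition (iii) of Definition \ref{def:tdp}, namely $A V^*_\ell \subseteq V^*_{\ell-1}+V^*_\ell+V^*_{\ell+1}$, which yields the containment for $k+1$ after collecting indices.

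Now suppose $k<|i-j|$. Writing $V^*_j = E^*_j V$, we have
\[
A^k E^*_j V \;=\; A^k V^*_j \;\subseteq\; \sum_{\ell=j-k}^{j+k} V^*_\ell.
\]
The hypothesis $k<|i-j|$ ensures $i \notin \{j-k,\ldots,j+k\}$. Since $E^*_i$ is the primitive idempotent corresponding to $V^*_i$, it acts as zero on $V^*_\ell$ for every $\ell \neq i$. Hence $E^*_i$ annihilates the entire sum, giving $E^*_i A^k E^*_j V = 0$, and therefore $E^*_i A^k E^*_j = 0$ as required.

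There is no real obstacle here; the argument is a direct unwinding of the definitions. The only points requiring a touch of care are (a) applying $d=\delta$ so that the indices in (iii) match those of (ii), and (b) handling the boundary conventions $V^*_{-1}=V^*_{d+1}=0$, both of which are automatic from Definition \ref{def:tdp}.
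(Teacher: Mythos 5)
Your argument is correct and is exactly the standard unwinding that the paper has in mind when it says the lemma is ``immediate'' from \eqref{eq:t1}, \eqref{eq:t2} and Definition \ref{def:TDsystem}: the induction giving $A^k V^*_j \subseteq \sum_{\ell=j-k}^{j+k} V^*_\ell$ followed by applying the orthogonal idempotent $E^*_i$. No differences worth noting.
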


\begin{definition}        \label{def}        \samepage
Let $\Phi=(A;\{E_i\}_{i=0}^d;A^*;\{E^*_i\}_{i=0}^d)$ denote a tridiagonal 
system on $V$.
For $0 \leq i \leq d$ let $\th_i$ (resp. $\th^*_i$) denote the eigenvalue of 
$A$ (resp. $A^*$) associated with the eigenspace $E_iV$ (resp. $E^*_iV$).
We emphasize that the scalars $\{\th_i\}_{i=0}^d$, $\{\th^*_i\}_{i=0}^d$
are contained in $\K$, and that
\begin{equation}    \label{eq:distinct}
  \th_i \neq \th_j, \qquad \th^*_i \neq \th^*_j
  \qquad\qquad \text{ if $i \neq j$ } \qquad (0 \leq i,j \leq d).
\end{equation}
We call $\{\th_i\}_{i=0}^d$ (resp. $\{\th^*_i\}_{i=0}^d$)
the {\em eigenvalue sequence} (resp. {\em dual eigenvalue sequence}) of $\Phi$.
\end{definition}

\begin{lemma}  {\rm \cite[Theorem 11.1]{ITT}}  \label{lem:corec} \samepage 
With reference to Definition {\rm \ref{def}},
the expressions
\begin{equation}    \label{eq:indep}
 \frac{\th_{i-2}-\th_{i+1}}{\th_{i-1}-\th_{i}},  \qquad\qquad
 \frac{\th^*_{i-2}-\th^*_{i+1}}{\th^*_{i-1}-\th^*_{i}}
\end{equation}
are equal and independent of $i$ for $2 \leq i \leq d-1$.
\end{lemma}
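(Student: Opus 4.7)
\emph{Reformulation.} The independence of $\lambda_i := (\theta_{i-2}-\theta_{i+1})/(\theta_{i-1}-\theta_i)$ from $i$ is equivalent to the existence of scalars $\beta, \gamma$ (with $\beta+1 = \lambda_i$) satisfying the three-term linear recurrence
$\theta_{i-1} - \beta \theta_i + \theta_{i+1} = \gamma$ for $1 \le i \le d-1$, together with the analogous recurrence for $\{\theta^*_i\}$ using the \emph{same} $\beta$ and some $\gamma^*$. The target of the proof is to produce such $\beta, \gamma, \gamma^*$.

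\emph{Strategy via an Askey-Wilson-type identity.} I would aim to establish the operator identity
$$[A,D] = 0, \qquad D := A^2A^* - \beta AA^*A + A^*A^2 - \gamma(AA^*+A^*A) - \varrho A^*,$$
for appropriate scalars $\beta, \gamma, \varrho$, together with its dual $[A^*, D^*] = 0$ with the same $\beta$. Using $AE_j = \theta_jE_j$ and Lemma \ref{lem:trid}(ii), a direct computation gives
$$E_i D E_j = f(\theta_i,\theta_j) \cdot E_i A^* E_j, \qquad f(x,y) := x^2 - \beta xy + y^2 - \gamma(x+y) - \varrho,$$
which vanishes automatically when $|i-j| \ge 2$. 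Since $E_i[A,D]E_j = (\theta_i - \theta_j)E_iDE_j$, the identity $[A,D] = 0$ reduces to the polynomial vanishings $f(\theta_i,\theta_{i+1}) = 0$ for $0 \le i \le d-1$ --- here one uses that $E_iA^*E_{i+1}\ne 0$, which follows from irreducibility condition (iv) of Definition \ref{def:tdp} by a standard invariant-subspace argument.

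\emph{From the identity to the recurrence.} Once the quadratic vanishings $f(\theta_i,\theta_{i+1}) = 0$ are in hand, subtracting the $(i-1)$-th equation from the $i$-th gives $(\theta_{i-1}^2 - \theta_{i+1}^2) - \beta\theta_i(\theta_{i-1}-\theta_{i+1}) - \gamma(\theta_{i-1}-\theta_{i+1}) = 0$, and dividing by $\theta_{i-1}-\theta_{i+1}\ne 0$ (nonzero by \eqref{eq:distinct}) yields exactly $\theta_{i-1}+\theta_{i+1}-\beta\theta_i = \gamma$, the target recurrence on $\{\theta_i\}$. Running the same argument with the roles of $A$ and $A^*$ swapped produces the recurrence on $\{\theta^*_i\}$, and the common $\beta$ appearing in both the primary and dual Askey-Wilson identities gives the equality of the two ratios in \eqref{eq:indep}.

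\emph{Main obstacle.} The crux is establishing the identity $[A,D] = 0$ --- equivalently, producing \emph{common} scalars $(\beta,\gamma,\varrho)$ so that all $d$ equations $f(\theta_i,\theta_{i+1}) = 0$ hold simultaneously. Since this is $d$ equations in only $3$ unknowns, one can uniquely determine $(\beta,\gamma,\varrho)$ from any three consecutive equations (say $i=0,1,2$, assuming $d\ge 3$; the statement is vacuous for $d\le 2$), but propagating compatibility to the remaining $i$ requires the full tridiagonal structure rather than just linear algebra. My approach would be to fix $(\beta,\gamma,\varrho)$ via these first three constraints and then argue, using irreducibility (iv) together with an invariant-subspace argument involving the algebra generated by $A$ and $A^*$, that the remaining equations are forced. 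Matching the $\beta$ parameters across the primary and dual Askey-Wilson identities is the final wrinkle, requiring a joint argument that combines the two. In \cite[Section 11]{ITT} this propagation and matching is carried out via the split decomposition machinery for tridiagonal systems, and I expect reproducing that kind of structural argument to be the principal technical hurdle.
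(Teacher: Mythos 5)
The paper itself gives no proof of this lemma; it is imported verbatim as \cite[Theorem~11.1]{ITT}, so the only meaningful comparison is with the proof in that reference. Your plan reconstructs exactly the route taken there: establish the tridiagonal (Askey--Wilson-type) relations $[A,D]=0$ and $[A^*,D^*]=0$ with a common $\beta$ (this is Theorem~10.1 of \cite{ITT}), then sandwich $D$ between primitive idempotents to extract $f(\th_i,\th_{i+1})=0$, subtract consecutive instances to get $\th_{i-1}-\beta\th_i+\th_{i+1}=\gamma$, and read off that both ratios in \eqref{eq:indep} equal $\beta+1$. All of the steps you actually carry out are correct: the computation $E_iDE_j=f(\th_i,\th_j)E_iA^*E_j$, the reduction of $[A,D]=0$ to the conditions at $|i-j|=1$ via Lemma~\ref{lem:trid}, the nonvanishing of $E_iA^*E_{i+1}$ from irreducibility, and the passage from the recurrence to the constancy of the ratios.

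The one genuine gap is the one you name yourself: the existence of common scalars $(\beta,\gamma,\varrho)$ making all $d$ equations $f(\th_i,\th_{i+1})=0$ hold simultaneously, together with the matching of $\beta$ between the primary and dual relations, is the entire content of the result, and your proposal does not supply it. Fixing $(\beta,\gamma,\varrho)$ from three consecutive constraints and then ``arguing by irreducibility that the rest are forced'' is not an argument; in \cite{ITT} the propagation is carried out by a genuinely different mechanism, namely the split decomposition $U_i=(E^*_0V+\cdots+E^*_iV)\cap(E_iV+\cdots+E_dV)$, on which $A-\th_iI$ acts as a raising map and $A^*-\th^*_iI$ as a lowering map; the tridiagonal relations (with one $\beta$ serving both) are then verified degree by degree on this decomposition. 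Without that (or an equivalent structural input), your plan establishes only that \emph{if} the Askey--Wilson relations hold then the lemma follows, which is the easy half. So: right strategy, correct downstream deductions, but the load-bearing step is deferred rather than proved.
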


\section{The algebra $T$}

\indent
In our proof of Theorem \ref{thm:main} we will make heavy use of 
the following algebra.

\medskip

\begin{definition}{\rm \cite[Definition 2.4]{NT:structure}}\label{def:T}
\samepage
Let $d$ denote a nonnegative integer. Let
$(\{\th_i\}_{i=0}^d$; $\{\th^*_i\}_{i=0}^d)$ denote a sequence of scalars
taken from $\K$ that satisfies \eqref{eq:distinct} and
Lemma {\rm \ref{lem:corec}}.
Let $T$ denote the associative $ \K$-algebra with $1$, defined by generators
$a$, $\{e_i\}_{i=0}^d$, $a^*$, $\{e^*_i\}_{i=0}^d$ and the following relations.
\begin{equation}                            \label{eq:eiej}
  e_ie_j=\delta_{i,j}e_i, \qquad 
  e^*_ie^*_j=\delta_{i,j}e^*_i \qquad\qquad 
  (0 \leq i,j \leq d),
\end{equation}
\begin{equation}                            \label{eq:sumei}
  1=\sum_{i=0}^d e_i, \qquad\qquad
  1=\sum_{i=0}^d e^*_i,
\end{equation}
\begin{equation}                                  \label{eq:sumthiei}
   a = \sum_{i=0}^d \theta_ie_i, \qquad\qquad
   a^* = \sum_{i=0}^d \theta^*_i e^*_i,
\end{equation}
\begin{equation}                                     \label{eq:esiakesj}
 e^*_i a^k e^*_j = 0  \qquad \text{if $\;k<|i-j|$}
                  \qquad\qquad (0 \leq i,j,k \leq d),
\end{equation}
\begin{equation}                                      \label{eq:eiaskej}
 e_i {a^*}^k e_j = 0  \qquad \text{if $\;k<|i-j|$} 
                  \qquad\qquad (0 \leq i,j,k \leq d). 
\end{equation}
We call $\{e_i\}_{i=0}^d$ and $\{e^*_i\}_{i=0}^d$ the 
{\it idempotent generators} for $T$.
\end{definition}

\medskip

The algebra $T$ is related to tridiagonal systems as follows.

\medskip

\begin{lemma}  {\rm \cite[Lemma 2.5]{NT:structure}} \label{lem:T-module}
Let $(A;\{E_i\}_{i=0}^d;A^*;\{E^*_i\}_{i=0}^d)$ denote a tridiagonal
system on $V$ with eigenvalue sequence
$\{\th_i\}_{i=0}^d$ and dual eigenvalue sequence $\{\th^*_i\}_{i=0}^d$.
Let $T$ denote the $\K$-algebra from Definition {\rm \ref{def:T}}
corresponding to 
$(\{\th_i\}_{i=0}^d; \{\th^*_i\}_{i=0}^d)$.
Then there exists a unique $T$-module structure on $V$ such that
$a$, $a^*$, $e_i$, $e^*_i$ acts on $V$ as
$A$, $A^*$, $E_i$, $E^*_i$, respectively.
Moreover this $T$-module is irreducible.
\end{lemma}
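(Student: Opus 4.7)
The plan is to define the $T$-module structure by specifying how each generator of $T$ acts on $V$, then verify that the defining relations of $T$ are respected, and finally deduce irreducibility from condition (iv) of Definition \ref{def:tdp}.

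First I would invoke the universal property of the presentation in Definition \ref{def:T}: since $T$ is defined by generators and relations, a $T$-module structure on $V$ is the same as a $\K$-algebra homomorphism from $T$ into $\text{End}(V)$, which in turn is determined by specifying images of the generators that satisfy the defining relations. So I would send $a \mapsto A$, $a^* \mapsto A^*$, $e_i \mapsto E_i$, $e^*_i \mapsto E^*_i$ for $0 \leq i \leq d$, and check each relation in Definition \ref{def:T}. Relations \eqref{eq:eiej}, \eqref{eq:sumei}, \eqref{eq:sumthiei} are exactly the standard spectral-decomposition identities for the primitive idempotents of the diagonalizable operators $A$ and $A^*$ (items (i)--(iv) recalled at the start of Section 2). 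Relations \eqref{eq:esiakesj} and \eqref{eq:eiaskej} are precisely Lemma \ref{lem:trid}(i),(ii). Hence the map extends to a $\K$-algebra homomorphism $T \to \text{End}(V)$, endowing $V$ with a $T$-module structure with the desired action of the generators.

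Uniqueness of the $T$-module structure is immediate: the actions of $a$, $a^*$, $\{e_i\}_{i=0}^d$, $\{e^*_i\}_{i=0}^d$ are prescribed, and these elements generate $T$ as a $\K$-algebra, so the action of every element of $T$ on $V$ is forced.

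For irreducibility, let $W \subseteq V$ be a nonzero $T$-submodule. Then $W$ is invariant under the actions of $a$ and $a^*$, which are $A$ and $A^*$. Thus $AW \subseteq W$ and $A^*W \subseteq W$ with $W \neq 0$, so condition (iv) of Definition \ref{def:tdp} forces $W = V$. Therefore the $T$-module $V$ is irreducible.

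The only step that requires genuine input beyond bookkeeping is the verification of \eqref{eq:esiakesj} and \eqref{eq:eiaskej}, but these have already been packaged as Lemma \ref{lem:trid}, so no obstacle remains; the proof is essentially a check that the presentation of $T$ was tailored to match the properties of a tridiagonal system.
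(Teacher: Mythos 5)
Your proof is correct; the paper itself gives no proof of this lemma, citing it from \cite[Lemma 2.5]{NT:structure}, and your argument is the standard one: the relations \eqref{eq:eiej}--\eqref{eq:sumthiei} are the spectral-decomposition identities recalled in Section 2, relations \eqref{eq:esiakesj}, \eqref{eq:eiaskej} are exactly Lemma \ref{lem:trid}, uniqueness holds because the generators generate $T$, and irreducibility is condition (iv) of Definition \ref{def:tdp}. Nothing is missing.
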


\begin{definition}     \samepage    \label{def:D}
With reference to Definition {\rm \ref{def:T}} we denote by
$D$ (resp. $D^*$)  the $ \K$-subalgebra
of $T$ generated by $a$ (resp. $a^*$).
\end{definition}

\begin{lemma} {\rm \cite[Lemma 4.2]{NT:muconj}} \label{lem:basisD} \samepage
With reference to Definition {\rm \ref{def:T}} the following
{\rm (i)}, {\rm (ii)} hold.
\begin{itemize}
\item[\rm (i)]
Each of $\{a^i\}_{i=0}^d$, $\{e_i\}_{i=0}^d$ is a basis for $D$.
\item[\rm (i)]
Each of $\{{a^*}^i\}_{i=0}^d$, $\{e^*_i\}_{i=0}^d$ is a basis for $D^*$.
\end{itemize}
\end{lemma}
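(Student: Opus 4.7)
The plan is to prove (i) in detail; part (ii) follows by the evident symmetry of the defining relations of $T$ under $a \leftrightarrow a^*$, $e_i \leftrightarrow e^*_i$.

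First I would show that each $e_i$ belongs to $D$. Using only the idempotent relations \eqref{eq:eiej}, the completeness relation \eqref{eq:sumei}, and the spectral expression \eqref{eq:sumthiei} for $a$, a direct computation (expanding the product using $e_j e_k = \delta_{j,k} e_j$ and the distinctness \eqref{eq:distinct}) yields the Lagrange-type formula
\[
 e_i \;=\; \prod_{\stackrel{0 \leq j \leq d}{j \neq i}} \frac{a - \theta_j}{\theta_i - \theta_j},
\]
so $e_i \in D$. Next, for any $n\ge 0$, iterating $a=\sum_k \theta_k e_k$ with $e_j e_k = \delta_{j,k}e_j$ gives $a^n = \sum_k \theta_k^n e_k$, so every polynomial in $a$ lies in $\mathrm{span}\{e_i\}_{i=0}^d$. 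Combined with Step 1, this yields $D = \mathrm{span}\{e_i\}_{i=0}^d$, and in particular $\dim D \leq d+1$.

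For linear independence, suppose $\sum_i c_i e_i = 0$ in $T$. Multiplying on the left by $e_j$ and using \eqref{eq:eiej} gives $c_j e_j = 0$, so it suffices to show $e_j \neq 0$ in $T$ for each $j$. Here I would invoke Lemma \ref{lem:T-module}: an actual tridiagonal system with the prescribed eigenvalue sequences $(\{\theta_i\}, \{\theta^*_i\})$ makes its underlying space $V$ into a $T$-module on which $e_j$ acts as the genuine primitive idempotent $E_j \neq 0$, hence $e_j \neq 0$ in $T$. This proves $\{e_i\}_{i=0}^d$ is a basis for $D$. Finally, the transition matrix from $\{e_i\}_{i=0}^d$ to $\{a^i\}_{i=0}^d$ inside $D$ is the Vandermonde matrix $[\theta_k^{\,n}]$, which is invertible by \eqref{eq:distinct}; hence $\{a^i\}_{i=0}^d$ is also a basis for $D$.

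The main obstacle is Step 3: ensuring $e_j \neq 0$ in the abstract algebra $T$. The cleanest route is the appeal to Lemma \ref{lem:T-module}, which requires that some tridiagonal system realize the given eigenvalue and dual eigenvalue sequences. If this realization were not directly available, I would instead build a concrete nonzero $T$-module on $\K^{d+1}$ by declaring $a$ diagonal with entries $\theta_0,\dots,\theta_d$, $e_i$ the corresponding projections, and $a^*$ an explicit tridiagonal matrix whose eigenvalues are $\theta^*_0,\dots,\theta^*_d$ (using the recurrence of Lemma \ref{lem:corec} to ensure compatibility), checking the relations \eqref{eq:esiakesj}, \eqref{eq:eiaskej} by hand. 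Either way, the spanning and Vandermonde-transfer portions of the argument are routine, and the substance of the lemma is concentrated in the nonvanishing of each $e_j$.
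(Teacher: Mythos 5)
The paper does not prove this lemma at all; it is quoted from \cite{NT:muconj}, so there is no internal proof to compare against. Your outline --- Lagrange interpolation to show $e_i\in D$, the identity $a^n=\sum_k\theta_k^{\,n}e_k$ to show the $e_i$ span $D$, orthogonality of idempotents to reduce linear independence to $e_j\neq 0$, and the Vandermonde transition matrix to pass to $\{a^i\}_{i=0}^d$ --- is the standard argument and is correct as far as it goes. You also correctly locate the only substantive point: the defining relations of $T$ do not visibly rule out $e_j=0$.

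However, neither of your two routes to $e_j\neq 0$ closes the gap for the lemma as stated. Definition \ref{def:T} takes an arbitrary sequence $(\{\theta_i\}_{i=0}^d;\{\theta^*_i\}_{i=0}^d)$ subject only to \eqref{eq:distinct} and Lemma \ref{lem:corec}; Lemma \ref{lem:T-module} can be invoked only if a tridiagonal system with exactly these eigenvalue sequences is already in hand, and no such existence statement is available here (which parameter sequences are realized by tridiagonal pairs over a given field is a nontrivial classification problem). Your fallback --- a diagonal $a$ together with an explicit tridiagonal $a^*$ for which \eqref{eq:esiakesj} also holds --- is essentially the construction of a tridiagonal pair with prescribed parameters, so it is not a ``check by hand'' and inherits the same existence issue. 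The fix is much cheaper: you need neither faithfulness nor irreducibility, only some $T$-module on which each $e_j$ acts nonzero. Take $V=\K^{d+1}$, let $e_j$ act as the $j$th coordinate projection, $a=\sum_j\theta_je_j$, and let $e^*_0$ act as the identity, $e^*_j=0$ for $1\leq j\leq d$, and $a^*=\theta^*_0\cdot 1$. Relations \eqref{eq:eiej}--\eqref{eq:sumthiei} are immediate, and \eqref{eq:esiakesj}, \eqref{eq:eiaskej} hold trivially because for $i\neq j$ at least one of $e^*_i,e^*_j$ acts as zero and $a^*$ acts as a scalar; yet every $e_j$ acts nonzero. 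With that substitution your proof is complete.
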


\medskip

The next result is about $D$; a similar result holds for $D^*$.

\medskip

\begin{lemma} {\rm \cite[Lemma 3.4]{NT:structure}} \label{lem:replace}  
\samepage
With reference to Definition {\rm \ref{def:T}} consider 
the following basis for $D$:
\begin{equation}   \label{eq:E0Ed}
  e_0,e_1,\ldots,e_d.
\end{equation}
For $0 \leq n \leq d$, if we replace any $(n+1)$-subset of \eqref{eq:E0Ed}
by $1,a,a^2, \ldots, a^n$ then the result is still a basis for $D$.
\end{lemma}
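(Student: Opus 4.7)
The plan is to use the fact that $D$ has dimension $d+1$ (Lemma \ref{lem:basisD}), so it suffices to show that the proposed replacement set, which has $d+1$ elements, is linearly independent in $D$. Let $J = \{i_0 < i_1 < \cdots < i_n\} \subseteq \{0,1,\ldots,d\}$ index the subset of $\{e_i\}_{i=0}^d$ to be removed, and let $J^c$ denote its complement. The proposed set is
\[
  S = \{1, a, a^2, \ldots, a^n\} \cup \{e_j : j \in J^c\}.
\]
The first step is to expand each element of $S$ in the basis $\{e_i\}_{i=0}^d$. From \eqref{eq:sumei} and \eqref{eq:sumthiei} together with $e_ie_j = \delta_{i,j}e_i$, one checks that $a^k = \sum_{i=0}^d \th_i^k\, e_i$ for every $k \geq 0$ (with $\th_i^0 = 1$ accounting for the element $1$).

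Next, I would form the $(d+1)\times(d+1)$ transition matrix $M$ expressing the elements of $S$ (in some fixed order) as columns in the basis $\{e_0, e_1, \ldots, e_d\}$. The columns coming from $1, a, \ldots, a^n$ have entries $(\th_i^k)_{i=0}^d$ for $k=0,1,\ldots,n$. The columns coming from $\{e_j : j \in J^c\}$ are standard basis vectors: each has a single $1$ in row $j \in J^c$ and zeros elsewhere. To compute $\det M$, perform Laplace expansion along the $d-n$ standard-basis columns, which picks out exactly the rows indexed by $J^c$. What remains is, up to sign, the determinant of the $(n+1)\times(n+1)$ submatrix on rows $J$ and the polynomial-in-$a$ columns, whose $(j,k)$-entry is $\th_{i_j}^k$ for $0 \leq j,k \leq n$. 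This is a Vandermonde matrix in the scalars $\th_{i_0}, \th_{i_1}, \ldots, \th_{i_n}$.

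Finally, I would invoke \eqref{eq:distinct} to conclude that these $n+1$ eigenvalues are pairwise distinct, so the Vandermonde determinant is nonzero; hence $\det M \neq 0$ and $S$ is a basis of $D$. I do not anticipate a real obstacle here; the only subtlety is setting up the transition matrix so that the standard-basis columns cleanly decouple the rows in $J^c$, reducing the computation to a Vandermonde determinant indexed precisely by the removed positions.
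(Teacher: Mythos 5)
Your proof is correct. The paper itself gives no proof of this lemma---it is quoted from \cite[Lemma 3.4]{NT:structure}---and your argument (expand $a^k=\sum_{i=0}^d\th_i^k e_i$, observe that the transition matrix reduces via the standard-basis columns to the Vandermonde matrix in the eigenvalues $\th_{i_0},\ldots,\th_{i_n}$ indexed by the removed subset, and invoke \eqref{eq:distinct}) is exactly the standard argument used in that reference; there are no gaps.
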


\section{The space $D^* \ot D \ot D^*$}
\label{sec:DDD}

\indent
Throughout this section the following notation will be in effect.
Fix a nonnegative integer $d$. 
Fix a sequence $(\{\th_i\}_{i=0}^d;\{\th^*_i\}_{i=0}^d)$  of scalars 
taken from $\K$ that satisfies \eqref{eq:distinct} and Lemma \ref{lem:corec}.
Let $T$ denote the corresponding algebra from Definition \ref{def:T}.

In our investigation of $T$ we will initially study the linear dependencies 
among elements of the form
  $e^*_s e_i e^*_t $ $(0 \leq s,i,t \leq d)$.
In order to do this in a systematic way, 
we will analyze the vector space $D^* \ot D \ot D^*$,
where $\ot$ means $\ot_{\K}$. 

\medskip

\begin{definition}    \label{def:R3}
Let $R=R^{(3)}$ denote the subspace of $D^* \ot D \ot D^*$ spanned by
\begin{equation}        \label{eq:R1}
 \{e^*_s \ot a^k \ot e^*_t
      \;|\; 0 \leq s,t,k \leq d,\; k < |s-t| \}.
\end{equation}
We note that the elements \eqref{eq:R1} form a basis for $R$.
\end{definition}

\medskip

We mention one significance of the space $R$.

\medskip

\begin{lemma}    \label{lem:piR1}   \samepage
Consider the $\K$-linear transformation
\[
 \begin{array}{ccc}
    D^* \ot D \ot D^* & \qquad \to \qquad & T \\
    X \ot Y \ot Z  & \qquad \mapsto \qquad & XYZ
 \end{array}
\]
Then the kernel of this map contains the space $R$ from 
Definition {\rm \ref{def:R3}}.
\end{lemma}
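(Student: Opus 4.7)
The proof is essentially immediate from the defining relations of $T$. The plan is to observe that the multiplication map sends a spanning element $e^*_s \otimes a^k \otimes e^*_t$ of $R$ (with $k < |s-t|$) to $e^*_s a^k e^*_t \in T$, and then invoke relation \eqref{eq:esiakesj} from Definition \ref{def:T} to conclude that this product vanishes in $T$. By $\K$-linearity, every element of $R$ lies in the kernel of the multiplication map.

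More concretely, I would first note that the multiplication map $X \otimes Y \otimes Z \mapsto XYZ$ is well-defined and $\K$-linear since $T$ is an associative $\K$-algebra and the map is $\K$-trilinear on $D^* \times D \times D^*$. Next, I would pick an arbitrary basis element of $R$ from the list \eqref{eq:R1}, namely $e^*_s \otimes a^k \otimes e^*_t$ with $0 \leq s,t,k \leq d$ and $k < |s-t|$, and verify that its image $e^*_s a^k e^*_t$ is zero in $T$ by relation \eqref{eq:esiakesj}. Since these basis elements span $R$, the containment $R \subseteq \Ker$ of the multiplication map follows.

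There is no real obstacle here; the lemma is a direct translation of one of the defining relations of $T$ into the tensor-product setting. The only thing worth being careful about is that the elements listed in \eqref{eq:R1} actually form a basis of $R$ (which is noted in Definition \ref{def:R3} and follows because, by Lemma \ref{lem:basisD}, $\{e^*_s\}_{s=0}^d$ and $\{a^k\}_{k=0}^d$ are bases of $D^*$ and $D$ respectively, so $\{e^*_s \otimes a^k \otimes e^*_t\}$ is a basis of $D^* \otimes D \otimes D^*$, and the subset with $k < |s-t|$ is therefore linearly independent).
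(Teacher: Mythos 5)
Your proposal is correct and matches the paper's proof, which simply cites relation \eqref{eq:esiakesj}; you have just spelled out the same one-line argument (spanning elements map to $e^*_s a^k e^*_t = 0$, then use linearity) in more detail.
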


\begin{proof}
Follows from \eqref{eq:esiakesj}.
\end{proof}

\medskip

We now define some elements of $D^* \ot D \ot D^*$ said to be {\em zigzag}.
We will show that the zigzag elements in $D^* \ot D \ot D^*$ form a 
basis for a complement of $R$ in $D^* \ot D \ot D^*$.

\medskip

\begin{definition}    \label{def:between}     \samepage
For integers $i,r,j$ we say $r$ is {\em between} the ordered pair $i,j$
whenever $i \geq r>j$ or $i \leq r<j$.
\end{definition}

\begin{definition}   \label{def:zigzagelement1}      \samepage
An element of $D^* \ot D \ot D^*$ is said to be {\em zigzag} whenever it has 
the form $e^*_s \ot e_i \ot e^*_t$ $(0 \leq s,i,t\leq d)$ and $i$ is 
not between $s,t$.
\end{definition}

\begin{theorem}   \label{thm:basisDDD}  \samepage
The following elements {\rm (i)}, {\rm (ii)} together form a basis
for $D^* \ot D \ot D^*$.
\begin{itemize}
\item[\rm (i)]
the elements \eqref{eq:R1};
\item[\rm (ii)]
the zigzag elements in $D^* \ot D \ot D^*$.
\end{itemize}
\end{theorem}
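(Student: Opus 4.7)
The plan is to decompose the triple tensor product into $(d+1)^2$ one-dimensional (in the outer factors) summands indexed by pairs $(s,t)$, then apply Lemma \ref{lem:replace} inside each summand to produce a basis of $D$ whose image tensored with $e^*_s$ and $e^*_t$ gives exactly the elements from (i) and (ii) sitting in that summand. The key observation is that the ``betweenness'' condition in Definition \ref{def:between} has been engineered so that the $i$'s that are between $s,t$ match up with the subset of $\{e_0,\ldots,e_d\}$ to be replaced by initial powers of $a$.

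First I would invoke Lemma \ref{lem:basisD} to write $D^* = \bigoplus_{s=0}^d \K e^*_s$, whence
\[
 D^* \ot D \ot D^* \;=\; \bigoplus_{s=0}^d \bigoplus_{t=0}^d \; \K e^*_s \ot D \ot \K e^*_t.
\]
Since each summand has dimension $\dim D = d+1$, it suffices to show that for every ordered pair $(s,t)$, the elements of (i) and (ii) lying in $\K e^*_s \ot D \ot \K e^*_t$ together form a basis of that summand.

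Fix $(s,t)$ and set $n=|s-t|$. Let $B_{s,t}$ denote the set of integers $i$ with $0 \leq i \leq d$ that are between $s,t$ in the sense of Definition \ref{def:between}; a direct inspection of that definition shows $|B_{s,t}|=n$. If $n=0$, then $B_{s,t}$ is empty, no element of (i) contributes to the summand, and the contribution from (ii) is $\{e^*_s \ot e_i \ot e^*_t : 0 \leq i \leq d\}$, which is a basis of $\K e^*_s \ot D \ot \K e^*_t$ because $\{e_i\}_{i=0}^d$ is a basis of $D$ by Lemma \ref{lem:basisD}. If $n \geq 1$, apply Lemma \ref{lem:replace} with its parameter equal to $n-1$ (so $n+1$ in the lemma equals our $n$), replacing the $n$-subset $\{e_i : i \in B_{s,t}\}$ of $\{e_0,\ldots,e_d\}$ by $\{1,a,a^2,\ldots,a^{n-1}\}$. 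The lemma asserts that the resulting set is still a basis of $D$. Tensoring this basis on the left with $e^*_s$ and on the right with $e^*_t$ yields a basis of $\K e^*_s \ot D \ot \K e^*_t$ consisting of
\[
 \{e^*_s \ot a^k \ot e^*_t : 0 \leq k < |s-t|\} \;\cup\; \{e^*_s \ot e_i \ot e^*_t : i \notin B_{s,t}\},
\]
which is precisely the intersection of (i)$\cup$(ii) with that summand.

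Taking the union over all $(s,t)$ produces a basis of $D^* \ot D \ot D^*$ equal to the disjoint union of (i) and (ii). Since the construction uses Lemma \ref{lem:replace} as a black box, there is no genuine obstacle; the main point is only to notice that ``$i$ is between $s,t$'' describes a subset of $\{0,\ldots,d\}$ of the correct cardinality $|s-t|$ to apply Lemma \ref{lem:replace}, with the complementary ``$i$ not between $s,t$'' indexing the surviving $e_i$'s, i.e., the zigzag portion.
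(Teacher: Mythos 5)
Your proposal is correct and follows essentially the same route as the paper: decompose $D^* \ot D \ot D^*$ into the summands $e^*_s \ot D \ot e^*_t$ and apply Lemma \ref{lem:replace} in each summand, noting that the indices $i$ between $s,t$ form a set of cardinality $|s-t|$ whose replacement by $1,a,\ldots,a^{|s-t|-1}$ yields exactly the elements of (i) and (ii) in that summand. Your explicit treatment of the case $s=t$ and the cardinality check are fine but are just details the paper leaves implicit.
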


\begin{proof}
Observe
\[
   D^* \ot D \ot D^* = \sum_{s=0}^d \sum_{t=0}^d e^*_s \ot D \ot e^*_t
     \qquad\qquad \text{(direct sum)}.
\]
Lemma \ref{lem:replace} implies that for $0 \leq s,t\leq d$ the following two
sets together form a basis for $D$:
\[
   \{a^k \,|\, 0 \leq k \leq d, \; k <|s-t|\},
\]
\[
   \{e_i \,|\, 0 \leq i \leq d, \; \text{ $i$ is not between $s,t$}\}.
\]
Therefore the following two sets together form
a basis for $e^*_s \ot D \ot e^*_t$:
\[
   \{e^*_s \ot a^k \ot e^*_t \,|\, 0 \leq k \leq d,\; k<|s-t|\},
\]
\[
   \{e^*_s \ot e_i \ot e^*_t \,|\, 0 \leq i \leq d,\; 
                 \text{ $e^*_s \ot e_i \ot e^*_t$ is zigzag} \}.
\]
The result follows.
\end{proof}

\begin{corollary}   \label{cor:zigzagDDD}  \samepage
The zigzag elements in $D^* \ot D \ot D^*$ form a basis for a complement 
of $R$ in $D^* \ot D \ot D^*$.
\end{corollary}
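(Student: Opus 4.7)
The statement is an immediate corollary of Theorem \ref{thm:basisDDD}, so my plan is a short bookkeeping argument rather than any substantive new calculation.

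First I would recall, directly from Definition \ref{def:R3}, that the elements listed in \eqref{eq:R1} form a basis for $R$ (this is part of the definition). Next I would invoke Theorem \ref{thm:basisDDD}, which asserts that the elements of \eqref{eq:R1} together with the zigzag elements in $D^* \ot D \ot D^*$ constitute a basis for the whole space $D^* \ot D \ot D^*$. Combining these two facts, the zigzag elements are linearly independent (as part of a basis) and their span $Z$ satisfies $R + Z = D^* \ot D \ot D^*$ together with $R \cap Z = 0$ (since a basis of $D^* \ot D \ot D^*$ is partitioned into a basis of $R$ and the set of zigzag elements). This is exactly the statement that the zigzag elements form a basis for a complement of $R$.

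There is no real obstacle: the work was already done in proving Theorem \ref{thm:basisDDD}, which in turn rested on Lemma \ref{lem:replace} applied in each summand $e^*_s \ot D \ot e^*_t$. The only thing one must be careful about is to explicitly note that the elements of \eqref{eq:R1} are not merely a spanning set for $R$ but actually a basis, so that the decomposition of the basis in Theorem \ref{thm:basisDDD} really does split $D^* \ot D \ot D^*$ as an internal direct sum $R \oplus Z$. Once that observation is in place, the conclusion is immediate.
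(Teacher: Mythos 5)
Your argument is correct and is exactly the intended one: the paper states this as a corollary with no written proof precisely because it follows immediately from Theorem \ref{thm:basisDDD} together with the remark in Definition \ref{def:R3} that the elements \eqref{eq:R1} form a basis (not just a spanning set) for $R$. Your explicit note about that last point is the right thing to be careful about, and nothing further is needed.
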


\section{The space $D^* \ot D \ot D^* \ot D$}
\label{sec:R}

\indent
For this section and the next, the following notation will be in effect.
Fix a nonnegative integer $d$. 
Fix a sequence $(\{\th_i\}_{i=0}^d;\{\th^*_i\}_{i=0}^d)$  of scalars 
taken from $\K$ that satisfies \eqref{eq:distinct} and Lemma \ref{lem:corec}.
Let $T$ denote the corresponding algebra from Definition \ref{def:T}.

In Section \ref{sec:DDD} we used $D^* \ot D \ot D^*$ to describe some linear 
dependencies among elements of $T$ of the form $e^*_s e_i e^*_t$ 
$(0 \leq s,i,t\leq d)$.
An analysis of $D \ot D^* \ot D$ yields similar dependencies among elements 
of $T$ of the form $e_s e^*_i e_t$ $(0 \leq s,i,t\leq d)$.
We now investigate linear dependencies among elements of $T$ of the 
form $e^*_s e_i e^*_j e_t$ $(0 \leq s,i,j,t\leq d)$.
To this end we will analyze the vector space $D^* \ot D \ot D^*\ot D$. 
This analysis will take up to the end of Section \ref{sec:ZZDDDD}.

\medskip

\begin{definition}        \label{def:R}      \samepage
Let $R=R^{(4)}$ denote the subspace of $D^* \ot D \ot D^* \ot D$
spanned by the union of the following two subsets:
\begin{equation}        \label{eq:Rright}
  \{e^*_s \ot e_i \ot {a^*}^k \ot e_t 
      \;|\; 0 \leq s,t,i,k \leq d,\; k <|i-t| \},
\end{equation}
\begin{equation}        \label{eq:Rleft}
 \{e^*_s \ot a^k \ot e^*_j \ot e_t 
      \;|\; 0 \leq s,t,j,k \leq d,\; k < |s-j| \}.
\end{equation}
\end{definition}

\medskip

We mention one significance of the space $R$.

\medskip

\begin{lemma}    \label{lem:piR}   \samepage
Consider the $\K$-linear transformation
\[
 \begin{array}{ccc}
    D^* \ot D \ot D^* \ot D & \qquad \to \qquad & T \\
    X \ot Y \ot Z \ot W  & \qquad \mapsto \qquad 
         & XYZW
 \end{array}
\]
Then the kernel of this map contains the space $R$ from Definition 
{\em \ref{def:R}}.
\end{lemma}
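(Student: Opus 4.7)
The plan is essentially to observe that both spanning families of $R$ consist of pure tensors whose product is killed by one of the two defining relations \eqref{eq:esiakesj}, \eqref{eq:eiaskej} of the algebra $T$. So the argument parallels the proof of Lemma \ref{lem:piR1}, just with one extra tensor factor on the right.

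First I would handle the spanning set \eqref{eq:Rleft}. A generator $e^*_s \ot a^k \ot e^*_j \ot e_t$ with $k<|s-j|$ maps to $e^*_s a^k e^*_j e_t$ in $T$. By relation \eqref{eq:esiakesj} applied to the indices $s,j,k$ we have $e^*_s a^k e^*_j = 0$, and multiplying on the right by $e_t$ gives $0$.

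Next I would handle the spanning set \eqref{eq:Rright}. A generator $e^*_s \ot e_i \ot {a^*}^k \ot e_t$ with $k<|i-t|$ maps to $e^*_s e_i {a^*}^k e_t$ in $T$. By relation \eqref{eq:eiaskej} applied to the indices $i,t,k$ we have $e_i {a^*}^k e_t = 0$, and multiplying on the left by $e^*_s$ gives $0$.

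Since every element of $R$ is a $\K$-linear combination of these two families of generators, the multiplication map sends $R$ into $0$, so $R$ is contained in the kernel. There is no real obstacle here; the only thing to be careful about is making sure the indices in the defining relations line up correctly with the indices appearing in \eqref{eq:Rright} and \eqref{eq:Rleft}, but this is immediate from Definition \ref{def:R}.
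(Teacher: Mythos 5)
Your proposal is correct and matches the paper's proof, which simply cites relations \eqref{eq:esiakesj} and \eqref{eq:eiaskej}; you have just written out the index bookkeeping explicitly. No issues.
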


\begin{proof}
Follows from \eqref{eq:esiakesj} and \eqref{eq:eiaskej}.
\end{proof}

\medskip

It will be convenient to decompose 
$D^* \ot D \ot D^* \ot D$ as follows.

\medskip

\begin{lemma}   \label{lem:directsum}   \samepage
We have
\begin{equation}    \label{eq:essDDset}
 D^* \ot D \ot D^* \ot D
 = \sum_{s=0}^d \sum_{t=0}^d e^*_s \ot D \ot D^* \ot e_t
   \qquad\qquad \text{\rm (direct sum)}.
\end{equation}
\end{lemma}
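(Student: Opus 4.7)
The plan is to reduce the identity to a routine fact about tensor products of direct-sum decompositions. The key input is Lemma \ref{lem:basisD}, which provides the bases $\{e^*_s\}_{s=0}^d$ of $D^*$ and $\{e_t\}_{t=0}^d$ of $D$. These bases give direct-sum decompositions $D^* = \bigoplus_{s=0}^d \K e^*_s$ and $D = \bigoplus_{t=0}^d \K e_t$.

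Next, I would invoke the general principle that tensor product distributes over direct sums: for vector spaces $U, U'$ and families $\{V_s\}$, $\{W_t\}$ with $V = \bigoplus_s V_s$ and $W = \bigoplus_t W_t$, we have
\[
V \ot U \ot U' \ot W = \bigoplus_{s,t} V_s \ot U \ot U' \ot W_t.
\]
Applying this with $V=D^*$, $W=D$, $V_s = \K e^*_s$, $W_t = \K e_t$, $U = D$, $U' = D^*$, we obtain
\[
D^* \ot D \ot D^* \ot D = \bigoplus_{s=0}^d \bigoplus_{t=0}^d \K e^*_s \ot D \ot D^* \ot \K e_t.
\]
Finally, I would observe that $\K e^*_s \ot D \ot D^* \ot \K e_t = e^*_s \ot D \ot D^* \ot e_t$, since scalars slide across the tensor symbols. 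This yields \eqref{eq:essDDset} as a direct sum.

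There is no substantive obstacle: the statement is essentially bookkeeping. The only mild point worth spelling out is the verification that the summands $e^*_s \ot D \ot D^* \ot e_t$ are linearly independent in the sense required for the sum to be direct, which follows immediately from the linear independence of the $e^*_s$ in $D^*$ and the $e_t$ in $D$ (Lemma \ref{lem:basisD}) together with the bilinearity of $\ot$.
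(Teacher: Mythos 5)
Your argument is correct. The paper gives no proof of this lemma, treating it as immediate; your reasoning — that $\{e^*_s\}_{s=0}^d$ and $\{e_t\}_{t=0}^d$ are bases of $D^*$ and $D$ by Lemma \ref{lem:basisD}, so the outer factors decompose as $\bigoplus_s \K e^*_s$ and $\bigoplus_t \K e_t$, and tensor product distributes over direct sums — is exactly the standard justification the authors have in mind.
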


\begin{definition}         \label{def:Rst}   \samepage
With reference to Definition {\rm \ref{def:R}},
for $0 \leq s,t \leq d$ let $R_{s,t}$ denote the subspace of $R$
spanned by the union of the following two subsets:
\begin{equation}         \label{eq:Rstright}
  \{e^*_s \ot e_i \ot {a^*}^k \ot e_t \;|\;  0 \leq i,k \leq d,\; k<|i-t|\},
\end{equation}
\begin{equation}         \label{eq:Rstleft}
  \{e^*_s \ot a^k \ot e^*_j \ot e_t \;|\; 0 \leq j,k \leq d,\; k<|s-j|\}.
\end{equation}
\end{definition}

\begin{lemma}        \label{lem:sumRst}    \samepage
With reference to Definition {\rm \ref{def:Rst}},
\begin{equation}
 R = \sum_{s=0}^d \sum_{t=0}^d R_{s,t}  \qquad\qquad \text{\rm (direct sum)}.
\end{equation}
\end{lemma}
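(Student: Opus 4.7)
The plan is to reduce the claim to the direct sum decomposition in Lemma \ref{lem:directsum}. First I would observe that each generator appearing in \eqref{eq:Rstright} or \eqref{eq:Rstleft} is of the form $e^*_s \ot (\,\cdot\,) \ot (\,\cdot\,) \ot e_t$, so by construction
\[
 R_{s,t} \subseteq e^*_s \ot D \ot D^* \ot e_t \qquad (0 \leq s,t \leq d).
\]
Since the subspaces $\{e^*_s \ot D \ot D^* \ot e_t\}_{s,t}$ are in direct sum position inside $D^* \ot D \ot D^* \ot D$ by Lemma \ref{lem:directsum}, the sum $\sum_{s,t} R_{s,t}$ is automatically a direct sum. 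This takes care of the directness assertion without any further work.

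It remains to show $R = \sum_{s,t} R_{s,t}$. The inclusion $\sum_{s,t} R_{s,t} \subseteq R$ is immediate from the fact that each $R_{s,t}$ is defined as a subspace of $R$. For the reverse inclusion, I would invoke the spanning set of $R$ given in Definition \ref{def:R}: every spanning element has the form $e^*_s \ot e_i \ot {a^*}^k \ot e_t$ (with $k < |i-t|$) or $e^*_s \ot a^k \ot e^*_j \ot e_t$ (with $k < |s-j|$). For the given values of $s$ and $t$, such an element lies in $R_{s,t}$ by the very definition of $R_{s,t}$. Hence each spanning element of $R$ lies in $\sum_{s,t} R_{s,t}$, which gives the desired inclusion.

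I do not anticipate any obstacles; the statement is essentially a bookkeeping consequence of the direct sum decomposition \eqref{eq:essDDset} together with the way the spanning sets of $R$ and $R_{s,t}$ have been organized by their leftmost and rightmost tensor factors.
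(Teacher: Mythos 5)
Your proof is correct and matches the paper's intent: the paper states this lemma without proof, treating it as an immediate bookkeeping consequence of Lemma \ref{lem:directsum} and the way the spanning sets \eqref{eq:Rstright}, \eqref{eq:Rstleft} partition \eqref{eq:Rright}, \eqref{eq:Rleft} by the indices $s,t$, which is exactly the argument you give.
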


\begin{lemma}       \label{lem:intersection}       \samepage
With reference to Definition {\rm \ref{def:Rst}},
for $0 \leq s,t \leq d$ the space $R_{s,t}$ is equal to the intersection
of $R$ and $e^*_s \ot D \ot D^* \ot e_t$.
\end{lemma}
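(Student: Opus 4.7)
The plan is to establish the two inclusions separately, with the nontrivial direction leveraging Lemma \ref{lem:sumRst} together with the direct sum decomposition in Lemma \ref{lem:directsum}.

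First I would verify the easy inclusion $R_{s,t} \subseteq R \cap (e^*_s \ot D \ot D^* \ot e_t)$. By Definition \ref{def:Rst} we have $R_{s,t} \subseteq R$ by construction. Moreover, every spanning element of $R_{s,t}$ listed in \eqref{eq:Rstright} and \eqref{eq:Rstleft} has the outer tensor factors $e^*_s$ (on the far left) and $e_t$ (on the far right), with the two middle factors lying in $D$ and $D^*$ respectively. Hence each such spanner lies in $e^*_s \ot D \ot D^* \ot e_t$, and the inclusion $R_{s,t} \subseteq e^*_s \ot D \ot D^* \ot e_t$ follows.

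For the reverse inclusion, let $v \in R \cap (e^*_s \ot D \ot D^* \ot e_t)$. By Lemma \ref{lem:sumRst} we may write $v = \sum_{s',t'} v_{s',t'}$ with $v_{s',t'} \in R_{s',t'}$, and this decomposition is unique. By the previous paragraph, $v_{s',t'} \in e^*_{s'} \ot D \ot D^* \ot e_{t'}$ for all $s',t'$. On the other hand, by hypothesis $v \in e^*_s \ot D \ot D^* \ot e_t$, so the expression $v = \sum_{s',t'} v_{s',t'}$ provides two representations of $v$ relative to the direct sum decomposition of Lemma \ref{lem:directsum}. Uniqueness of that decomposition forces $v_{s',t'} = 0$ whenever $(s',t') \neq (s,t)$, and therefore $v = v_{s,t} \in R_{s,t}$.

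There is essentially no obstacle here: the argument is purely bookkeeping, combining the directness of the two sum decompositions already in hand (Lemmas \ref{lem:directsum} and \ref{lem:sumRst}) with the obvious observation that each spanner of $R_{s,t}$ lives in the intended summand $e^*_s \ot D \ot D^* \ot e_t$. The only small thing to be careful about is to cite Lemma \ref{lem:sumRst} as giving a \emph{direct} sum (not just a sum), since the uniqueness of the $v_{s',t'}$ is not logically needed for this particular proof, but it is the directness of Lemma \ref{lem:directsum} that does the actual work of separating the components.
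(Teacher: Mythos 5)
Your argument is correct, and it is essentially the intended one: the paper states Lemma \ref{lem:intersection} without proof, treating it as a routine consequence of the fact that each spanning element of $R$ lies in a single summand $e^*_{s'} \ot D \ot D^* \ot e_{t'}$ of the decomposition in Lemma \ref{lem:directsum}. Your bookkeeping with the two decompositions supplies exactly the details the paper omits, and your closing remark is accurate — only the sum statement of Lemma \ref{lem:sumRst} is needed, while the directness of Lemma \ref{lem:directsum} does the real work.
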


\begin{lemma}   \label{lem:sizeRst}   \samepage
The following {\rm (i)}, {\rm (ii)} hold.
\begin{itemize}
\item[\rm (i)]
The number of elements in \eqref{eq:Rstright} is 
$d(d+1)/2-t(d-t)$.
\item[\rm (ii)]
The number of elements in \eqref{eq:Rstleft} is 
$d(d+1)/2-s(d-s)$.
\end{itemize}
\end{lemma}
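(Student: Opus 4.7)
The proof is a direct counting argument. I will treat (i) in detail; part (ii) will follow by a symmetric computation, swapping the roles of $s$ and $t$ and of $i$ and $j$.

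For (i), I count the pairs $(i,k)$ with $0 \leq i,k \leq d$ and $k < |i-t|$. First I would observe that the constraint $k \leq d$ is automatic: since $0 \leq i,t \leq d$ we have $|i-t| \leq d$, so any $k$ with $k < |i-t|$ automatically satisfies $k \leq d-1 \leq d$. Therefore, for each fixed $i$ with $0 \leq i \leq d$, the admissible values of $k$ are exactly $0,1,\ldots,|i-t|-1$, contributing $|i-t|$ elements. The total count is then $\sum_{i=0}^d |i-t|$.

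I would split this sum at $i=t$ and reindex each piece as an arithmetic series:
\[
\sum_{i=0}^d |i-t| \;=\; \sum_{i=0}^{t-1}(t-i) + \sum_{i=t+1}^{d}(i-t)
\;=\; \sum_{j=1}^{t} j + \sum_{j=1}^{d-t} j
\;=\; \frac{t(t+1)}{2} + \frac{(d-t)(d-t+1)}{2}.
\]
A routine expansion of the right-hand side gives $t^2 - dt + \tfrac{d(d+1)}{2} = \tfrac{d(d+1)}{2} - t(d-t)$, which proves (i).

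For (ii), the set \eqref{eq:Rstleft} is parametrized by pairs $(j,k)$ with $0 \leq j,k \leq d$ and $k < |s-j|$, so the identical argument (with $s$ in place of $t$ and $j$ in place of $i$) yields the count $\tfrac{d(d+1)}{2} - s(d-s)$. There is no real obstacle here; the only thing to be careful about is the automatic bound $k \leq d$, which I noted above, so that no terms in the sum are artificially truncated.
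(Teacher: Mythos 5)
Your counting argument is correct: for each $i$ the admissible $k$ are exactly $0,\ldots,|i-t|-1$ (the bound $k\leq d$ being automatic, as you note), and $\sum_{i=0}^d|i-t|=t(t+1)/2+(d-t)(d-t+1)/2=d(d+1)/2-t(d-t)$, with (ii) following by symmetry. The paper states this lemma without proof as a routine count, and your computation is precisely the intended one.
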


\section{The zigzag elements of $D^* \ot D \ot D^* \ot D$}
\label{sec:ZZDDDD}

\indent
In this section we define some elements of $D^* \ot D \ot D^* \ot D$ 
said to be {\em zigzag}. 
We will prove that \eqref{eq:Rright}, \eqref{eq:Rleft}  together form 
a basis for $R$, 
and that the zigzag elements form a basis for a complement of R in 
$D^* \ot D \ot D^* \ot D$.

\medskip

\begin{definition}   \label{def:zigzagelement}      \samepage
An element of $D^* \ot D \ot D^* \ot D$ is said to be {\em zigzag} whenever 
it has the form $e^*_s \ot e_i \ot e^*_j \ot e_t$ $(0 \leq s,i,j,t\leq d)$
and the following (i)--(iii) hold.
\begin{itemize}
\item[(i)] $i$ is not between $s,j$.
\item[(ii)] $j$ is not between $i,t$.
\item[(iii)] At least one of $i,j$ is not between $s,t$.
\end{itemize}
We say $e^*_s \ot e_i \ot e^*_j \ot e_t$
is {\em nonzigzag} whenever it is not zigzag.
\end{definition}

\begin{lemma}        \label{lem:ZZ} 
For $0 \leq s,t,i,j \leq d$,
the element $e^*_s \ot e_i \ot e^*_j \ot e_t$
is zigzag if and only if at least one of the following conditions holds.
\[
\begin{array} {c|c}
 \text{\rm case} & \text{\rm conditions}
\\ \hline \hline
 s<t & 0 \leq i<s  \quad  \text{and} \quad  t \leq j \leq d
\\
     &  s<i<t  \quad  \text{and} \quad  0 \leq j <s
\\
     & t \leq i \leq d \quad  \text{and} \quad  0 \leq j \leq t
\\ \hline
 s=t & 0 \leq i<s  \quad \text{and} \quad  s \leq j \leq d
\\
     & i = s  \quad \text{and} \quad  j=s
\\
     &   s < i \leq d \quad  \text{and} \quad  0 \leq j \leq s
\\ \hline
 s>t & 0 \leq i \leq t \quad  \text{and} \quad  t \leq j \leq d
\\
     & t < i < s  \quad  \text{and} \quad  s < j \leq d
\\
     & s < i \leq d  \quad  \text{and} \quad  0 \leq j \leq t
\end{array}
\]
\end{lemma}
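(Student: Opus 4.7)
The lemma is a bookkeeping translation of Definitions \ref{def:between} and \ref{def:zigzagelement} into the displayed table, so the approach is a direct case analysis.

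First, I would unpack the betweenness relation into a more usable form. From Definition \ref{def:between}, for integers $p,q,r$ the statement ``$r$ is between the ordered pair $p,q$'' is equivalent to: $p\ne q$, $\min(p,q)\le r\le\max(p,q)$, and $r\ne q$. In particular, ``between $p,q$'' is vacuous when $p=q$, and otherwise picks out the interval that is closed at $p$ and open at $q$. Using this, conditions (i)--(iii) of Definition \ref{def:zigzagelement} translate into explicit inequalities on $(s,i,j,t)$.

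Next I would split into the three cases $s<t$, $s=t$, $s>t$ of the table. Within each case I would partition the values of $i$ and of $j$ into the natural zones cut out by $s$ and $t$: when $s<t$ these are $[0,s)$, $\{s\}$, $(s,t)$, $[t,d]$; when $s=t$ they are $[0,s)$, $\{s\}$, $(s,d]$; when $s>t$ they are $[0,t]$, $(t,s)$, $\{s\}$, $(s,d]$. This gives at most a $4\times 4$ grid of subcases $(i,j)$, and in each cell one simply checks (i) ``$i$ not between $s,j$'', (ii) ``$j$ not between $i,t$'', and (iii) ``at least one of $i,j$ not between $s,t$'' against the reformulation above.

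The cells that pass all three conditions will consolidate exactly into the rows of the table, while every other cell fails at least one condition. A representative check: when $s<t$ and $i<s$, $j\ge t$, (i) holds because $s<j$ and $i<s$, (ii) holds because $i<t$ and $j\ge t$, and (iii) holds because $i\notin[s,t)$; on the other hand, when $s<t$ and $s\le i<t$, $s\le j<t$, condition (iii) fails because both $i,j$ are between $s,t$. The $s=t$ case is mildly special: since nothing is between $s$ and $s$, condition (iii) is automatic, which is precisely why the row $i=s$, $j=s$ appears. The $s>t$ case proceeds along the same lines with ``$\le$'' and ``$\ge$'' interchanged.

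The main obstacle is purely notational: at the boundary values $i,j\in\{s,t\}$ the asymmetric definition of ``between'' (closed at the first argument, open at the second) introduces several off-by-one traps. Once these are handled uniformly via the reformulation in the first step, the remaining verifications in each cell of the grid are mechanical.
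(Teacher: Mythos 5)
The paper states Lemma \ref{lem:ZZ} without proof, treating it as a routine consequence of Definitions \ref{def:between} and \ref{def:zigzagelement}, and your direct case analysis is exactly that intended verification; in particular your reformulation of ``between'' (closed at the first argument, open at the second, empty when the two arguments coincide) is correct. The one small refinement needed is that the zones for $i$ and for $j$ should each isolate both singletons $\{s\}$ and $\{t\}$ rather than only $\{s\}$ --- e.g.\ for $s<t$ and $i\geq t$ the element is zigzag precisely when $j\leq t$, so $j=t$ must be separated from $(t,d]$ to make each cell of the grid constant --- but this falls squarely under the boundary issues you already flag and handle via your reformulation.
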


\begin{example}  \label{exam:1}
Assume $d=9$.
We display the zigzag elements of 
$e^*_s \ot D \ot D^* \ot e_t$ for several values of $s,t$. 
For each value of $s,t$ a table is given.
For each table and for $0 \leq i,j\leq d$,
the $(i,j)$-entry is `z' if
and only if the element $e^*_s \ot e_i \ot e^*_j \ot e_t$ is zigzag.
\begin{itemize}
\item[(i)]
Assume $s=3$ and $t=6$.
\begin{center}
\begin{tabular}{cc|cccccccccc}
  \multicolumn{5}{c}{}      & $s$ &     &     & $t$
\\
    &    & $0$ & $1$ & $2$ & $3$ & $4$ & $5$ & $6$ & $7$ & $8$ & $9$
\\ \cline{2-12}
    & $0$ &  &  &  &  &  &  & z & z & z & z
\\
    & $1$ &  &  &  &  &  &  & z & z & z & z
\\
    & $2$ &  &  &  &  &  &  & z & z & z & z
\\
$s$ & $3$ 
\\
    & $4$ & z & z & z
\\
    & $5$ & z & z & z
\\
$t$ & $6$ & z & z & z & z & z & z & z
\\
    & $7$ & z & z & z & z & z & z & z
\\
    & $8$ & z & z & z & z & z & z & z
\\
    & $9$ & z & z & z & z & z & z & z
\end{tabular}
\end{center}
\item[(ii)] 
Assume $s=4$ and $t=4$.
\begin{center}
\begin{tabular}{cc|cccccccccc}
    \multicolumn{6}{c}{}          & $s$
\\    
    &     & $0$ & $1$ & $2$ & $3$ & $4$ & $5$ & $6$ & $7$ & $8$ & $9$
\\ \cline{2-12}
    & $0$ &  &  &  &  & z & z & z & z & z & z
\\
    & $1$ &  &  &  &  & z & z & z & z & z & z
\\
    & $2$ &  &  &  &  & z & z & z & z & z & z
\\
    & $3$ &  &  &  &  & z & z & z & z & z & z
\\
$s$ & $4$ &  &  &  &  & z
\\
    & $5$ & z & z & z & z & z
\\
    & $6$ & z & z & z & z & z
\\
    & $7$ & z & z & z & z & z
\\
    & $8$ & z & z & z & z & z
\\
    & $9$ & z & z & z & z & z
\end{tabular}
\end{center}
\item[(iii)] 
Assume $s=6$ and $t=3$.
\begin{center}
\begin{tabular}{cc|cccccccccc}
 \multicolumn{2}{c}{} & &   &     & $t$ &     &     & $s$
\\
    &     & $0$ & $1$ & $2$ & $3$ & $4$ & $5$ & $6$ & $7$ & $8$ & $9$
\\ \cline{2-12}
    & $0$ &  &  &  & z & z & z & z & z & z & z
\\
    & $1$ &  &  &  & z & z & z & z & z & z & z
\\
    & $2$ &  &  &  & z & z & z & z & z & z & z
\\
$t$ & $3$ &  &  &  & z & z & z & z & z & z & z
\\
    & $4$ &  &  &  &   &   &   &   & z & z & z
\\
    & $5$ &  &  &  &   &   &   &   & z & z & z
\\
$s$ & $6$
\\
    & $7$ & z & z & z & z
\\
     &$8$ & z & z & z & z
\\
    & $9$ & z & z & z & z
\end{tabular}
\end{center}
\end{itemize}
\end{example}

\begin{lemma}     \label{lem:sizeZZ}   \samepage
For $0 \leq s,t \leq d$, the number of zigzag elements in
$e^*_s \ot D \ot D^* \ot e_t$ is
$d+1+s(d-s)+t(d-t)$.
\end{lemma}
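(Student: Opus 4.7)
The plan is to apply Lemma~\ref{lem:ZZ} directly. That lemma classifies, in each of the three cases $s<t$, $s=t$, $s>t$, exactly which pairs $(i,j)\in\{0,1,\dots,d\}^2$ produce a zigzag element $e^*_s \ot e_i \ot e^*_j \ot e_t$, and does so by listing three rectangular index-sets per case. The three rectangles within each case are pairwise disjoint: the ranges on $i$ involve strict versus non-strict inequalities precisely at $s$ and $t$, so the three $i$-ranges partition $\{0,1,\dots,d\}$ without overlap. Hence the count of zigzag elements in $e^*_s \ot D \ot D^* \ot e_t$ is the sum of the three rectangle cardinalities, and this is what I need to compute.

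First I would read off the three cardinalities in each case from the table of Lemma~\ref{lem:ZZ}. For $s<t$ the counts are $s(d-t+1)$, $s(t-s-1)$, and $(d-t+1)(t+1)$. For $s=t$ they are $s(d-s+1)$, $1$, and $(d-s)(s+1)$. For $s>t$ they are $(t+1)(d-t+1)$, $(s-t-1)(d-s)$, and $(d-s)(t+1)$. Next I would simplify in each case. When $s\ne t$, two of the three rectangles share a common factor ($s$ in the first case, $d-s$ in the third), so the sum collapses to $s(d-s)+(t+1)(d-t+1)$; expanding $(t+1)(d-t+1)=t(d-t)+d+1$ then yields the desired $d+1+s(d-s)+t(d-t)$. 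For $s=t$ the sum equals $2s(d-s)+d+1$, which equals $s(d-s)+t(d-t)+d+1$ because $s=t$.

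The main obstacle, such as it is, is bookkeeping rather than mathematical content: one must confirm that the three subcases in Lemma~\ref{lem:ZZ} genuinely partition the zigzag pairs $(i,j)$ in each case, and handle the degenerate situations (for example $t-s-1=0$ when $t=s+1$, which forces one rectangle to be empty) consistently with the final formula. Both points are transparent from the table in Lemma~\ref{lem:ZZ}, and an empty rectangle simply contributes $0$, which is already captured by the factored forms above. Everything else reduces to the routine arithmetic just sketched.
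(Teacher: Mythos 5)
Your proposal is correct and matches the paper's proof, which simply says ``Routine using Lemma \ref{lem:ZZ}''; the counting and simplification you carry out is exactly the intended routine verification. (One cosmetic remark: for $s\neq t$ the three $i$-ranges in Lemma \ref{lem:ZZ} are pairwise disjoint but omit $i=s$, so they do not quite \emph{partition} $\{0,1,\dots,d\}$ --- but only disjointness is needed for the count, so this does not affect your argument.)
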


\begin{proof}
Routine using Lemma \ref{lem:ZZ}.
\end{proof}

\medskip

From now until Theorem \ref{thm:mainn4} we fix integers $s,t$ 
$(0 \leq s,t\leq d)$.
Our next goal is to prove that \eqref{eq:Rstright}, \eqref{eq:Rstleft} 
together form a basis for $R_{s,t}$, and that the zigzag elements in 
$e^*_s \ot D \ot D^* \ot e_t$ form a basis for a complement of 
$R_{s,t}$ in $e^*_s \ot D \ot D^* \ot e_t$.

\medskip

We partition the set of nonzigzag elements into two subsets,
consisting of the {\em positive} and {\em negative} elements.
These elements are defined as follows.

\medskip

\begin{definition}        \label{def:positive}
For $0 \leq i,j\leq d$,
the element $e^*_s \ot e_i \ot e^*_j \ot e_t$
is said to be {\em positive} whenever it satisfies
at least one of the following conditions:
\[
\begin{array} {c|c}
 \text{case} &  \text{conditions}
\\ \hline \hline
 s<t & \; 0 \leq i < s \; \text{ and } \; i \leq j <t
\\
     & \; s \leq i < t \; \text{ and } \; s \leq j < s+t-i
\\
     & \; t < j \leq i \leq d
\\ \hline
 s=t & \; 0 \leq i \leq j < s
\\
     & \; s < j \leq i \leq d
\\ \hline
 s>t & \; 0 \leq i \leq j < t
\\
     & \; t < i \leq s \; \text{ and } \; s+t-i < j \leq s
\\
     & \; s < i \leq d \; \text{ and } \; t < j \leq i
\end{array}
\]
Observe that if $e^*_s \ot e_i \ot e^*_j \ot e_t$ is positive 
then it is nonzigzag. We call the element
$e^*_s \ot e_i \ot e^*_j \ot e_t$ {\em negative} whenever it is 
nonzigzag and not positive.
\end{definition}

\begin{lemma}         \label{lem:i-t}      \samepage
The following {\rm (i)}, {\rm (ii)} hold.
\begin{itemize}
\item[\rm (i)]
For $0 \leq i \leq d$ the number of $j$ $(0 \leq j \leq d)$ such that
$e^*_s \ot e_i \ot e^*_j \ot e_t$ is positive is equal to $|i-t|$.
\item[\rm (ii)]
For $0 \leq j \leq d$ the number of $i$ $(0 \leq i \leq d)$ such that
$e^*_s \ot e_i \ot e^*_j \ot e_t$ is negative is equal to $|j-s|$.
\end{itemize}
\end{lemma}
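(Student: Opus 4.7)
The plan is to prove both parts by direct case analysis on the relative order of $s$ and $t$, following the organization of Definition~\ref{def:positive}. Since positivity is defined piecewise via three ``strip'' regions in the $(i,j)$-grid (one for each of $s<t$, $s=t$, $s>t$), and since part~(i) fixes $i$ while part~(ii) fixes $j$, each case will reduce to counting a single integer interval.

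For part~(i), fix $i$ and split on $s<t$, $s=t$, $s>t$. In the subcase $s<t$, if $0\le i<s$ then only the first defining condition of positivity can apply, so $j$ runs over $\{i,i+1,\ldots,t-1\}$, giving $t-i=|i-t|$ values. If $s\le i<t$, only the second condition applies, yielding $j\in\{s,\ldots,s+t-i-1\}$, of cardinality $t-i=|i-t|$; the value $i=t$ gives an empty interval in the third condition, matching $|i-t|=0$; and for $t<i\le d$ the third condition gives $j\in\{t+1,\ldots,i\}$, of size $i-t=|i-t|$. The cases $s=t$ and $s>t$ are entirely analogous, with the piece $s+t-i$ in the middle band of the $s>t$ case playing the role it played for $s<t$. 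In each subcase the count collapses to $|i-t|$, establishing~(i).

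For part~(ii), I would run the same kind of case analysis, but now with $j$ fixed and the cases indexed by the position of $j$ relative to $s$ and $t$. The most efficient route is to compute, for each fixed $j$, the number of $i$ with $e^*_s\ot e_i\ot e^*_j\ot e_t$ nonzigzag (using Lemma~\ref{lem:ZZ}), then subtract the number of $i$ with $e^*_s\ot e_i\ot e^*_j\ot e_t$ positive (computed from Definition~\ref{def:positive} by the same interval-counting as above). Both counts are linear piecewise in $j$, and in each region the difference telescopes to $|j-s|$. As a sanity check on the bookkeeping, summing (i) over $i$ and (ii) over $j$ should give $\sum_{i=0}^d|i-t|+\sum_{j=0}^d|j-s|=\tfrac{s(s+1)+(d-s)(d-s+1)+t(t+1)+(d-t)(d-t+1)}{2}$, which equals $(d+1)^2-\bigl(d+1+s(d-s)+t(d-t)\bigr)$, i.e., the total number of nonzigzag elements in $e^*_s\ot D\ot D^*\ot e_t$ as predicted by Lemma~\ref{lem:sizeZZ}. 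This identity both motivates the statement and provides a global consistency check on the individual subcase counts.

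The main obstacle is purely combinatorial bookkeeping: each of the three cases for $s$ vs.\ $t$ splits into three or four subranges of $i$ (respectively $j$), and within the middle band the bound $s+t-i$ (respectively $s+t-j$) requires care when it meets the endpoints $s$, $t$. There is no conceptual difficulty, and essentially no appeal to properties of the algebra $T$; the content of the lemma is that the piecewise definition of ``positive/negative'' is exactly calibrated so that row-sums and column-sums behave linearly in the distance $|i-t|$ and $|j-s|$.
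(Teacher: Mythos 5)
Your proposal is correct and matches the paper's approach: the paper's proof is simply ``Routine using Lemma \ref{lem:ZZ} and Definition \ref{def:positive},'' and your interval-by-interval case analysis (plus the column-count-by-subtraction for the negative elements) is exactly the routine verification being invoked. The global consistency check against Lemma \ref{lem:sizeZZ} is a nice addition but not part of the paper's argument.
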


\begin{proof}
Routine using Lemma \ref{lem:ZZ} and Definition \ref{def:positive}.
\end{proof}

\begin{example}  \label{exam:2}
Referring to Example \ref{exam:1},
for each value of $s,t$ we display the positive and negative elements in
$e^*_s \ot D \ot D^* \ot e_t$.
For each table below and for $0 \leq i,j\leq d$,
the $(i,j)$-entry is `$+$' (resp. `$-$') if and only if the element
$e^*_s \ot e_i \ot e^*_j \ot e_t$ is positive (resp. negative).
For $0 \leq i \leq d$ the number of `$+$' in row $i$ is given 
in the column at the right.
For $0 \leq j \leq d$ the number of `$-$' in column $j$ is given 
in the row at the bottom.
\begin{itemize}
\item[(i)]
Assume $s=3$ and $t=6$.
\begin{center}
\begin{tabular}{cc|cccccccccc|c}
  \multicolumn{2}{c}{} & & & & $s$ &    &     & $t$
\\
    &     & $0$ & $1$ & $2$ & $3$ & $4$ & $5$ & $6$ & $7$ & $8$ & $9$ & \#$+$
\\ \cline{2-13}
    & $0$ & $+$ & $+$ & $+$ & $+$ & $+$ & $+$ &  z  &  z  &  z  &  z  & $6$
\\
    & $1$ & $-$ & $+$ & $+$ & $+$ & $+$ & $+$ &  z  &  z  &  z  &  z  & $5$
\\
    & $2$ & $-$ & $-$ & $+$ & $+$ & $+$ & $+$ &  z  &  z  &  z  &  z  & $4$
\\
$s$ & $3$ & $-$ & $-$ & $-$ & $+$ & $+$ & $+$ & $-$ & $-$ & $-$ & $-$ & $3$
\\
    & $4$ &  z  &  z  &  z  & $+$ & $+$ & $-$ & $-$ & $-$ & $-$ & $-$ & $2$
\\
    & $5$ &  z  &  z  &  z  & $+$ & $-$ & $-$ & $-$ & $-$ & $-$ & $-$ & $1$
\\
$t$ & $6$ &  z  &  z  &  z  &  z  &  z  &  z  &  z  & $-$ & $-$ & $-$ & $0$
\\
    & $7$ &  z  &  z  &  z  &  z  &  z  &  z  &  z  & $+$ & $-$ & $-$ & $1$
\\
    & $8$ &  z  &  z  &  z  &  z  &  z  &  z  &  z  & $+$ & $+$ & $-$ & $2$
\\
    & $9$ &  z  &  z  &  z  &  z  &  z  &  z  &  z  & $+$ & $+$ & $+$ & $3$
\\ \cline{2-13}
    & \#$-$ & $3$ & $2$ & $1$ & $0$ & $1$ & $2$ & $3$ & $4$ & $5$ & $6$
\end{tabular}
\end{center}
\item[(ii)]
Assume $s=4$ and $t=4$.
\begin{center}
\begin{tabular}{cc|cccccccccc|c}
 \multicolumn{2}{c}{} & & & &     & $s$
\\
    &     & $0$ & $1$ & $2$ & $3$ & $4$ & $5$ & $6$ & $7$ & $8$ & $9$ & \#$+$
\\ \cline{2-13}
    & $0$ & $+$ & $+$ & $+$ & $+$ &  z  &  z  &  z  &  z  &  z  &  z  & $4$
\\
    & $1$ & $-$ & $+$ & $+$ & $+$ &  z  &  z  &  z  &  z  &  z  &  z  & $3$
\\
    & $2$ & $-$ & $-$ & $+$ & $+$ &  z  &  z  &  z  &  z  &  z  &  z  & $2$
\\
    & $3$ & $-$ & $-$ & $-$ & $+$ &  z  &  z  &  z  &  z  &  z  &  z  & $1$
\\
$s$ & $4$ & $-$ & $-$ & $-$ & $-$ &  z  & $-$ & $-$ & $-$ & $-$ & $-$ & $0$
\\
    & $5$ &  z  &  z  &  z  &  z  &  z  & $+$ & $-$ & $-$ & $-$ & $-$ & $1$
\\
    & $6$ &  z  &  z  &  z  &  z  &  z  & $+$ & $+$ & $-$ & $-$ & $-$ & $2$
\\
    & $7$ &  z  &  z  &  z  &  z  &  z  & $+$ & $+$ & $+$ & $-$ & $-$ & $3$
\\
    & $8$ &  z  &  z  &  z  &  z  &  z  & $+$ & $+$ & $+$ & $+$ & $-$ & $4$
\\
    & $9$ &  z  &  z  &  z  &  z  &  z  & $+$ & $+$ & $+$ & $+$ & $+$ & $5$
\\ \cline{2-13}
    & \#$-$ & $4$ & $3$ & $2$ & $1$ & $0$ & $1$ & $2$ & $3$ & $4$ & $5$
\end{tabular}
\end{center}
\item[(iii)]
Assume $s=6$ and $t=3$.
\begin{center}
\begin{tabular}{cc|cccccccccc|c}
  \multicolumn{2}{c}{}  &     &     &     & $t$ &     &     & $s$
\\
    &     & $0$ & $1$ & $2$ & $3$ & $4$ & $5$ & $6$ & $7$ & $8$ & $9$ & \#$+$
\\ \cline{2-13}
    & $0$ & $+$ & $+$ & $+$ &  z  &  z  &  z  &  z  &  z  &  z  &  z  & $3$
\\
    & $1$ & $-$ & $+$ & $+$ &  z  &  z  &  z  &  z  &  z  &  z  &  z  & $2$
\\
    & $2$ & $-$ & $-$ & $+$ &  z  &  z  &  z  &  z  &  z  &  z  &  z  & $1$
\\
$t$ & $3$ & $-$ & $-$ & $-$ &  z  &  z  &  z  &  z  &  z  &  z  &  z  & $0$
\\
    & $4$ & $-$ & $-$ & $-$ & $-$ & $-$ & $-$ & $+$ &  z  &  z  &  z  & $1$
\\
    & $5$ & $-$ & $-$ & $-$ & $-$ & $-$ & $+$ & $+$ &  z  &  z  &  z  & $2$
\\
$s$ & $6$ & $-$ & $-$ & $-$ & $-$ & $+$ & $+$ & $+$ & $-$ & $-$ & $-$ & $3$
\\
    & $7$ &  z  &  z  &  z  &  z  & $+$ & $+$ & $+$ & $+$ & $-$ & $-$ & $4$
\\
    & $8$ &  z  &  z  &  z  &  z  & $+$ & $+$ & $+$ & $+$ & $+$ & $-$ & $5$
\\
    & $9$ &  z  &  z  &  z  &  z  & $+$ & $+$ & $+$ & $+$ & $+$ & $+$ & $6$
\\ \cline{2-13}
    & \#$-$ & $6$ & $5$ & $4$ & $3$ & $2$ & $1$ & $0$ & $1$ & $2$ & $3$
\end{tabular}
\end{center}
\end{itemize}
\end{example}

\begin{definition}        \label{def:weight}
For each nonzigzag element $e^*_s \ot e_i \ot e^*_j \ot e_t$
we define its {\em weight} as follows.
\begin{itemize}
\item[(i)]
For $s<t$:
\[
\begin{array} {c|c}
 \text{case} &  \text{weight}
\\ \hline \hline
 0 \leq i < s \; \text{ and } \; i \leq j <t  
 & 2i
\\
 s \leq i < t \; \text{ and } \; s \leq j < s+t-i 
 & 2(d-t+i+1)
\\
 t < j \leq i \leq d
 & 2(d-i+s)
\\ \hline
 0 \leq j < i \leq s
 & 2j+1
\\
 s < j \leq t \; \text{ and } \; s+t-j \leq i < t
 & 2(d-j+s)+1
\\
 t < j \leq d \; \text{ and } \; s \leq i < j
 & 2(d-j+s)+1
\end{array}
\]
\item[(ii)]
For $s=t$:
\[
\begin{array} {c|c}
 \text{case} & \text{weight}
\\ \hline \hline
 0 \leq i \leq j < s
 & 2i
\\
 s < j \leq i \leq d
 & 2(d-i+s)
\\ \hline
 0 \leq j < i \leq s
 & 2j+1
\\
 s \leq i < j \leq d
 & 2(d-j+s)+1
\end{array}
\]
\item[(iii)]
For $s>t$:
\[
\begin{array} {c|c}
 \text{case} &  \text{weight}
\\ \hline \hline
 0 \leq i \leq j < t 
 & 2i
\\
 t < i \leq s \; \text{ and } \; s+t-i < j \leq s
 & 2(d-i+t+1)
\\
 s < i \leq d \; \text{ and } \; t < j \leq i
 & 2(d-i+t)
\\ \hline
 0 \leq j < t \; \text{ and } \; j < i \leq s
 & 2j+1
\\
 t \leq j < s \; \text{ and } \;  t < i \leq s+t-j
 & 2(d-s+j)+1
\\
 s \leq i < j \leq d
 & 2(d-j+t)+1
\end{array}
\]
\end{itemize}
\end{definition}

\begin{note}
For $0 \leq i,j\leq d$, 
if $e^*_s \ot e_i \ot e^*_j \ot e_t$ is positive (resp. negative) 
then its weight is even (resp. odd).
\end{note}

\begin{example}  \label{exam:3}
Referring to Example \ref{exam:1},
for each value of $s,t$ we display the weight of the nonzigzag
elements in $e^*_s \ot D \ot D^* \ot e_t$.
For each table below and for $0 \leq i,j\leq d$ such that
$e^*_s \ot e_i \ot e^*_j \ot e_t$ is nonzigzag,
the $(i,j)$-entry is the weight of
$e^*_s \ot e_i \ot e^*_j \ot e_t$.
\begin{itemize}
\item[(i)]
Assume $s=3$ and $t=6$.
\begin{center}
\begin{tabular}{cc|cccccccccc}
  \multicolumn{2}{c}{} & & & & $s$ &    &     & $t$
\\
    &     & $0$ & $1$ & $2$ & $3$ & $4$ & $5$ & $6$ & $7$ & $8$ & $9$
\\ \cline{2-12}
    & $0$ & $0$ & $0$ & $0$ & $0$ & $0$ & $0$ &  z  &  z  &  z  &  z
\\
    & $1$ & $1$ & $2$ & $2$ & $2$ & $2$ & $2$ &  z  &  z  &  z  &  z
\\
    & $2$ & $1$ & $3$ & $4$ & $4$ & $4$ & $4$ &  z  &  z  &  z  &  z
\\
$s$ & $3$ & $1$ & $3$ & $5$ & $14$& $14$& $14$& $13$& $11$& $9$ & $7$
\\
    & $4$ &  z  &  z  &  z  & $16$& $16$& $15$& $13$& $11$& $9$ & $7$
\\
    & $5$ &  z  &  z  &  z  & $18$& $17$& $15$& $13$& $11$& $9$ & $7$
\\
$t$ & $6$ &  z  &  z  &  z  &  z  &  z  &  z  &  z  & $11$& $9$ & $7$
\\
    & $7$ &  z  &  z  &  z  &  z  &  z  &  z  &  z  & $10$& $9$ & $7$
\\
    & $8$ &  z  &  z  &  z  &  z  &  z  &  z  &  z  & $8$ & $8$ & $7$
\\
    & $9$ &  z  &  z  &  z  &  z  &  z  &  z  &  z  & $6$ & $6$ & $6$
\end{tabular}
\end{center}
\item[(ii)]
Assume $s=4$ and $t=4$.
\begin{center}
\begin{tabular}{cc|cccccccccc}
 \multicolumn{2}{c}{} & & & &     & $s$
\\
    &     & $0$ & $1$ & $2$ & $3$ & $4$ & $5$ & $6$ & $7$ & $8$ & $9$
\\ \cline{2-12}
    & $0$ & $0$ & $0$ & $0$ & $0$ &  z  &  z  &  z  &  z  &  z  &  z
\\
    & $1$ & $1$ & $2$ & $2$ & $2$ &  z  &  z  &  z  &  z  &  z  &  z
\\
    & $2$ & $1$ & $3$ & $4$ & $4$ &  z  &  z  &  z  &  z  &  z  &  z 
\\
    & $3$ & $1$ & $3$ & $5$ & $6$ &  z  &  z  &  z  &  z  &  z  &  z 
\\
$s$ & $4$ & $1$ & $3$ & $5$ & $7$ &  z  & $17$& $15$& $13$& $11$& $9$
\\
    & $5$ &  z  &  z  &  z  &  z  &  z  & $16$& $15$& $13$& $11$& $9$
\\
    & $6$ &  z  &  z  &  z  &  z  &  z  & $14$& $14$& $13$& $11$& $9$
\\
    & $7$ &  z  &  z  &  z  &  z  &  z  & $12$& $12$& $12$& $11$& $9$
\\
    & $8$ &  z  &  z  &  z  &  z  &  z  & $10$& $10$& $10$& $10$& $9$
\\
    & $9$ &  z  &  z  &  z  &  z  &  z  & $8$ & $8$ & $8$ & $8$ & $8$
\end{tabular}
\end{center}
\item[(iii)]
Assume $s=6$ and $t=3$.
\begin{center}
\begin{tabular}{cc|cccccccccc}
  \multicolumn{2}{c}{}  &     &     &     & $t$ &     &     & $s$
\\
    &     & $0$ & $1$ & $2$ & $3$ & $4$ & $5$ & $6$ & $7$ & $8$ & $9$
\\ \cline{2-12}
    & $0$ & $0$ & $0$ & $0$ &  z  &  z  &  z  &  z  &  z  &  z  &  z
\\
    & $1$ & $1$ & $2$ & $2$ &  z  &  z  &  z  &  z  &  z  &  z  &  z
\\
    & $2$ & $1$ & $3$ & $4$ &  z  &  z  &  z  &  z  &  z  &  z  &  z
\\
$t$ & $3$ & $1$ & $3$ & $5$ &  z  &  z  &  z  &  z  &  z  &  z  &  z
\\
    & $4$ & $1$ & $3$ & $5$ & $13$& $15$& $17$& $18$&  z  &  z  &  z
\\
    & $5$ & $1$ & $3$ & $5$ & $13$& $15$& $16$& $16$&  z  &  z  &  z
\\
$s$ & $6$ & $1$ & $3$ & $5$ & $13$& $14$& $14$& $14$& $11$& $9$ & $7$
\\
    & $7$ &  z  &  z  &  z  &  z  & $10$& $10$& $10$& $10$& $9$ & $7$
\\
    & $8$ &  z  &  z  &  z  &  z  & $8$ & $8$ & $8$ & $8$ & $8$ & $7$
\\
    & $9$ &  z  &  z  &  z  &  z  & $6$ & $6$ & $6$ & $6$ & $6$ & $6$
\end{tabular}
\end{center}
\end{itemize}
\end{example}

\begin{lemma}         \label{lem:weight}    \samepage
The following {\rm (i)}, {\rm (ii)} hold for $0 \leq i,j,r \leq d$.
\begin{itemize}
\item[\rm (i)]
Assume $e^*_s \ot e_i \ot e^*_j \ot e_t$ is positive and
$e^*_s \ot e_i \ot e^*_r \ot e_t$ is negative.
Then the weight of $e^*_s \ot e_i \ot e^*_j \ot e_t$
is greater than the weight of $e^*_s \ot e_i \ot e^*_r \ot e_t$.
\item[\rm (ii)]
Assume $e^*_s \ot e_i \ot e^*_j \ot e_t$ is negative and
$e^*_s \ot e_r \ot e^*_j \ot e_t$ is positive.
Then the weight of $e^*_s \ot e_i \ot e^*_j \ot e_t$
is greater than the weight of $e^*_s \ot e_r \ot e^*_j \ot e_t$.
\end{itemize}
\end{lemma}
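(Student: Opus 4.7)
The proof is a direct case analysis guided by Definition~\ref{def:weight}. In each part, I fix the specified index together with the sign hypotheses, locate the rows of Definition~\ref{def:weight} that apply to the positive and negative elements under comparison, and verify the claimed strict inequality arithmetically. The parity observation stated between Definition~\ref{def:weight} and Example~\ref{exam:3} — positive weights are even, negative weights odd — reduces the job to establishing a weak inequality of the correct kind, which is then automatically strict.

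For part (i), I fix $i$ and split according to $s<t$, $s=t$, $s>t$. For fixed $i$ the positive weight of $e^*_s \ot e_i \ot e^*_j \ot e_t$ depends only on which row of Definition~\ref{def:weight} contains the pair $(i,\cdot)$, and the negative weight of $e^*_s \ot e_i \ot e^*_r \ot e_t$ is easily maximized over the negative $r$ compatible with that $i$. For instance, when $s<t$ and $0 \le i<s$ the positive weight is $2i$ and the only admissible negative $r$ lie in $[0,i-1]$, so the negative weight is at most $2i-1$; when $s\le i<t$ the positive weight is $2(d-t+i+1)$ and the maximum negative weight, attained at the smallest admissible $r$ in the middle negative row, equals $2(d-t+i)+1$; when $i>t$ the positive weight is $2(d-i+s)$ and the admissible negative $r$ lie in $[i+1,d]$, with maximal weight $2(d-i+s)-1$. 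The subcase $s=t$ collapses these formulas, and $s>t$ is symmetric to $s<t$.

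Part (ii) is dual: I fix $j$, identify the row containing the negative element, and run through the positive rows compatible with that $j$. The crucial computation arises when both elements lie in their respective middle rows, with $s<j\le t$ and $s\le r<s+t-j$; then the negative weight $2(d-j+s)+1$ exceeds the largest positive weight $2(d-t+r+1)$, attained at $r=s+t-j-1$, by exactly one. All other combinations yield strictly larger gaps and are routine.

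The main obstacle is purely bookkeeping: the weight definition has nine sub-tables ($s<t$, $s=t$, $s>t$, each with three positive and three negative cases), and one must verify that the index constraints on $(i,j,r)$ remain mutually consistent in every comparison. The numerical tables in Example~\ref{exam:3} provide a useful sanity check: in each row the even positive labels strictly dominate the odd negative labels, and in each column the opposite strict inequality holds — precisely the content of parts (i) and (ii).
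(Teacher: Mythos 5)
Your proof is correct and follows exactly the route the paper intends: the paper's own proof of this lemma is simply the statement that it is ``routine using Definitions \ref{def:positive} and \ref{def:weight},'' and your case analysis (fixing the shared index, comparing the applicable weight rows, and using the even/odd parity of positive/negative weights to upgrade weak inequalities to strict ones) is precisely that routine verification carried out. The specific numerical checks you highlight — e.g.\ the gap of exactly one in the middle-row comparison for part (ii) with $s<j\le t$ — are accurate.
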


\begin{proof}
Routine using Definitions \ref{def:positive} and \ref{def:weight}.
\end{proof}

\begin{definition}          \label{def:fij}   \samepage
We define the following polynomials in $\K[\lambda]$.
\begin{itemize}
\item[(i)]
For each $i,j$ $(0 \leq i,j \leq d)$ such that
$e^*_s \ot e_i \ot e^*_j \ot e_t$ is positive,
we define a polynomial
\[
  f^+_{ij} = \prod_k \frac{\lambda - \th^*_k}
                          {\th^*_j - \th^*_k},
\]
where the product is over all integers $k$ $(0 \leq k \leq d)$
such that $k \neq j$ and
$e^*_s \ot e_i \ot e^*_k \ot e_t$ is positive.
The polynomial $f^+_{ij}$ has degree $|i-t|-1$ by Lemma \ref{lem:i-t}(i).
\item[(ii)]
For each $i,j$ $(0 \leq i,j \leq d)$ such that
$e^*_s \ot e_i \ot e^*_j \ot e_t$ is negative,
we define a polynomial
\[
  f^-_{ij} = \prod_k \frac{\lambda - \th_k}
                          {\th_i - \th_k},
\]
where the product is over all integers $k$ $(0 \leq k \leq d)$
such that $k \neq i$ and 
$e^*_s \ot e_k \ot e^*_j \ot e_t$ is negative.
The polynomial $f^-_{ij}$ has degree $|j-s|-1$ by Lemma \ref{lem:i-t}(ii).
\end{itemize}
\end{definition}

\begin{lemma}     \label{lem:fij}
For $0 \leq i,j \leq d$ the following {\rm (i)}, {\rm (ii)} hold.
\begin{itemize}
\item[\rm (i)]
Assume $e^*_s \ot e_i \ot e^*_j \ot e_t$ is positive.
Then 
\[
  e^*_s \ot e_i \ot e^*_j \ot e_t =
      e^*_s \ot e_i \ot f^+_{ij}(a^*) \ot e_t 
    - \sum_h f^+_{ij}(\th^*_h) e^*_s \ot e_i \ot e^*_h \ot e_t,
\]
where the sum is over all integers $h$ $(0 \leq h \leq d)$
such that 
$e^*_s \ot e_i \ot e^*_h \ot e_t$ is zigzag or negative.
\item[\rm (ii)]
Assume $e^*_s \ot e_i \ot e^*_j \ot e_t$ is negative.
Then 
\[
  e^*_s \ot e_i \ot e^*_j \ot e_t =
      e^*_s \ot f^-_{ij}(a) \ot e^*_j \ot e_t 
    - \sum_h f^-_{ij}(\th_h) e^*_s \ot e_h \ot e^*_j \ot e_t,
\]
where the sum is over all integers $h$ $(0 \leq h \leq d)$
such that 
$e^*_s \ot e_h \ot e^*_j \ot e_t$ is zigzag or positive.
\end{itemize}
\end{lemma}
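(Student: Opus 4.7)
The proof proposal is that Lemma \ref{lem:fij} is a direct consequence of Lagrange interpolation applied to the spectral decompositions $a^* = \sum_{h=0}^d \th^*_h e^*_h$ and $a = \sum_{h=0}^d \th_h e_h$ from \eqref{eq:sumthiei}. There is no real obstacle here; the work is already done in setting up the weights and in Lemma \ref{lem:i-t}, which guarantees that the products defining $f^+_{ij}$ and $f^-_{ij}$ have the right form.

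For part (i), I first observe that for any polynomial $g \in \K[\lambda]$, the relations \eqref{eq:eiej}, \eqref{eq:sumei}, \eqref{eq:sumthiei} give $g(a^*) = \sum_{h=0}^d g(\th^*_h)\,e^*_h$. Applying this with $g = f^+_{ij}$ yields
\[
e^*_s \ot e_i \ot f^+_{ij}(a^*) \ot e_t
 \;=\; \sum_{h=0}^d f^+_{ij}(\th^*_h)\, e^*_s \ot e_i \ot e^*_h \ot e_t.
\]
Next I split the sum on $h$ according to whether $e^*_s \ot e_i \ot e^*_h \ot e_t$ is positive, negative, or zigzag. For $h = j$ the defining product for $f^+_{ij}$ gives $f^+_{ij}(\th^*_j) = 1$. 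For any other index $h \neq j$ such that $e^*_s \ot e_i \ot e^*_h \ot e_t$ is positive, the index $h$ appears in the product, so the numerator contains the factor $\th^*_h - \th^*_h = 0$, and hence $f^+_{ij}(\th^*_h) = 0$; here I am using that $\{\th^*_k\}_{k=0}^d$ are distinct, see \eqref{eq:distinct}, so no cancellation in the denominator occurs. Thus only the term $h=j$ and the terms with $e^*_s \ot e_i \ot e^*_h \ot e_t$ zigzag or negative survive, which gives
\[
e^*_s \ot e_i \ot f^+_{ij}(a^*) \ot e_t
 \;=\; e^*_s \ot e_i \ot e^*_j \ot e_t
    + \sum_h f^+_{ij}(\th^*_h)\, e^*_s \ot e_i \ot e^*_h \ot e_t,
\]
where the sum is over those $h$. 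Rearranging yields the formula in (i).

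Part (ii) is completely analogous, exchanging the roles of $a,a^*$ and of $e_i, e^*_j$. Using $g(a) = \sum_{h=0}^d g(\th_h)\,e_h$ applied with $g = f^-_{ij}$, the value $f^-_{ij}(\th_i) = 1$ picks out $h = i$, while $f^-_{ij}(\th_h) = 0$ for all other $h$ with $e^*_s \ot e_h \ot e^*_j \ot e_t$ negative. The remaining indices are exactly those for which $e^*_s \ot e_h \ot e^*_j \ot e_t$ is zigzag or positive, and rearranging gives the claim. The only things worth double-checking are the distinctness assumptions in \eqref{eq:distinct} (needed so that the denominators of $f^\pm_{ij}$ are nonzero) and the degree statements in Definition \ref{def:fij}, which follow directly from counting the index sets via Lemma \ref{lem:i-t}; neither presents any real difficulty.
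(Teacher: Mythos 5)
Your proof is correct and follows essentially the same route as the paper: expand $f^+_{ij}(a^*)=\sum_{h=0}^d f^+_{ij}(\th^*_h)e^*_h$ via \eqref{eq:sumthiei}, note $f^+_{ij}(\th^*_j)=1$ and $f^+_{ij}(\th^*_h)=0$ for the other positive indices $h$, and rearrange; part (ii) is symmetric. No issues.
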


\begin{proof}
(i):
Using $a^* = \sum_{h=0}^d \th^*_h e^*_h$ we find
\begin{equation}     \label{eq:fijas}
  f^+_{ij}(a^*) = \sum_{h=0}^d f^+_{ij}(\th^*_h)e^*_h.
\end{equation}
We evaluate the right side of \eqref{eq:fijas}.
By Definition \ref{def:fij}(i) we have $f^+_{ij}(\th^*_j)=1$.
We have also
$f^+_{ij}(\th^*_h)=0$ if $h \neq j$ and
$e^*_s \ot e_i \ot e^*_h \ot e_t$ is positive.
Thus the right side of \eqref{eq:fijas} is equal to
$e^*_j + \sum_{h} f^+_{ij}(\th^*_h)e^*_h$,
where the sum is over all integers $h$ $(0 \leq h \leq d)$
such that $e^*_s \ot e_i \ot e^*_h \ot e_t$ is zigzag or negative.
The result follows.

(ii): Similar to the proof of (i).
\end{proof}

\begin{theorem}     \label{thm:mainn4}    \samepage
For $0 \leq s,t \leq d$
the following elements {\rm (i)}--{\rm (iii)} together form a basis 
for $e^*_s \ot D \ot D^* \ot e_t$:
\begin{itemize}
\item[\rm (i)] 
the elements \eqref{eq:Rstright};
\item[\rm (ii)]
the elements \eqref{eq:Rstleft};
\item[\rm (iii)]
the zigzag elements in $e^*_s \ot D \ot D^* \ot e_t$.
\end{itemize}
\end{theorem}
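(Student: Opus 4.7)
The plan is to combine a dimension count with a spanning argument: I would first check that the proposed set has cardinality equal to $\dim(e^*_s \ot D \ot D^* \ot e_t)$, and then establish spanning, from which linear independence follows automatically.

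For the count, Lemma \ref{lem:basisD} gives $\dim D = d+1 = \dim D^*$, so $\dim(e^*_s \ot D \ot D^* \ot e_t) = (d+1)^2$. Summing the three counts provided by Lemmas \ref{lem:sizeRst} and \ref{lem:sizeZZ},
\[
\left(\tfrac{d(d+1)}{2} - t(d-t)\right) + \left(\tfrac{d(d+1)}{2} - s(d-s)\right) + \left(d+1 + s(d-s) + t(d-t)\right) = d(d+1) + (d+1) = (d+1)^2,
\]
so the count is exactly right.

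For spanning, since $\{e^*_s \ot e_i \ot e^*_j \ot e_t\}_{0 \le i,j \le d}$ is already a basis of $e^*_s \ot D \ot D^* \ot e_t$, and zigzag quadruples are by definition in (iii), I only need to express each nonzigzag $e^*_s \ot e_i \ot e^*_j \ot e_t$ in the span of (i)--(iii). I do this by strong induction on the weight defined in Definition \ref{def:weight}. If $e^*_s \ot e_i \ot e^*_j \ot e_t$ is positive, Lemma \ref{lem:fij}(i) yields
\[
e^*_s \ot e_i \ot e^*_j \ot e_t = e^*_s \ot e_i \ot f^+_{ij}(a^*) \ot e_t \;-\; \sum_h f^+_{ij}(\th^*_h)\, e^*_s \ot e_i \ot e^*_h \ot e_t,
\]
where $h$ ranges over indices for which $e^*_s \ot e_i \ot e^*_h \ot e_t$ is zigzag or negative. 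Since $\deg f^+_{ij} = |i-t|-1$, expanding in powers of $a^*$ shows the first term lies in the span of \eqref{eq:Rstright}, hence in the span of (i). In the remaining sum the zigzag terms are in (iii), while by Lemma \ref{lem:weight}(i) each surviving negative term has strictly smaller weight than $e^*_s \ot e_i \ot e^*_j \ot e_t$, so it lies in the span of (i)--(iii) by the inductive hypothesis. The negative case is entirely symmetric: Lemma \ref{lem:fij}(ii) exhibits the element as a member of the span of \eqref{eq:Rstleft}, plus zigzag terms, plus positive terms of strictly smaller weight by Lemma \ref{lem:weight}(ii). The base case is the collection of nonzigzag elements of minimum weight; there the $h$-sum in Lemma \ref{lem:fij} contains only zigzag terms, so the element lies directly in the span of (i) or (ii) plus (iii).

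The real technical obstacle is already packaged in the preceding machinery — the elaborate case analysis defining positive/negative elements and their weights was engineered precisely so that the recursion in Lemma \ref{lem:fij} decreases the weight (Lemma \ref{lem:weight}). Once those lemmas are in hand, the induction runs cleanly, and the dimension count closes the argument.
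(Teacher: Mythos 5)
Your proposal is correct and takes essentially the same approach as the paper: a cardinality count totalling $(d+1)^2$ via Lemmas \ref{lem:sizeRst} and \ref{lem:sizeZZ}, followed by a spanning argument by induction on the weight, invoking Lemma \ref{lem:fij} to rewrite a positive (resp.\ negative) element as something in $R_{s,t}$ plus zigzag terms plus negative (resp.\ positive) terms of strictly smaller weight by Lemma \ref{lem:weight}. The only cosmetic difference is that you spell out the base case and the expansion of $f^+_{ij}(a^*)$ in powers of $a^*$, which the paper leaves implicit.
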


\begin{proof}
The dimension of $e^*_s \ot D \ot D^* \ot e_t$ is $(d+1)^2$. 
The number of elements in (i)--(iii) are given in 
Lemmas \ref{lem:sizeRst} and \ref{lem:sizeZZ}, 
and the sum of these numbers is $(d+1)^2$.
Therefore the number of elements in (i)--(iii) is equal to
the dimension of $e^*_s \ot D \ot D^* \ot e_t$.
To finish the proof, it suffices to show that (i)--(iii) together span 
$e^*_s \ot D \ot D^* \ot e_t$.
Let $S$ denote the subspace of $e^*_s \ot D \ot D^* \ot e_t$ 
spanned by (i)--(iii).
We show $S=e^*_s \ot D \ot D^* \ot e_t$. 
To this end we fix $i,j$ $(0 \leq i,j\leq d)$ and show
\[
    e^*_s \ot e_i \ot e^*_j \ot e_t  \in S.
\]
If $e^*_s \ot e_i \ot e^*_j \ot e_t$ is zigzag
then it is contained in $S$ by the definition of $S$.
So we may assume
$e^*_s \ot e_i \ot e^*_j \ot e_t$ is nonzigzag.
Let $w$ denote its weight.
By induction on the weight, we may assume that
each nonzigzag element of
   $e^*_s \ot D \ot D^* \ot e_t$
that has weight less than $w$ is contained in $S$.
First assume $e^*_s \ot e_i \ot e^*_j \ot e_t$ is positive.
Consider the equation in Lemma \ref{lem:fij}(i).
On the right hand side the first term is in $R_{s,t}$,
so it is contained in $S$.
On the right hand side the remaining terms are either
zigzag or negative.
The negative terms have weight less than $w$ in view of 
Lemma \ref{lem:weight}(i).
In either case they are in $S$, and therefore
$e^*_s \ot e_i \ot e^*_j \ot e_t$ is in $S$.
Next assume
$e^*_s \ot e_i \ot e^*_j \ot e_t$ is negative.
Consider the equation in Lemma \ref{lem:fij}(ii).
On the right hand side the first term is in $R_{s,t}$,
so it is contained in $S$.
On the right hand side the remaining terms are either
zigzag or positive.
The positive terms have weight less than $w$ in view of 
Lemma \ref{lem:weight}(ii).
In either case they are in $S$, and therefore
$e^*_s \ot e_i \ot e^*_j \ot e_t$ is in $S$.
\end{proof}

\begin{corollary}      \samepage
For $0 \leq s,t \leq d$
the elements \eqref{eq:Rstright} and \eqref{eq:Rstleft} together form a basis
for $R_{s,t}$.
\end{corollary}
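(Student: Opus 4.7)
The plan is to obtain this corollary as an immediate consequence of Theorem \ref{thm:mainn4}, with essentially no additional work.

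First I would observe that spanning is automatic: by Definition \ref{def:Rst}, the space $R_{s,t}$ is defined as the span of the union of \eqref{eq:Rstright} and \eqref{eq:Rstleft}, so these elements span $R_{s,t}$ by construction. All that remains is linear independence.

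For linear independence, I would invoke Theorem \ref{thm:mainn4}, which says that the three families (i) \eqref{eq:Rstright}, (ii) \eqref{eq:Rstleft}, and (iii) the zigzag elements together form a basis for $e^*_s \ot D \ot D^* \ot e_t$. In particular every subcollection of this basis is linearly independent, so in particular the union of (i) and (ii), viewed as a subset of $e^*_s \ot D \ot D^* \ot e_t$, is linearly independent. Combined with the spanning observation above, this shows that \eqref{eq:Rstright} and \eqref{eq:Rstleft} together form a basis for $R_{s,t}$.

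There is no substantive obstacle; the difficulty of the argument was concentrated in Theorem \ref{thm:mainn4}, whose proof required the weight-induction and the polynomial-reduction identities of Lemma \ref{lem:fij}. One might, as a sanity check, verify the cardinality: by Lemma \ref{lem:sizeRst} the number of elements in \eqref{eq:Rstright} and \eqref{eq:Rstleft} is $d(d+1) - s(d-s) - t(d-t)$, which agrees with $(d+1)^2 - (d+1+s(d-s)+t(d-t))$, i.e.\ the dimension of $e^*_s \ot D \ot D^* \ot e_t$ minus the count of zigzag elements from Lemma \ref{lem:sizeZZ}. This confirms that the dimension of $R_{s,t}$ is exactly $d(d+1) - s(d-s) - t(d-t)$, consistent with a basis of that size.
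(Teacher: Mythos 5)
Your proposal is correct and matches the paper's (implicit) reasoning: the paper states this corollary without proof precisely because, as you say, spanning holds by Definition \ref{def:Rst} and linear independence is inherited from the basis in Theorem \ref{thm:mainn4}. Your cardinality sanity check is also consistent with Lemmas \ref{lem:sizeRst} and \ref{lem:sizeZZ}.
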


\begin{corollary}    \label{cor:main4}   \samepage
For $0 \leq s,t \leq d$ 
the zigzag elements in $e^*_s \ot D \ot D^* \ot e_t$ form a basis for
a complement of $R_{s,t}$ in $e^*_s \ot D \ot D^* \ot e_t$.
\end{corollary}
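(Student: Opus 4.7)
The plan is to deduce this corollary as an immediate consequence of Theorem \ref{thm:mainn4} together with the preceding corollary. Theorem \ref{thm:mainn4} asserts that the three families (i) the elements \eqref{eq:Rstright}, (ii) the elements \eqref{eq:Rstleft}, and (iii) the zigzag elements in $e^*_s \ot D \ot D^* \ot e_t$ together form a basis for $e^*_s \ot D \ot D^* \ot e_t$. The preceding corollary (immediately before the one to be proved) asserts that the elements in (i) and (ii) together form a basis for $R_{s,t}$.

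From these two facts the proof is essentially a linear-algebraic bookkeeping. First I would let $Z_{s,t}$ denote the subspace of $e^*_s \ot D \ot D^* \ot e_t$ spanned by the zigzag elements. Since (i), (ii), (iii) together are a basis for $e^*_s \ot D \ot D^* \ot e_t$, I get
\[
   e^*_s \ot D \ot D^* \ot e_t = R_{s,t} + Z_{s,t},
\]
where the sum on the right is direct because a linear dependence between zigzag elements and elements of $R_{s,t}$ would produce a linear dependence in the combined basis from Theorem \ref{thm:mainn4}. In particular, the zigzag elements are linearly independent (so they form a basis of $Z_{s,t}$), and $Z_{s,t}$ is a complement of $R_{s,t}$ in $e^*_s \ot D \ot D^* \ot e_t$, which is exactly the desired conclusion.

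There is no real obstacle here, since both of the ingredients have already been established; the only thing to verify is that the directness of the sum follows from the fact that (i)--(iii) form a basis of the ambient space. I would write this out in one or two short sentences, citing Theorem \ref{thm:mainn4} and the previous corollary, and conclude.
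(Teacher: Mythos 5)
Your proposal is correct and matches the paper's approach: the paper states this as an immediate corollary of Theorem \ref{thm:mainn4} (together with the preceding corollary identifying \eqref{eq:Rstright}, \eqref{eq:Rstleft} as a basis of $R_{s,t}$), with no further argument given. Your linear-algebraic bookkeeping is exactly the intended justification.
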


\begin{corollary}    \samepage
The following elements {\rm (i)}--{\rm (iii)} together form a basis 
for $D^* \ot D \ot D^* \ot D$:
\begin{itemize}
\item[\rm (i)]
the elements \eqref{eq:Rright};
\item[\rm (ii)]
the elements \eqref{eq:Rleft};
\item[\rm (iii)]
the zigzag elements in $D^* \ot D \ot D^* \ot D$.
\end{itemize}
\end{corollary}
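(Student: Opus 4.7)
The plan is to assemble the global basis by pasting together the local bases obtained in Theorem \ref{thm:mainn4}. By Lemma \ref{lem:directsum} we have the direct sum decomposition
\[
 D^* \ot D \ot D^* \ot D = \bigoplus_{s=0}^d \bigoplus_{t=0}^d e^*_s \ot D \ot D^* \ot e_t,
\]
so a basis for $D^* \ot D \ot D^* \ot D$ may be obtained by taking the union, over all ordered pairs $(s,t)$ with $0 \leq s,t \leq d$, of any chosen basis for each summand $e^*_s \ot D \ot D^* \ot e_t$.

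For each fixed $(s,t)$, Theorem \ref{thm:mainn4} supplies such a basis, consisting of the elements \eqref{eq:Rstright}, the elements \eqref{eq:Rstleft}, and the zigzag elements in $e^*_s \ot D \ot D^* \ot e_t$. Thus the union of these sets, taken over all $(s,t)$, forms a basis for $D^* \ot D \ot D^* \ot D$.

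It remains to match this union with the three sets (i)--(iii) in the statement. By inspection of Definitions \ref{def:R} and \ref{def:Rst}, the union $\bigcup_{s,t}$ of \eqref{eq:Rstright} is exactly the set \eqref{eq:Rright}, and the union $\bigcup_{s,t}$ of \eqref{eq:Rstleft} is exactly \eqref{eq:Rleft}. Likewise, since every zigzag element of $D^* \ot D \ot D^* \ot D$ has the form $e^*_s \ot e_i \ot e^*_j \ot e_t$ and hence lies in a unique summand $e^*_s \ot D \ot D^* \ot e_t$, the union of the zigzag elements in each summand coincides with the set of all zigzag elements in $D^* \ot D \ot D^* \ot D$. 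This identifies the assembled basis with the list (i)--(iii) and completes the argument.

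There is essentially no obstacle here; the work has already been done in Theorem \ref{thm:mainn4}, and the only task is to verify that the indexings in Definitions \ref{def:R} and \ref{def:Rst} agree after taking unions, which is immediate from the definitions.
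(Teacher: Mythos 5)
Your proposal is correct and matches the paper's intent exactly: the paper states this corollary without proof as an immediate consequence of Theorem \ref{thm:mainn4} together with the direct sum decomposition of Lemma \ref{lem:directsum}, which is precisely the assembly argument you spell out. The verification that the unions of \eqref{eq:Rstright} and \eqref{eq:Rstleft} over all $(s,t)$ recover \eqref{eq:Rright} and \eqref{eq:Rleft}, and that the zigzag elements distribute over the summands, is indeed immediate from Definitions \ref{def:R}, \ref{def:Rst} and \ref{def:zigzagelement}.
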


\begin{corollary}    \samepage
The elements \eqref{eq:Rright} and \eqref{eq:Rleft} together form a basis
for $R$.
\end{corollary}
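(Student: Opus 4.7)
The plan is to deduce this corollary from the previous one (that \eqref{eq:Rstright} and \eqref{eq:Rstleft} form a basis for $R_{s,t}$ for each fixed $s,t$) together with the direct sum decomposition of $R$ given in Lemma \ref{lem:sumRst}.

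First I would observe the bookkeeping: the set \eqref{eq:Rright} is the disjoint union, over $0 \leq s,t \leq d$, of the sets \eqref{eq:Rstright}, since the latter is obtained from the former by fixing the values of $s$ and $t$; the analogous statement holds for \eqref{eq:Rleft} and \eqref{eq:Rstleft}. Moreover the elements in \eqref{eq:Rright} are distinct from the elements in \eqref{eq:Rleft}, because the former live in $e^*_s \ot D \ot e^*_t \ot D$ with the second tensor factor in $D$ (a power of $a$ disguised) and the third in $D^*$, while the latter live in $e^*_s \ot D \ot e^*_j \ot e_t$ with third factor a primitive idempotent; in either case the block structure from Lemma \ref{lem:directsum} keeps elements with different $(s,t)$ indices in distinct summands.

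Next I would invoke Lemma \ref{lem:sumRst} to write $R = \bigoplus_{s,t} R_{s,t}$, and the previous corollary to see that the elements \eqref{eq:Rstright} and \eqref{eq:Rstleft} together form a basis for $R_{s,t}$. Taking a basis of each summand in a direct sum and forming the union produces a basis for the whole space, so the union over $0 \leq s,t \leq d$ of the bases of the $R_{s,t}$ is a basis for $R$. By the first paragraph this union is precisely \eqref{eq:Rright} $\cup$ \eqref{eq:Rleft}, which completes the proof.

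There is no real obstacle here; the work has already been done in Theorem \ref{thm:mainn4}, which contained both the spanning argument (via the weight induction using Lemma \ref{lem:fij} and Lemma \ref{lem:weight}) and the dimension count (from Lemmas \ref{lem:sizeRst} and \ref{lem:sizeZZ}). The only thing to check carefully is that the index ranges line up so that the union of the local bases really equals the global set \eqref{eq:Rright} $\cup$ \eqref{eq:Rleft}, which is immediate from comparing Definitions \ref{def:R} and \ref{def:Rst}.
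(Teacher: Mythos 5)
Your proposal is correct and follows the same route the paper intends: the corollary is an immediate consequence of the preceding corollary (that \eqref{eq:Rstright} and \eqref{eq:Rstleft} form a basis for $R_{s,t}$) combined with the direct sum decomposition $R=\sum_{s,t}R_{s,t}$ from Lemma \ref{lem:sumRst}, with the union of the local index sets recovering \eqref{eq:Rright} and \eqref{eq:Rleft}. (Your parenthetical description of the tensor factors in the second paragraph misstates the block as $e^*_s \ot D \ot e^*_t \ot D$ rather than $e^*_s \ot D \ot D^* \ot e_t$, but this slip does not affect the argument.)
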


\begin{corollary}    \samepage
The zigzag elements in $D^* \ot D \ot D^* \ot D$ form a basis for
a complement of $R$ in $D^* \ot D \ot D^* \ot D$.
\end{corollary}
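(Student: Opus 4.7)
The plan is to reduce this corollary to the block-wise version already established in Corollary \ref{cor:main4}, by exploiting the two compatible direct sum decompositions we have in hand.

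First I would recall the relevant direct sum decompositions. By Lemma \ref{lem:directsum} we have
\[
 D^* \ot D \ot D^* \ot D = \bigoplus_{s=0}^d \bigoplus_{t=0}^d e^*_s \ot D \ot D^* \ot e_t,
\]
and by Lemma \ref{lem:sumRst} we have
\[
 R = \bigoplus_{s=0}^d \bigoplus_{t=0}^d R_{s,t}.
\]
Observe that every zigzag element of $D^* \ot D \ot D^* \ot D$ has the form $e^*_s \ot e_i \ot e^*_j \ot e_t$ and therefore belongs to exactly one summand $e^*_s \ot D \ot D^* \ot e_t$. Hence if $Z$ denotes the span of all zigzag elements of $D^* \ot D \ot D^* \ot D$ and $Z_{s,t}$ denotes the span of those zigzag elements that lie in $e^*_s \ot D \ot D^* \ot e_t$, then $Z = \bigoplus_{s,t} Z_{s,t}$, and moreover the zigzag elements themselves are linearly independent as a whole precisely because they are linearly independent within each block (by Corollary \ref{cor:main4}) and the blocks sum directly.

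Next I would apply Corollary \ref{cor:main4} block by block: for each pair $s,t$ we have
\[
 e^*_s \ot D \ot D^* \ot e_t = R_{s,t} \oplus Z_{s,t}.
\]
Summing these equalities over all $s,t$ and using the two direct sum decompositions above gives
\[
 D^* \ot D \ot D^* \ot D \;=\; \bigoplus_{s,t} (R_{s,t} \oplus Z_{s,t})
 \;=\; \Bigl(\bigoplus_{s,t} R_{s,t}\Bigr) \oplus \Bigl(\bigoplus_{s,t} Z_{s,t}\Bigr)
 \;=\; R \oplus Z.
\]
This shows that $Z$ is a complement of $R$ in $D^* \ot D \ot D^* \ot D$, and that the zigzag elements form a basis of it (being a basis of each $Z_{s,t}$ and the $Z_{s,t}$ summing directly).

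I do not anticipate any genuine obstacle: all of the analytic content, in particular the inductive reduction via weight using the polynomials $f^+_{ij}$ and $f^-_{ij}$, has already been carried out inside each block $e^*_s \ot D \ot D^* \ot e_t$ to prove Theorem \ref{thm:mainn4} and Corollary \ref{cor:main4}. The present corollary is just the assembly of these local statements into a global one, and the only thing to be careful about is that the decompositions of $D^* \ot D \ot D^* \ot D$ and of $R$ are compatibly indexed by the same pairs $(s,t)$, which Lemma \ref{lem:intersection} guarantees.
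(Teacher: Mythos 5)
Your proposal is correct and matches the paper's intended argument: the paper states this corollary without proof precisely because it follows from Corollary \ref{cor:main4} by assembling the block-wise statements over the direct sum decompositions of Lemma \ref{lem:directsum} and Lemma \ref{lem:sumRst}, exactly as you do. No issues.
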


\begin{corollary}    \samepage
The following {\rm (i)}--{\rm (iv)} hold.
\begin{itemize}
\item[\rm (i)]
For $0 \leq s,t \leq d$ the dimension of $R_{s,t}$ is 
$d(d+1)-(s+t)d+s^2+t^2$.
\item[\rm (ii)]
For $0 \leq s,t \leq d$ the codimension of $R_{s,t}$ in 
$e^*_s \ot D \ot D^* \ot e_t$ is $d+1+s(d-s)+t(d-t)$.
\item[\rm (iii)]
The dimension of $R$ is $2d(d+1)^2(d+2)/3$.
\item[\rm (iv)]
The codimension of $R$ in $D^* \ot D \ot D^* \ot D$ is 
$(d+1)^2(d^2+2d+3)/3$.
\end{itemize}
\end{corollary}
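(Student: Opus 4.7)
The plan is that all four items follow by straightforward counting once we invoke the basis results established just before this corollary. The work has essentially been done; we only need to assemble Lemmas \ref{lem:sizeRst}, \ref{lem:sizeZZ}, and the direct sum decomposition of Lemma \ref{lem:sumRst}, then carry out a short arithmetic computation.

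For part (i), I would use the corollary stating that the elements \eqref{eq:Rstright} and \eqref{eq:Rstleft} together form a basis for $R_{s,t}$. Consequently $\dim R_{s,t}$ is the sum of the sizes listed in Lemma \ref{lem:sizeRst}, namely
\[
 \bigl(d(d+1)/2 - t(d-t)\bigr) + \bigl(d(d+1)/2 - s(d-s)\bigr),
\]
which after expansion equals $d(d+1)-(s+t)d + s^2 + t^2$. For part (ii), by Corollary \ref{cor:main4} the zigzag elements form a basis for a complement of $R_{s,t}$ in $e^*_s \ot D \ot D^* \ot e_t$, so the claimed codimension is just the count in Lemma \ref{lem:sizeZZ}.

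For parts (iii) and (iv) I would use Lemma \ref{lem:directsum} together with Lemma \ref{lem:sumRst} to write
\[
 \dim R = \sum_{s=0}^d \sum_{t=0}^d \dim R_{s,t}, \qquad
 \dim(D^*\ot D \ot D^* \ot D) = (d+1)^4,
\]
and then plug in the formula from (i). The double sum splits into three parts, and using the elementary identities $\sum_{s=0}^d s = d(d+1)/2$ and $\sum_{s=0}^d s^2 = d(d+1)(2d+1)/6$ the computation collapses to $2d(d+1)^2(d+2)/3$, giving (iii). Finally, (iv) is obtained either by subtracting $\dim R$ from $(d+1)^4$, or equivalently by summing the codimensions from (ii) over all $s,t$; in either case an elementary simplification yields $(d+1)^2(d^2+2d+3)/3$.

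There is no real obstacle here. The only point requiring any care is keeping the arithmetic tidy; the cleanest factorization in (iii) is $d(d+1)^2[(d+1)-d+(2d+1)/3]$, which immediately reduces to $2d(d+1)^2(d+2)/3$, and the analogous factorization $(d+1)^2[3(d+1)^2-2d(d+2)]/3$ for (iv) reduces cleanly to $(d+1)^2(d^2+2d+3)/3$.
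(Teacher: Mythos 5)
Your proposal is correct and follows exactly the intended route: the paper states this corollary without proof as an immediate consequence of the preceding basis results (the corollary that \eqref{eq:Rstright}, \eqref{eq:Rstleft} form a basis for $R_{s,t}$, Corollary \ref{cor:main4}, and Lemmas \ref{lem:sizeRst}, \ref{lem:sizeZZ}, \ref{lem:sumRst}, \ref{lem:directsum}), plus the same elementary summations. Your arithmetic checks out in all four parts.
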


\medskip

We are done with our analysis of $D^* \ot D \ot D^* \ot D$.
In the next section we return to $T$.

\section{The zigzag words in $T$}

\indent
From now until the end of Section \ref{sec:proofofmain} the following 
notation will be in effect. 
Fix a nonnegative integer $d$. 
Fix a sequence $(\{\th_i\}_{i=0}^d; \{\th^*_i\}_{i=0}^d)$ of scalars 
taken from $\K$ that satisfies \eqref{eq:distinct} and Lemma \ref{lem:corec}.
Let $T$ denote the corresponding $\K$-algebra from Definition \ref{def:T}.
We will be discussing some special elements of $T$.
To facilitate this discussion we make some definitions.
Recall the idempotent generators $\{e_i\}_{i=0}^d$, $\{e^*_i\}_{i=0}^d$ for $T$.
We will call the $\{e^*_i\}_{i=0}^d$ {\em starred} and the $\{e_i\}_{i=0}^d$
{\em nonstarred}.
A pair of idempotent generators for $T$ will be called {\em alternating} 
whenever one of them is starred and the other is nonstarred.

\medskip

\begin{definition}   \label{def:word}  \samepage
For an integer $n\geq 0$, by a {\em word of length $n$ in $T$} we mean 
a product $x_1x_2 \cdots x_n$ such that $\{x_i\}_{i=1}^n$ are idempotent 
generators for $T$ and $x_{i-1}, x_i$ are alternating for $2 \leq i \leq n$. 
We interpret the word of length $0$ to be the identity of $T$. 
We call this word {\em trivial}. 
Let $x_1x_2\cdots x_n$ denote a nontrivial word in $T$. 
We say that this word {\em begins} with $x_1$ and {\em ends} with $x_n$.
\end{definition}

\begin{example}
For $d=2$ we display the words in $T$ that have length $3$ and
begin with $e_0$.
\begin{align*}
  & e_0e^*_0e_0, \qquad e_0e^*_0e_1, \qquad e_0e^*_0e_2,  \\
  & e_0e^*_1e_0, \qquad e_0e^*_1e_1, \qquad e_0e^*_1e_2,  \\
  & e_0e^*_2e_0, \qquad e_0e^*_2e_1, \qquad e_0e^*_2e_2.
\end{align*}
\end{example}

\medskip

Referring to Definition \ref{def:word}, observe that $T$ is spanned by
its words. We now define a special type of word said to be zigzag.

\medskip

\begin{definition}  \samepage
For an idempotent generator $e_i$ or $e^*_i$ we call $i$ the {\em index} 
of the generator.
For an idempotent generator $x$ let $\ox$ denote the index of $x$.
\end{definition}

\begin{definition}   \label{def:zz}   \samepage
A word $x_1x_2\cdots x_n$ in $T$ is said to be 
{\em zigzag} whenever both
\begin{itemize}
\item[(i)]
$\ox_i$ is not between $\ox_{i-1}$, $\ox_{i+1}$ for $2 \leq i \leq n-1$;
\item[(ii)]
at least one of $\ox_{i-1}$, $\ox_{i}$ is not between $\ox_{i-2}$, 
$\ox_{i+1}$ for $3 \leq i \leq n-1$.
\end{itemize}
\end{definition}

\medskip

In Definition \ref{def:zz} we defined the zigzag words in $T$.
In Definitions \ref{def:zigzagelement1}, \ref{def:zigzagelement} 
some other versions of zigzag were given.
We now compare Definitions \ref{def:zigzagelement1}, \ref{def:zigzagelement} and 
Definition \ref{def:zz}.

\medskip

\begin{lemma}    \label{lem:equiv1}   \samepage
For $0 \leq s,i,t \leq d$ the following {\rm (i)}, {\rm (ii)} are equivalent.
\begin{itemize}
\item[\rm (i)]
$e^*_s \otimes e_i \otimes e^*_t$ is zigzag in the sense of 
Definition {\rm \ref{def:zigzagelement1}}.
\item[\rm (ii)] 
The word $e^*_se_ie^*_t$ is zigzag in the sense of Definition 
{\rm \ref{def:zz}}.
\end{itemize}
\end{lemma}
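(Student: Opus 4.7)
The plan is to directly unfold both definitions when applied to the length-$3$ word $e^*_se_ie^*_t$ and observe that they impose exactly the same condition on the triple $(s,i,t)$. Write $x_1=e^*_s$, $x_2=e_i$, $x_3=e^*_t$, so that $\ox_1=s$, $\ox_2=i$, $\ox_3=t$ and $n=3$. The two clauses of Definition \ref{def:zz} must then be checked against this $n=3$ setup.

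First I would address clause (i) of Definition \ref{def:zz}. Since $n=3$, the index range $2\leq i\leq n-1$ reduces to the single value $i=2$, and the clause becomes the requirement that $\ox_2=i$ is not between $\ox_1=s$ and $\ox_3=t$. Next I would observe that clause (ii) of Definition \ref{def:zz} is vacuous: it quantifies over $3\leq i\leq n-1=2$, which is the empty range, and hence imposes no restriction at all. Consequently $e^*_se_ie^*_t$ is zigzag in the sense of Definition \ref{def:zz} precisely when $i$ is not between $s,t$.

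To finish, I would compare this with Definition \ref{def:zigzagelement1}, which declares $e^*_s\ot e_i\ot e^*_t$ to be zigzag exactly when $i$ is not between $s,t$. Both sides of the claimed equivalence have thus been reduced to the identical condition, giving (i)$\Leftrightarrow$(ii).

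There is no real obstacle here; the lemma is a short bookkeeping statement that reconciles the two notions of zigzag in the shortest nontrivial length to which both apply. The only point that requires any care is making explicit that clause (ii) of Definition \ref{def:zz} is vacuous when $n=3$, so that the apparently stronger definition collapses to the single ``betweenness'' condition used in Definition \ref{def:zigzagelement1}.
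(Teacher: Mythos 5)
Your proof is correct and matches the paper, which states this lemma without proof precisely because it reduces to the bookkeeping you carry out: for $n=3$ clause (i) of Definition \ref{def:zz} is exactly the condition that $i$ is not between $s,t$, and clause (ii) is vacuous. Nothing further is needed.
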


\begin{lemma}    \label{lem:equiv}   \samepage
For $0 \leq s,i,j,t \leq d$ the following {\rm (i)}, {\rm (ii)} are equivalent.
\begin{itemize}
\item[\rm (i)]
$e^*_s \otimes e_i \otimes e^*_j \otimes e_t$ is zigzag in the sense of 
Definition {\rm \ref{def:zigzagelement}}.
\item[\rm (ii)] 
The word $e^*_se_ie^*_je_t$ is zigzag in the sense of Definition 
{\rm \ref{def:zz}}.
\end{itemize}
\end{lemma}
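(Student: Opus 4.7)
The plan is to directly unpack Definition \ref{def:zz} for the specific word $w = e^*_s e_i e^*_j e_t$, read off the conditions it imposes, and check that they match verbatim the three conditions of Definition \ref{def:zigzagelement}. Since no hard analysis is required, the proof reduces to careful index bookkeeping.

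First I would set up notation: write $x_1 = e^*_s$, $x_2 = e_i$, $x_3 = e^*_j$, $x_4 = e_t$, so that $w = x_1 x_2 x_3 x_4$ has length $n = 4$ and indices $\ox_1 = s$, $\ox_2 = i$, $\ox_3 = j$, $\ox_4 = t$. The consecutive pairs alternate starred/nonstarred, so $w$ is indeed a word in the sense of Definition \ref{def:word}, and Definition \ref{def:zz} applies.

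Next I would enumerate the conditions that Definition \ref{def:zz} imposes for $n = 4$. Clause (i) of that definition ranges over $2 \leq k \leq n-1 = 3$ and requires $\ox_k$ to not lie between $\ox_{k-1}$ and $\ox_{k+1}$. Taking $k=2$ gives ``$i$ is not between $s,j$'' and taking $k=3$ gives ``$j$ is not between $i,t$'', which are precisely items (i) and (ii) of Definition \ref{def:zigzagelement}. Clause (ii) ranges over $3 \leq k \leq n-1 = 3$, so it reduces to the single requirement that at least one of $\ox_2, \ox_3$ is not between $\ox_1, \ox_4$, i.e. at least one of $i,j$ is not between $s,t$; this is item (iii) of Definition \ref{def:zigzagelement}.

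Thus the two conjunctive lists of conditions coincide termwise, proving the equivalence. There is essentially no obstacle here; the only thing to be careful about is not confusing the running index $i$ of Definition \ref{def:zz} with the fixed symbol $i$ denoting the index of $e_i$ in the word, which is why I would explicitly rename the Definition \ref{def:zz} index to $k$ when carrying out the check.
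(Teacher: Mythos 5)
Your proof is correct and matches the paper's (implicit) treatment: the paper states Lemma \ref{lem:equiv} without proof precisely because the check you carry out---instantiating Definition \ref{def:zz} at $n=4$ and observing that its clauses reduce termwise to conditions (i)--(iii) of Definition \ref{def:zigzagelement}---is immediate. Your renaming of the running index to avoid a clash with the fixed index $i$ of $e_i$ is a sensible precaution.
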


\medskip

We now describe the zigzag words in $T$ from several points of view.
In this description we will use the following notion. 
Two integers $a,b$ are said to have {\em opposite sign}
whenever $ab \leq 0$.

\medskip

\begin{theorem}       \label{thm:zz}   \samepage
Let $x_1x_2\cdots x_n$ denote a word in $T$.
Then this word is zigzag if and only if both
\begin{itemize}
\item[\rm (i)]
$\ox_{i-1}-\ox_{i}$ and $\ox_{i}-\ox_{i+1}$ have opposite sign
for $2 \leq i \leq n-1$;
\item[\rm (ii)]
for $2 \leq i \leq n-1$, if $\;|\ox_{i-1}-\ox_{i}|<|\ox_{i}-\ox_{i+1}|$
then
\[
 0 < |\ox_1-\ox_2| < |\ox_2-\ox_3| < \cdots < |\ox_{i}-\ox_{i+1}|.
\]
\end{itemize}
\end{theorem}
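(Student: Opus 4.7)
The plan is to translate each ``between'' clause into a concrete sign/absolute-value inequality on the successive differences $a_j = \ox_j - \ox_{j+1}$ ($1 \leq j \leq n-1$). A direct unpacking of Definition \ref{def:between} shows that an integer $r$ is not between $p,q$ if and only if $r = q$ or $(p-r)(r-q) < 0$. Specializing to $(p,r,q) = (\ox_{i-1}, \ox_i, \ox_{i+1})$, Definition \ref{def:zz}(i) at position $i$ becomes: $a_i = 0$ or $a_{i-1} a_i < 0$. An immediate consequence is that zeros propagate forward in the $a$-sequence, since substituting $a_j = 0$ into the condition at position $j+1$ forces $a_{j+1} = 0$. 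Hence if some $a_i$ is non-zero, then $a_1, \ldots, a_i$ are all non-zero and strictly alternate in sign.

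In this strictly alternating regime, with $a_i$ and $a_{i-2}$ having common sign $\varepsilon$ and $a_{i-1}$ having sign $-\varepsilon$, a short computation yields
\[
 (\ox_{i-2} - \ox_i)(\ox_i - \ox_{i+1}) = |a_i|\,(|a_{i-2}| - |a_{i-1}|), \qquad
 (\ox_{i-2} - \ox_{i-1})(\ox_{i-1} - \ox_{i+1}) = |a_{i-2}|\,(|a_i| - |a_{i-1}|).
\]
Consequently, ``$\ox_i$ is not between $\ox_{i-2}, \ox_{i+1}$'' is equivalent to $|a_{i-2}| < |a_{i-1}|$, and ``$\ox_{i-1}$ is not between $\ox_{i-2}, \ox_{i+1}$'' is equivalent to $|a_i| \leq |a_{i-1}|$.

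For the forward direction (zigzag $\Rightarrow$ Theorem \ref{thm:zz}), condition (i) is immediate: $a_i = 0$ or $a_{i-1} a_i < 0$ both imply $a_{i-1} a_i \leq 0$. For condition (ii), suppose $|a_{i-1}| < |a_i|$; then $a_i \neq 0$, and backward propagation of non-zeros gives that $a_1, \ldots, a_i$ are all non-zero and strictly alternating. Because $|a_i| > |a_{i-1}|$, the ``$\ox_{i-1}$ not between'' disjunct of Definition \ref{def:zz}(ii) at $i$ fails, so the ``$\ox_i$ not between'' disjunct must hold, yielding $|a_{i-2}| < |a_{i-1}|$. Applying this step recursively at positions $i, i-1, \ldots, 3$ produces $0 < |a_1| < |a_2| < \cdots < |a_i|$, as required.

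For the reverse direction (Theorem \ref{thm:zz} $\Rightarrow$ zigzag), Definition \ref{def:zz}(i) at $i$ fails only when $a_{i-1} = 0$ and $a_i \neq 0$; but then $|a_{i-1}| = 0 < |a_i|$, so Theorem \ref{thm:zz}(ii) forces $|a_{i-1}| > 0$, a contradiction. For Definition \ref{def:zz}(ii) at $i$, the case $a_i = 0$ is handled directly by the ``$\ox_i$ not between'' disjunct, so assume $a_i \neq 0$, ensuring strict alternation through position $i$. If Definition \ref{def:zz}(ii) failed at $i$, both translations would force $|a_{i-1}| \leq |a_{i-2}|$ and $|a_{i-1}| < |a_i|$; the latter triggers Theorem \ref{thm:zz}(ii), giving $|a_{i-2}| < |a_{i-1}|$ and contradicting the former. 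The main obstacle is the inherent asymmetry in ``not between $p,q$'' (degenerate at $r = q$ but not at $r = p$), which is what forces the one-sided form of Theorem \ref{thm:zz}(ii); carefully tracking the resulting strict-versus-weak inequalities through the sign analysis is the bulk of the work.
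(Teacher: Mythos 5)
Your proof is correct and takes essentially the same route as the paper's: the forward direction rests on zero-propagation in the difference sequence together with the recursive implication $|\ox_{j-1}-\ox_j|<|\ox_j-\ox_{j+1}| \Rightarrow |\ox_{j-2}-\ox_{j-1}|<|\ox_{j-1}-\ox_j|$, and the reverse direction is a proof by contradiction using hypothesis (ii). Your translation of ``not between'' into the sign of a product of consecutive differences simply systematizes the case analysis on orderings that the paper carries out by hand.
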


\begin{proof}
First assume $x_1x_2\cdots x_n$ is zigzag.
The assertion (i) follows from Definition \ref{def:zz}(i).
Concerning (ii), note by Definition \ref{def:zz}(i) that for $1 \leq j \leq n-1$,
$\ox_j =\ox_{j+1}$ implies $\ox_j=\ox_{j+1}=\cdots =\ox_{n}$.
Note by Definition \ref{def:zz}(i),(ii) that for $3 \leq j \leq n-1$,
$|\ox_{j-1}-\ox_j| < |\ox_{j}-\ox_{j+1}|$ implies
$|\ox_{j-2}-\ox_{j-1}| < |\ox_{j-1}-\ox_{j}|$.
Assertion (ii) follows from these comments.

Next assume (i), (ii) hold. 
To show that $x_1x_2 \cdots x_n$ is zigzag, we show that 
$x_1x_2\cdots x_n$ satisfies the two conditions in Definition \ref{def:zz}.
Concerning Definition \ref{def:zz}(i), pick an integer $i$ 
$(2 \leq i \leq n-1)$, and suppose $\ox_i$ is between $\ox_{i-1}$, $\ox_{i+1}$.
By Definition \ref{def:between}, either 
$\ox_{i-1}\geq \ox_i > \ox_{i+1}$ or $\ox_{i-1}\leq \ox_i < \ox_{i+1}$.
Observe that
$\ox_{i-1}= \ox_i > \ox_{i+1}$ contradicts (ii),
$\ox_{i-1}> \ox_i > \ox_{i+1}$ contradicts (i),
$\ox_{i-1}= \ox_i < \ox_{i+1}$ contradicts (ii), and
$\ox_{i-1}< \ox_i < \ox_{i+1}$ contradicts (i).
In any case there is a contradiction, 
so $\ox_i$ is not between $\ox_{i-1}, \ox_{i+1}$.
We have shown Definition \ref{def:zz}(i) is satisfied.
Concerning Definition \ref{def:zz}(ii), pick an integer $i$ 
$(3 \leq i \leq n-1)$, and suppose each of $\ox_{i-1}$, $\ox_i$ is between 
$\ox_{i-2}$, $\ox_{i+1}$.
By Definition \ref{def:between} and since $\ox_{i-1}$ is between 
$\ox_{i-2}$, $\ox_{i+1}$ we find $\ox_{i-2} \geq \ox_{i-1} > \ox_{i+1}$ or
$\ox_{i-2} \leq \ox_{i-1} < \ox_{i+1}$.
First assume $\ox_{i-2} \geq \ox_{i-1} > \ox_{i+1}$.
Using $\ox_{i-2} > \ox_{i+1}$ and the fact that $\ox_i$ is between 
$\ox_{i-2}$, $\ox_{i+1}$ we find $\ox_{i-2}\geq \ox_i > \ox_{i+1}$.
Since $\ox_i - \ox_{i+1} > 0$ and since $\ox_{i-1} - \ox_i$, $\ox_i-\ox_{i+1}$
have opposite sign, we find $\ox_{i-1}-\ox_i \leq 0$.
By this and since $\ox_{i-1} > \ox_{i+1}$ we find 
$|\ox_{i-1}-\ox_i| < |\ox_i-\ox_{i+1}|$. 
Now using assumption (ii) we obtain 
$|\ox_{i-2}-\ox_{i-1}| < |\ox_{i-1}-\ox_i|$.
But this implies $\ox_{i-2} < \ox_i$ which contradicts our above remarks.
Therefore at least one of $\ox_{i-1}$, $\ox_i$ is not between 
$\ox_{i-2}$, $\ox_{i+1}$.
We have shown Definition \ref{def:zz}(ii) is satisfied for the case
$\ox_{i-2} \geq \ox_{i-1} > \ox_{i+1}$.
The proof of Definition \ref{def:zz}(ii) for the case
$\ox_{i-2} \leq \ox_{i-1} < \ox_{i+1}$ is similar,
and omitted.
\end{proof}

\begin{definition}    \label{def:pq}  
A word $x_1x_2\cdots x_n$ in $T$ is said to be {\em constant} whenever the index
$\ox_i$ is independent of $i$ for $1 \leq i \leq n$.
Note that the trivial word is constant, and each constant word is zigzag.
\end{definition}

\begin{theorem}   \label{thm:p}  \samepage
Let $x_1x_2\cdots x_n$ denote a nonconstant zigzag word.
Then there exists a unique integer $p$ $(2 \leq p \leq n)$ such that both
\begin{itemize}
\item[\rm (i)]
 $0 < |\ox_1-\ox_2| < \cdots < |\ox_{p-1}-\ox_p|$;
\item[\rm (ii)]
 $|\ox_{p-1}-\ox_p| \geq |\ox_p-\ox_{p+1}| \geq \cdots \geq |\ox_{n-1}-\ox_n|$.
\end{itemize}
\end{theorem}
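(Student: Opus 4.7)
Abbreviate $d_i = |\ox_{i-1}-\ox_i|$ for $2 \leq i \leq n$, so the claim becomes: the sequence $d_2, d_3, \ldots, d_n$ has a unique index $p$ at which $0 < d_2 < d_3 < \cdots < d_p$ and $d_p \geq d_{p+1} \geq \cdots \geq d_n$. The plan is to read this structure directly off Theorem \ref{thm:zz}(ii), which is tailor-made for it: whenever a rise $d_i < d_{i+1}$ occurs (for some $i$ with $2 \leq i \leq n-1$), the entire prefix $0 < d_2 < d_3 < \cdots < d_{i+1}$ is strictly increasing. Informally, once the sequence weakly descends, it can never strictly ascend again.

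First I would verify that $d_2 > 0$. If $d_2 = 0$, then since the word is nonconstant we can choose the least $k \geq 3$ with $d_k > 0$; then $d_{k-1} = 0 < d_k$ with $2 \leq k-1 \leq n-1$, so Theorem \ref{thm:zz}(ii) applied at $i = k-1$ would force $0 < d_2$, a contradiction. Next I would define $p$ to be the largest index in $\{2,\ldots,n\}$ for which $d_2 < d_3 < \cdots < d_p$ holds (vacuously true for $p = 2$, so $p$ exists). Condition (i) is then immediate from the definition of $p$ together with $d_2 > 0$. For condition (ii), suppose some $j$ with $p \leq j \leq n-1$ satisfied $d_j < d_{j+1}$; Theorem \ref{thm:zz}(ii) would yield a strictly increasing chain $d_2 < d_3 < \cdots < d_{j+1}$, contradicting the maximality of $p$ since $j+1 > p$. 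Hence $d_p \geq d_{p+1} \geq \cdots \geq d_n$, establishing existence. The boundary cases $p = 2$ (where (i) is just $0 < d_2$) and $p = n$ (where (ii) is vacuous) cause no trouble.

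Uniqueness is a one-line contradiction: if $p < p'$ both satisfied (i) and (ii), then (i) for $p'$ gives $d_p < d_{p+1}$ while (ii) for $p$ gives $d_p \geq d_{p+1}$. There is no serious obstacle in this argument; Theorem \ref{thm:zz}(ii) is exactly the monotonicity principle needed, and the rest is a careful bookkeeping of indices. The only point requiring a little care is ruling out the degenerate possibility $d_2 = 0$, handled in the first step above.
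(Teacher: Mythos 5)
Your proof is correct and follows essentially the same route as the paper's: take $p$ maximal with $0<|\ox_1-\ox_2|<\cdots<|\ox_{p-1}-\ox_p|$, get (i) by definition, and deduce (ii) from Theorem \ref{thm:zz}(ii). The only cosmetic difference is that the paper rules out $\ox_1=\ox_2$ directly via Definition \ref{def:zz}(i) (equal consecutive indices force a constant tail) rather than via Theorem \ref{thm:zz}(ii), and it leaves uniqueness to the reader, which you spell out correctly.
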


\begin{proof}
Note that $n \geq 2$ since our word is nonconstant.
Note that $\ox_1 \not=\ox_2$;
otherwise $\ox_1=\ox_2 = \cdots = \ox_n$ by Definition \ref{def:zz}(i),
a contradiction. Therefore $0< |\ox_1-\ox_2|$.
Now define $p$ to be the maximal integer $i$ $(2 \leq i \leq n)$ such
that $0 < |\ox_1-\ox_2| < \cdots < |\ox_{i-1}-\ox_i|$.
Then (i) holds, and (ii) follows from Theorem \ref{thm:zz}(ii).
It is clear that $p$ is unique.
\end{proof}

\begin{example}
In the table below we list some nonconstant zigzag words of length $7$.
For each word we display the parameter $p$ from Theorem \ref{thm:p}.
\[
\begin{array}{cc|cc}
   \text{word} & & & p  \\
\hline
 e_{0}e^*_{1}e_{1}e^*_{1}e_{1}e^*_{1}e_{1} & & & 2   \\
 e_{0}e^*_{1}e_{0}e^*_{0}e_{0}e^*_{0}e_{0} & & & 2   \\
 e_{3}e^*_{1}e_{3}e^*_{2}e_{2}e^*_{2}e_{2} & & & 2   \\
 e_{0}e^*_{2}e_{1}e^*_{2}e_{1}e^*_{2}e_{1} & & & 2   \\
 e_{2}e^*_{3}e_{1}e^*_{1}e_{1}e^*_{1}e_{1} & & & 3   \\
 e_{4}e^*_{1}e_{5}e^*_{1}e_{2}e^*_{2}e_{2} & & & 3   \\
 e_{3}e^*_{4}e_{1}e^*_{5}e_{0}e^*_{6}e_{2} & & & 6   \\
 e_{3}e^*_{2}e_{4}e^*_{1}e_{5}e^*_{0}e_{0} & & & 6   \\ 
 e_{4}e^*_{3}e_{5}e^*_{2}e_{7}e^*_{0}e_{8} & & & 7
\end{array}
\]
\end{example}

\section{Reducing to zigzag words}
\label{sec:zzspan}

\indent
In this section we prove the following theorem.

\medskip

\begin{theorem}  \label{thm:span}  \samepage
For an integer $n\geq 1$ and idempotent generators $x,y$ for $T$, 
the following sets have the same span.
\begin{itemize}
\item[\rm (i)] 
The words of length $n$ in $T$, that begin with $x$ and end with $y$.
\item[\rm (ii)] 
The zigzag words of length $n$ in $T$, that begin with $x$ and end with $y$.
\end{itemize}
\end{theorem}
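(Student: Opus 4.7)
The plan is to proceed by induction on $n$. The base cases $n=1$ and $n=2$ are immediate, because Definition \ref{def:zz} imposes no nontrivial constraint on words of length at most $2$; every such word is zigzag.

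For $n \geq 3$, the essential tools are the length-3 and length-4 rewrites supplied by the preceding sections. From Lemma \ref{lem:piR1}, Corollary \ref{cor:zigzagDDD}, and Lemma \ref{lem:equiv1} (together with the obvious $D \ot D^* \ot D$ analogue of Section \ref{sec:DDD}), any length-3 word $e^*_s e_i e^*_t$ or $e_s e^*_i e_t$ in $T$ equals a $\K$-linear combination of length-3 zigzag words with the same first and last letters. Similarly, Lemma \ref{lem:piR}, Corollary \ref{cor:main4}, and Lemma \ref{lem:equiv} give that any length-4 word $e^*_s e_i e^*_j e_t$ (or dually $e_s e^*_i e_j e^*_t$) in $T$ equals a $\K$-linear combination of length-4 zigzag words with the same outer letters.

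Given a word $w = x_1 x_2 \cdots x_n$, the strategy is to detect positions where the conditions of Definition \ref{def:zz} fail and apply an appropriate local rewrite. If condition (i) fails at some interior position $i$, so that $\ox_i$ lies between $\ox_{i-1}$ and $\ox_{i+1}$, apply the length-3 rewrite to the sub-word $x_{i-1} x_i x_{i+1}$. If condition (ii) fails at some $i$, so that each of $\ox_{i-1}, \ox_i$ lies between $\ox_{i-2}, \ox_{i+1}$, apply the length-4 rewrite to $x_{i-2} x_{i-1} x_i x_{i+1}$. In each case the rewrite fixes the outer letters $x_1, x_n$ of $w$ and replaces the interior sub-word by a linear combination of zigzag sub-words, and the alternating pattern of starred and non-starred generators ensures that the appropriate version of the length-3 or length-4 relation is available.

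The principal obstacle, and the main technical content of the proof, is termination: replacing one or two interior letters can in principle reintroduce violations at the positions $i-2, i-1, i+1, i+2$, because the letters appearing in Definition \ref{def:zz}(i),(ii) at those positions have changed. To handle this, I would introduce a well-founded complexity measure $\mu(w)$ on words, modeled on the weight function used for length-4 words in Definition \ref{def:weight} and Theorem \ref{thm:mainn4}. Concretely, $\mu$ should be built as a lexicographic combination of local data along the gap sequence $(|\ox_1-\ox_2|, \ldots, |\ox_{n-1}-\ox_n|)$ together with the sign-change pattern, so that its minimal elements are precisely the zigzag profiles described by Theorem \ref{thm:zz} and Theorem \ref{thm:p} (strictly increasing gaps up to a turning point, then non-increasing, with signs alternating throughout), and so that each admissible length-3 or length-4 rewrite produces only terms of strictly smaller $\mu$-value. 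Verifying this strict-decrease property at each rewrite is the delicate point; once it is in hand, well-foundedness of $\mu$ furnishes termination, and an outer induction on $\mu$ combined with the length-3 and length-4 rewrites yields the theorem.
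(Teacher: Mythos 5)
Your treatment of $n\le 2$ and of the cases $n=3,4$ matches the paper exactly: the paper also deduces these from Lemma \ref{lem:piR1}, Corollary \ref{cor:zigzagDDD}, Lemma \ref{lem:equiv1} (length $3$) and from Lemma \ref{lem:piR}, Lemma \ref{lem:sumRst}, Corollary \ref{cor:main4}, Lemma \ref{lem:equiv} (length $4$). The problem is what happens for $n\ge 5$. There your argument has a genuine gap: the entire difficulty is concentrated in the termination measure $\mu$, and you never construct it. You correctly identify that a local length-$3$ or length-$4$ rewrite at position $i$ replaces the interior letters by new ones and can therefore create fresh violations of Definition \ref{def:zz} at the neighboring positions $i-2,i-1,i+1,i+2$; but the assertion that there is a lexicographic combination of the gap sequence and sign pattern that strictly decreases under \emph{every} admissible rewrite is precisely the statement that would need proof, and it is far from routine. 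The length-$4$ weight of Definition \ref{def:weight} is tailored to a fixed pair of outer indices $s,t$ and to rewrites that move only one interior letter at a time; it does not obviously globalize to words of arbitrary length where both outer letters of the rewritten window sit inside a larger context. As written, your proof defers exactly the hard part, so it cannot be accepted.

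It is worth noting that the paper avoids a rewriting-plus-termination argument altogether. For $n\ge 5$ it argues by contradiction: supposing some word in $W$ lies outside the span of $Z$, it uses the induction hypothesis to produce, for each interior position $k$ and each attainable index $r$ at that position, a ``witness'' $x_1\cdots x_n\in W\setminus\mathrm{Span}(Z)$ whose two halves $x_1\cdots x_k$ and $x_k\cdots x_n$ are each zigzag (Claim 1). It then studies the extremal values of the interior indices of witnesses (Claims 2--4), using Theorem \ref{thm:zz} on the zigzag halves, and derives the contradiction $s>t'$ versus $s\le t'$. This global, extremal-index argument is what replaces the well-founded measure you would need; if you want to salvage your local approach you must either exhibit $\mu$ explicitly and verify the strict decrease for every case of the length-$3$ and length-$4$ rewrites, or switch to an argument of the paper's type.
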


\begin{proof}
Without loss of generality we assume that $x$ is starred.
Note that $y$ is nonstarred if $n$ is even and starred if $n$ is odd.
Let $s$ (resp. $t$) denote the index of $x$ (resp. $y$).
Replacing $e_i$, $e^*_i$ by $e_{d-i}$, $e^*_{d-i}$ $(0 \leq i \leq d)$ 
if necessary, we may assume $s\leq t$.
Let $W$ (resp. $Z$) denote the set of words (resp. zigzag words) of length $n$ 
in $T$ that begin with $x$ and end with $y$.
By construction $W \supseteq Z$.
Our goal is to show that $W$ and $Z$ have the same span.
We will do this by induction on $n$.

The result holds for $n\leq 2$, since in this case $W=Z$. 
For $n=3$ the result is a routine consequence of 
Lemma \ref{lem:piR1}, Corollary \ref{cor:zigzagDDD}, and Lemma \ref{lem:equiv1}.
Next assume $n=4$.
By Corollary \ref{cor:main4} the set of zigzag elements in 
$e^*_s \ot D \ot D^* \ot e_t$ form a basis for a complement of $R_{s,t}$ in
$e^*_s \ot D \ot D^* \ot e_t$.
Let $\pi$ denote the map from Lemma \ref{lem:piR}.
By Lemma \ref{lem:piR} and Lemma \ref{lem:sumRst} the space $R_{s,t}$ is in 
the kernel of $\pi$. 
By construction $\pi$ sends $e^*_s \otimes D \otimes D^* \otimes e_t$
to the span of $W$.
By Lemma \ref{lem:equiv} the map $\pi$ sends the set of zigzag elements in 
$e^*_s \ot D \ot D^* \ot e_t$ to the set $Z$.
By these comments $W$ and $Z$ have the same span.

Next assume $n\geq 5$. 
By way of contradiction assume that $W$ and $Z$ do not have the same span, 
so $W \setminus \text{Span}(Z)$ is nonempty. 
Let $P$ denote the set of ordered pairs $(k,r)$ such that $2 \leq k \leq n-1$ 
and $0 \leq r \leq d$.
Pick any $(k,r) \in P$. By a {\em witness} for $(k,r)$ we mean a word
$x_1x_2 \cdots x_n \in W \setminus \text{Span}(Z)$ such that $\ox_k=r$.
Let $W(k,r)$ denote the set of witnesses for $(k,r)$.
We call $(k,r)$ {\em feasible} whenever $W(k,r)$ is nonempty.
Observe that for all integers $k$ $(2 \leq k \leq n-1)$ there exists an 
integer $r$ $(0 \leq r \leq d)$ such that $(k,r)$ is feasible.

\medskip

{\em Claim 1}.
{\em
For all feasible $(k,r) \in P$ there exists a word
$x_1x_2\cdots x_n \in W(k,r)$ such that each of $x_1x_2\cdots x_k$ and 
$x_kx_{k+1}\cdots x_n$ is zigzag.
}

\medskip

{\em Proof of Claim 1.}
Since $(k,r)$ is feasible there exists $y_1y_2 \cdots y_n \in W(k,r)$. 
By construction $y_1=x$, $y_n=y$, and $\oy_k=r$.
By induction $y_1y_2\cdots y_k$ is contained in the span of the zigzag words 
of length $k$ that begin with $x$ and end with $y_k$. 
Also by induction $y_ky_{k+1}\cdots y_n$ is contained in the span of the 
zigzag words of length $n-k+1$ that begin with $y_k$ and end with $y$.
It follows that there exists a word $x_1x_2\cdots x_n \in W(k,r)$ such that 
each of $x_1x_2\cdots x_k$ and $x_kx_{k+1}\cdots x_n$ is zigzag.
This proves Claim 1.

\medskip

{\em Claim 2}.
{\em
For all feasible $(k,r) \in P$ we have $r \geq s$.
}

\medskip

{\em Proof of Claim 2}.
Suppose the claim is false. 
Then there exists a feasible $(k,r) \in P$ such that $r<s$.
Without loss we may choose $(k,r)$ such that for all $(k',r') \in P$, 
$(k',r')$ is not feasible if (i) $r'<r$, or (ii) $r'=r$ and $k'<k$.
By Claim 1 there exists $x_1x_2\cdots x_n \in W(k,r)$ such that each of 
$x_1x_2\cdots x_k$ and $x_kx_{k+1}\cdots x_n$ is zigzag. 
We will show that $x_1x_2\cdots x_n$ is zigzag.
By construction $\ox_k=r$.
Also by construction $\ox_i>r$ for $1 \leq i \leq k-1$ and 
$\ox_i \geq r$ for $k+1 \leq i \leq n$.
Therefore $\ox_k$ is not between $\ox_i$, $\ox_j$ for $1 \leq i < k < j \leq n$.
We now show that $x_1x_2\cdots x_n$ is zigzag by showing that it
satisfies the two conditions of Definition \ref{def:zz}.
Concerning Definition \ref{def:zz}(i), 
for $2 \leq i \leq n-1$ we must show that
$\ox_i$ is not between $\ox_{i-1}$ and $\ox_{i+1}$. 
This is the case for $i \leq k-1$ since $x_1x_2\cdots x_k$ is zigzag,
it is the case for $i=k$ by our above comments,
and it is the case for $i \geq k+1$ since $x_kx_{k+1} \cdots x_n$ is zigzag.
Concerning Definition \ref{def:zz}(ii), 
for $3 \leq i \leq n-1$ we must show that at least one of 
$\ox_{i-1}$, $\ox_i$ is not between $\ox_{i-2}$ and $\ox_{i+1}$.
This is the case for $i \leq k-1$ since $x_1x_2\cdots x_k$ is zigzag,
it is the case for $k\leq i \leq k+1$ by our above comments,
and it is the case for  $i \geq k+2$ since $x_kx_{k+1} \cdots x_n$ is zigzag.
We have shown $x_1x_2\cdots x_n$ satisfies the conditions of 
Definition \ref{def:zz}.
Therefore $x_1x_2\cdots x_n$ is zigzag.
This is a contradiction since $x_1x_2\cdots x_n \in W(k,r)$.
This proves Claim 2.

\medskip
{\em Claim 3}.
{\em
For all feasible $(k,r) \in P$ we have $r < t$.
}

\medskip

{\em Proof of Claim 3}.
Suppose the claim is false.
Then there exists a feasible $(k,r) \in P$ such that $r \geq t$.
Without loss we may choose $(k,r)$ such that for all $(k',r') \in P$, 
$(k',r')$ is not feasible if (i) $r'>r$, or (ii) $r'=r$ and $k'<k$.
By Claim 1 there exists $x_1x_2\cdots x_n \in W(k,r)$ such that each of 
$x_1x_2\cdots x_k$ and $x_kx_{k+1}\cdots x_n$ is zigzag. 
We will show that $x_1x_2\cdots x_n$ is zigzag.
Suppose for the moment that $r=s$.
In this case $s=t$ since we assume $s\leq t \leq r$.
Now by the construction and Claim 2 the word $x_1x_2\cdots x_n$ is constant 
and hence zigzag.
Next assume $r>s$.
By construction $\ox_k=r$.
Also by construction $\ox_i<r$ for $1 \leq i \leq k-1$ and 
$\ox_i \leq r$ for $k+1 \leq i \leq n$.
Therefore $\ox_k$ is not between $\ox_i$, $\ox_j$ for $1 \leq i < k < j \leq n$.
We now show that $x_1x_2\cdots x_n$ is zigzag, by showing that it satisfies 
the two conditions of Definition \ref{def:zz}.
Concerning Definition \ref{def:zz}(i), 
for $2 \leq i \leq n-1$ we must show that $\ox_i$ is not between 
$\ox_{i-1}$ and $\ox_{i+1}$. 
This is the case for $i \leq k-1$ since $x_1x_2\cdots x_k$ is zigzag,
it is the case for $i=k$ by our above comments,
and it is the case for $i \geq k+1$ since $x_kx_{k+1} \cdots x_n$ is zigzag.
Concerning Definition \ref{def:zz}(ii), 
for $3 \leq i \leq n-1$ we must show that at least one of 
$\ox_{i-1}$, $\ox_i$ is not between $\ox_{i-2}$, $\ox_{i+1}$.
This is the case for $i \leq k-1$ since $x_1x_2\cdots x_r$ is zigzag,
it is the case for $k\leq i \leq k+1$ by our above comments,
and it is the case for  $i \geq k+2$ since $x_kx_{k+1} \cdots x_n$ is zigzag.
We have shown $x_1x_2\cdots x_n$ satisfies the conditions of 
Definition \ref{def:zz}.
Therefore $x_1x_2\cdots x_n$ is zigzag.
This is a contradiction since $x_1x_2\cdots x_n \in W(k,r)$.
This proves Claim 3.

\medskip

Let $t'$ denote the minimal integer $r$ $(0 \leq r \leq d)$ such that 
$(n-1,r)$ is feasible.
Observe that $s\leq t'$ by Claim 2 and $t' < t$ by Claim 3.
Let $P'$ denote the set of ordered pairs $(k,r)$ such that $2 \leq k \leq n-2$ 
and $0 \leq r\leq d$.

\medskip

{\em Claim 4}.
{\em
For all feasible $(k,r) \in P'$ we have $r>t'$.
}

\medskip

{\em Proof of Claim 4}.
By Claim 1 and since $P' \subseteq P$, there exists a word 
$x_1x_2\cdots x_n \in W(k,r)$ such that $x_kx_{k+1}\cdots x_n$ is zigzag.
By construction $\ox_k = r$ and $\ox_n = t$.
By Claim 3 $\ox_{n-2} < t$ and $\ox_{n-1} <  t$.
Also $\ox_{n-1} \geq  t'$ by the definition of $t'$.
Since $x_kx_{k+1}\cdots x_n$ is zigzag and $k\leq n-2$, 
the integer $\ox_{n-1}$ is not between $\ox_{n-2}$ and $\ox_{n}$.
By this and since $\ox_{n-1} < \ox_n$ we find $\ox_{n-2} > \ox_{n-1}$.
Now $\ox_{n-1} < \ox_{n-2} < \ox_n$ so 
$|\ox_{n-2} - \ox_{n-1}| < |\ox_{n-1} - \ox_n|$.
With this in mind we apply Theorem \ref{thm:zz} to $x_kx_{k+1}\cdots x_n$
and get $\ox_k > \ox_{n-1}$.
Recall $\ox_k =r$ and $\ox_{n-1} \geq t'$ so $r>t'$.
This proves Claim 4.

\medskip

By construction the pair $(n-1,t')$ is a feasible element of $P$. 
Now by Claim 1 there exists $x_1x_2\cdots x_n \in W(n-1,t')$ such that
$x_1x_2\cdots x_{n-1}$ is zigzag.
By construction $\ox_1 = s$ and $\ox_{n-1} = t'$.
By Claim 4 and since $n\geq 5$ we have $\ox_{n-2} > \ox_{n-1}$ and
$\ox_{n-3} > \ox_{n-1}$.
Since $x_1x_2\cdots x_{n-1}$ is zigzag, the integer $\ox_{n-2}$ is not between
$\ox_{n-3}$ and $\ox_{n-1}$.
By this and since $\ox_{n-2} > \ox_{n-1}$ we find $\ox_{n-3} < \ox_{n-2}$.
Therefore $\ox_{n-1} < \ox_{n-3} < \ox_{n-2}$ so 
$|\ox_{n-3} - \ox_{n-2}| < |\ox_{n-2} - \ox_{n-1}|$.
With this in mind we apply Theorem \ref{thm:zz} to $x_1x_2\cdots x_{n-1}$
and get $\ox_1 > \ox_{n-1}$.
Recall $\ox_1 = s$ and $\ox_{n-1}=t'$ so $s> t'$. 
This contradicts our earlier statement that $s \leq t'$.
We conclude that $W$ and $Z$ have the same span, and the theorem is proved.
\end{proof}

\section{The proof of Theorem \ref{thm:main}}
\label{sec:proofofmain}

\indent
In this section we prove Theorem \ref{thm:main}. We will use
the following concepts.

\begin{definition}
A word in $T$ will be called {\em lifting} whenever it is nontrivial, 
zigzag, and ends with $e_0$ or $e^*_0$.
Let  $x_1 x_2 \cdots x_n$ denote a lifting word in $T$. 
This word will be called {\em redundant} whenever there exists an integer 
$i$ $(1 \leq i \leq n-1)$ such that $\ox_i=0$.
We call this word {\em nonredundant} whenever it is not redundant.
\end{definition}

\begin{example}
For $d=3$ we display the nonredundant lifting words in $T$ that begin with 
a nonstarred element.
\[
 \begin{array}{ccc}
  & e_0 \\
  e_1e^*_0 & e_1e^*_2e_0 & e_1e^*_3e_0  \\
  e_2e^*_0 & e_2e^*_3e_0 & e_2e^*_1e_3e^*_0  \\
  & e_3e^*_0
 \end{array}
\]
\end{example}

\begin{lemma}    \label{lem:ds}   \samepage
For $0 \leq s \leq d$ there are exactly $\binom{d}{s}$ nonredundant lifting 
words in $T$ that begin with $e_s$.
\end{lemma}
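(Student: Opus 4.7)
The plan is to establish a bijection between nonredundant lifting words of $T$ that begin with $e_s$ and subsets $M\subseteq\{1,2,\ldots,d\}$ whose ``alternating sum'' $\max M - (\text{next largest}) + \cdots$ equals $s$; a standard counting identity then gives $\binom{d}{s}$.

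The case $s=0$ is immediate: any lifting word has length $\geq 1$, so if it begins with $e_0$ then either it equals $e_0$ or it has length $\geq 2$ with first index $0$, violating nonredundancy. This accounts for $\binom{d}{0}=1$. From now on assume $s\geq 1$, and write a nonredundant lifting word as $x_1x_2\cdots x_n$ with indices $a_i:=\ox_i$, so $a_1=s$, $a_n=0$, and $1\leq a_j\leq d$ for $1\leq j\leq n-1$; note $n\geq 2$ since $a_1\neq a_n$.

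The main technical step is to show that the absolute differences $m_i:=|a_i-a_{i+1}|$ $(1\leq i\leq n-1)$ are strictly increasing positive integers bounded by $d$. Observe first that $m_{n-1}=a_{n-1}\in\{1,\ldots,d\}$. When $n\geq 3$, applying Definition \ref{def:zz}(i) at position $n-1$ shows that $a_{n-1}$ is not between $a_{n-2}$ and $0$; together with $a_{n-1}\geq 1$ this forces $a_{n-2}<a_{n-1}$, so $m_{n-2}=a_{n-1}-a_{n-2}\leq a_{n-1}-1<m_{n-1}$. Then Theorem \ref{thm:zz}(ii) propagates the strict ascent to all earlier positions, giving $0<m_1<m_2<\cdots<m_{n-1}$. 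Because each $m_i>0$, Theorem \ref{thm:zz}(i) upgrades to a strict sign-alternation of the $a_i-a_{i+1}$, and the sign at position $n-1$ is positive (since $a_{n-1}>0=a_n$); hence $\operatorname{sign}(a_i-a_{i+1})=(-1)^{n-1-i}$. This pins down the word from $M:=\{m_1,\ldots,m_{n-1}\}$ via $a_i=\sum_{j=i}^{n-1}(-1)^{n-1-j}m_j$, and in particular
\[
s=a_1=m_{n-1}-m_{n-2}+m_{n-3}-\cdots\pm m_1=:\operatorname{alt}(M).
\]

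Next I would check that the map word $\mapsto M$ is a bijection onto the set of subsets of $\{1,\ldots,d\}$ with alternating sum $s$. Given such $M=\{m_1<\cdots<m_k\}$, setting $n=k+1$ and defining $a_i$ by the displayed formula, I would verify the conditions to be a nonredundant lifting word: pairing the terms as $(m_{n-1}-m_{n-2})+(m_{n-3}-m_{n-4})+\cdots$ from the top yields $a_i\geq 1$ for $i<n$, while regrouping as $m_{n-1}-(m_{n-2}-m_{n-3})-\cdots$ yields $a_i\leq m_{n-1}\leq d$; the starred/nonstarred alternation is dictated by the starting type $e_s$ and the length $n$; the strictly alternating signs and strictly increasing absolute values of the differences let Theorem \ref{thm:zz} certify zigzag; nontriviality, ending at index $0$, and nonredundancy are immediate.

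Finally I would count subsets of $\{1,\ldots,d\}$ with alternating sum $s$. Writing $N_d(s)$ for this count and splitting on whether $d$ belongs to the subset, adjoining $d$ to a subset of $\{1,\ldots,d-1\}$ of alternating sum $t$ produces a subset of alternating sum $d-t$, giving the recursion $N_d(s)=N_{d-1}(s)+N_{d-1}(d-s)$; induction together with $\binom{d-1}{d-s}=\binom{d-1}{s-1}$ and Pascal's rule collapses this to $\binom{d}{s}$, completing the proof. The main obstacle is the strict-ascent argument at the start: pinning down $m_{n-2}<m_{n-1}$ via the zigzag axiom is the linchpin, after which the rest of the argument is clean bookkeeping of signs and a textbook alternating-sum identity.
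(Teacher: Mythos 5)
Your proof is correct, but the bijection you use is genuinely different from the paper's. The paper also argues bijectively, but it maps a nonredundant lifting word $w=x_1\cdots x_n$ directly to an $s$-element subset of $\{1,\ldots,d\}$: using the interleaving $0<\ox_{n-2}<\ox_{n-4}<\cdots<\ox_1<\cdots<\ox_{n-3}<\ox_{n-1}$ (obtained from Theorem \ref{thm:p}), it forms the sets $X^+$ and $X^-$ of indices occurring at alternate positions, notes $X^+\subseteq\{\ox_1+1,\ldots,d\}$, $X^-\subseteq\{1,\ldots,\ox_1\}$ with $|X^+|=|X^-|$, and sends $w$ to $X^+\cup(\{1,\ldots,\ox_1\}\setminus X^-)$, so the count $\binom{d}{s}$ is immediate. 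You instead encode $w$ by the set $M$ of consecutive index differences $|\ox_i-\ox_{i+1}|$, which lands on subsets of $\{1,\ldots,d\}$ with alternating sum $s$, and you then need the additional (classical, Pascal-recursion) identity that these number $\binom{d}{s}$. The two encodings carry the same information --- your strictly increasing differences with alternating signs are equivalent to the paper's interleaving chain --- but the paper's choice of subset makes the enumeration a one-liner, while yours trades that for an extra counting lemma. A point in your favor: your explicit derivation that $\ox_{n-2}<\ox_{n-1}$ forces $p=n$ in Theorem \ref{thm:p} (so that the differences increase all the way to the end) is exactly the step the paper's phrase ``Using Theorem \ref{thm:p} we find'' quietly relies on, and you are right to flag it as the linchpin; the paper leaves both this and the bijectivity check to the reader.
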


\begin{proof}
Let $L$ denote the set of all nonredundant lifting words in $T$ 
that begin with a nonstarred element. 
Abbreviate $\Omega$ for the set $\{1,2,\ldots, d\}$,
and let $2^\Omega$ denote the set of all subsets of $\Omega$.
We will display a bijection $f:L \to 2^\Omega$ with the property
that for all $x_1x_2\cdots x_n \in L$ the image under $f$ has cardinality $\ox_1$.
To this end pick any $w=x_1 x_2 \cdots x_n \in L$. 
We will define $f(w)$ after a few comments.
Using Theorem \ref{thm:p} we find
\[
  0 < \ox_{n-2} < \ox_{n-4} < \cdots < \ox_4 < \ox_2 
     < \ox_1 < \ox_3 < \cdots < \ox_{n-3} < \ox_{n-1}
\]
if $n$ is even, and
\[
  0 < \ox_{n-2} < \ox_{n-4} < \cdots < \ox_3 < \ox_1 
      < \ox_2 < \ox_4 < \cdots < \ox_{n-3} < \ox_{n-1}
\]
if $n$ is odd.
Define
\begin{align*}
  X^+ &=  \{\ox_i \,|\, 2 \leq i \leq n-1,\; \text{ $n-i\;$ odd}\},   \\
  X^- &=  \{\ox_i \,|\, 1 \leq i \leq n-2,\; \text{ $n-i\;$ even}\}.
\end{align*}
Observe that
\begin{align*}
  X^+ &\subseteq \{\ox_1+1, \ox_1+2, \ldots, d\},  \\
  X^- &\subseteq \{1,2,\ldots, \ox_1\}.
\end{align*}
Further observe that each of $|X^+|$, $|X^-|$ is equal to
$(n-2)/2$ if $n$ is even and $(n-1)/2$ if n is odd.
Therefore $|X^+|=|X^-|$.
We define
\[
  f(w) = X^+ \cup (\{1,2,\ldots,\ox_1\} \setminus X^-).
\]
By the above comments $f(w)$ is a subset of $\Omega$ with cardinality $\ox_1$.
One checks that the map $f:L \to 2^\Omega$ is bijective and the result follows.
\end{proof}

\begin{proposition}     \label{prop:esTes0}  \samepage
For $0 \leq s \leq d$ we have
\begin{equation}    \label{eq:aux}
  e_sTe^*_0 = \sum_{w} w e_0 T e^*_0  + \sum_{w'} w' e^*_0 T e^*_0,
\end{equation}
where the first sum (resp. second sum) is over all the nonredundant lifting 
words $w$ (resp. $w'$) in $T$ that begin with $e_s$ and end with $e_0$ 
(resp. end with $e^*_0$).
\end{proposition}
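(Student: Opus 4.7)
The plan is to use Theorem \ref{thm:span} to reduce the claim to a statement about zigzag words, and then to factor each such zigzag word at its earliest occurrence of a generator with index $0$.

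First I would dispatch the easy inclusion. If $w$ is any nonredundant lifting word beginning with $e_s$, then $e_s w = w$ from $e_s^2 = e_s$, so both $w \, e_0 T e^*_0$ and $w \, e^*_0 T e^*_0$ sit inside $e_s T e^*_0$. Hence the right-hand side of \eqref{eq:aux} is contained in the left-hand side.

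For the reverse inclusion, I would note that $T$ is spanned by its words (Definition \ref{def:word}), so $e_s T e^*_0$ is spanned by elements of the form $e_s \cdot (\text{word}) \cdot e^*_0$. After collapsing using the idempotent relations \eqref{eq:eiej}, each such element is a scalar multiple of a word in $T$ beginning with $e_s$ and ending with $e^*_0$, or else is zero. By Theorem \ref{thm:span}, the span of these words equals the span of the \emph{zigzag} words with the same endpoints. Thus it suffices to show that every zigzag word $x_1 x_2 \cdots x_n$ beginning with $e_s$ and ending with $e^*_0$ lies in the right-hand side of \eqref{eq:aux}.

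Given such a zigzag word, the index set $\{i : 1 \leq i \leq n,\ \ox_i = 0\}$ is nonempty since $\ox_n = 0$; let $i$ be its minimum. Set $w = x_1 x_2 \cdots x_i$ and $z = x_i x_{i+1} \cdots x_n$. The prefix $w$ is a word (it inherits alternation), is nontrivial, is zigzag because the conditions of Definition \ref{def:zz} involve only consecutive triples and quadruples of indices, ends at an idempotent generator of index $0$, and is nonredundant by the minimality of $i$; hence $w$ is a nonredundant lifting word beginning with $e_s$. Using $x_i^2 = x_i$, we have $x_1 x_2 \cdots x_n = w \cdot z$, where $z$ begins with $x_i \in \{e_0, e^*_0\}$ and ends with $e^*_0$. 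If $x_i = e_0$ then $z \in e_0 T e^*_0$; if $x_i = e^*_0$ then $z \in e^*_0 T e^*_0$. In either case $x_1 x_2 \cdots x_n$ lies in the right-hand side of \eqref{eq:aux}, completing the proof.

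The main (and only real) obstacle here is the passage from arbitrary words to zigzag words, which is precisely the content of Theorem \ref{thm:span}; once that reduction is available, the rest is the bookkeeping described above.
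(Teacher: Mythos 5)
Your proof is correct and follows essentially the same route as the paper's: reduce to zigzag words via Theorem \ref{thm:span}, cut each such word at the first occurrence of an index-$0$ generator, and observe that the prefix is a nonredundant lifting word while the suffix lies in $e_0Te^*_0$ or $e^*_0Te^*_0$. Your added justifications (the prefix inherits the zigzag property because Definition \ref{def:zz} is a local condition, and the collapsing of $e_s\cdot(\text{word})\cdot e^*_0$ into a word) are correct and only make explicit what the paper leaves implicit.
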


\begin{proof}
In \eqref{eq:aux} the right-hand side is contained in the left-hand side
since each summand is contained in $e_sTe^*_0$.
We now show that the left-hand side is contained in the right-hand side.
By Theorem \ref{thm:span} $e_sTe^*_0$ is spanned by the zigzag words in $T$
that begin with $e_s$ and end with $e^*_0$.
Let $x_1x_2 \cdots x_n$ denote such a word.
We show that $x_1x_2 \cdots x_n$ is contained in the right-hand side of
\eqref{eq:aux}.
By construction $x_n=e^*_0$ so $\ox_n = 0$.
Define
\[
   j = \min\{i \,|\, 1 \leq i \leq n,\; \ox_i = 0\}.
\]
Observe $\ox_j =0$, so $x_j=e_0$ or $x_j = e^*_0$.
First assume $x_j=e_0$, and abbreviate $w = x_1x_2 \cdots x_j$.
By construction $w$ is a nonredundant lifting word that begins
with $e_s$ and ends with $e_0$. 
Observe that $x_1 x_2 \cdots x_n$ = $w x_jx_{j+1} \cdots x_n$ and
$x_jx_{j+1} \cdots x_n \in e_0 T e^*_0$, so $x_1 x_2 \cdots x_n \in w e_0Te^*_0$.
By these comments $x_1 x_2 \cdots x_n$ is contained in the right-hand side of 
\eqref{eq:aux}.
Next assume $x_j=e^*_0$, and abbreviate $w' = x_1x_2 \cdots x_j$.
By construction $w'$ is a nonredundant lifting word that begins with $e_s$ 
and ends with $e^*_0$. 
Observe that $x_1 x_2 \cdots x_n = w' x_jx_{j+1} \cdots x_n$ and
$x_jx_{j+1} \cdots x_n \in e^*_0 T e^*_0$, 
so $x_1 x_2 \cdots x_n \in w' e^*_0Te^*_0$.
By these comments $x_1 x_2 \cdots x_n$ is contained in the right-hand side 
of \eqref{eq:aux}. The result follows..
\end{proof}

\begin{proofof}{Theorem \ref{thm:main}}
Let $A,A^*$ denote the tridiagonal pair in question, and let 
$(A;\{E_i\}_{i=0}^d;A^*$; $\{E^*_i\}_{i=0}^d)$ denote an associated 
tridiagonal system. Let $V$ denote the underlying vector space. 
Following Lemma \ref{lem:T-module} we view $V$ as an irreducible $T$-module, 
where $T$ is from that lemma. 
For $0 \leq s \leq d$ we show $\rho_s \leq \rho_0 \binom{d}{s}$.
To this end we apply each term in \eqref{eq:aux} to $V$, and evaluate 
the results as follows.
Observe $V=Te^*_0V$ since the $T$-module $V$ is irreducible and 
$e^*_0V \not=0$. Therefore $e_sV=e_sTe^*_0V$. 
By construction $e_0Te^*_0V \subseteq e_0V$ and 
$e^*_0Te^*_0V \subseteq e^*_0V$. 
This yields
\begin{equation}     \label{eq:star}
     e_s V \subseteq \sum_{w} w e_0V + \sum_{w'} w' e^*_0V,
\end{equation}
where the first sum (resp. second sum) is over all the nonredundant lifting 
words $w$ (resp. $w'$) that begin with $e_s$ and end with $e_0$ 
(resp. end with $e^*_0$).
In \eqref{eq:star} we consider the dimension of each side.
The dimension of $e_sV$ is $\rho_s$.
In the right-hand side of \eqref{eq:star} there are a total of
$\binom{d}{s}$ summands, by Lemma \ref{lem:ds}.
Each summand has dimension at most $\rho_0$, since $e_0V$ and $e^*_0V$ have 
dimension $\rho_0$.
Therefore in \eqref{eq:star} the right-hand side has dimension at most
$\rho_0 \binom{d}{s}$. It follows that $\rho_s \leq \rho_0 \binom{d}{s}$.
\end{proofof}

\section{Remarks}
\label{sec:conj}

\indent
In this section we give some remarks and suggestions for future research.

\medskip

Fix a nonnegative integer $d$. 
Fix a sequence
$(\{\th_i\}_{i=0}^d$; $\{\th^*_i\}_{i=0}^d)$ of scalars
taken from $\K$ that satisfies \eqref{eq:distinct} and
Lemma {\rm \ref{lem:corec}}.
Let $T$ denote the corresponding $\K$-algebra from Definition \ref{def:T}.
Observe that $e^*_0Te^*_0$ is a $\K$-algebra with multiplicative
identity $e^*_0$.  
In \cite[Theorem 2.6]{NT:structure} we proved that this algebra is generated by
$e^*_0De^*_0$, where $D$ is from Definition \ref{def:D}. 
We now give an alternative proof of this fact using Theorem \ref{thm:span}.
The following notation will be useful. For subsets $X$, $Y$ of $T$ 
let $XY$ denote the $\K$-subspace of $T$ spanned by
$\{xy \,|\, x \in X,\, y \in Y \}$.

\begin{lemma}  {\rm \cite[Proposition 4.6]{NT:structure}} 
 \label{lem:eDDDe}    \samepage
For an integer $n\geq 1$ the space
\begin{equation}   \label{eq:space2}
  e^*_0 D D^* D D^* D \cdots D D^* D e^*_0
    \qquad\qquad (\text{$n$ $D$'s})
\end{equation}
is equal to
\begin{equation}   \label{eq:space3}
  e^*_0 D e^*_0 D e^*_0 D \cdots D e^*_0 D e^*_0
        \qquad\qquad (\text{$n$ $D$'s}).
\end{equation}
\end{lemma}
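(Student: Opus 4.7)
The plan is to apply Theorem \ref{thm:span} to reduce from arbitrary words to zigzag words, and then show that any zigzag word of the appropriate length that is bounded by $e^*_0$ on both ends has $e^*_0$ in every starred position. The inclusion of \eqref{eq:space3} in \eqref{eq:space2} is immediate from $e^*_0 \in D^*$, so I focus on the reverse containment.

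Using the basis $\{e_i\}_{i=0}^d$ for $D$ and $\{e^*_j\}_{j=0}^d$ for $D^*$ from Lemma \ref{lem:basisD}, the space \eqref{eq:space2} is spanned by the words
$e^*_0 e_{i_1} e^*_{j_1} e_{i_2} \cdots e^*_{j_{n-1}} e_{i_n} e^*_0$
of length $2n+1$ beginning and ending with $e^*_0$. By Theorem \ref{thm:span}, these have the same span as the \emph{zigzag} words of length $2n+1$ beginning and ending with $e^*_0$. So the lemma will follow once we prove the structural claim: every zigzag word $x_1 x_2 \cdots x_{2n+1}$ with $x_1 = x_{2n+1} = e^*_0$ satisfies $x_i = e^*_0$ for every odd $i$. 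Granting this, each such zigzag word takes the form $e^*_0 e_{i_1} e^*_0 e_{i_2} e^*_0 \cdots e^*_0 e_{i_n} e^*_0$, which is a spanning element of \eqref{eq:space3}.

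To prove the structural claim, the constant case is trivial, so assume $x_1 x_2 \cdots x_m$ is nonconstant (with $m = 2n+1$) and set $d_i = |\ox_i - \ox_{i+1}|$. I would first use the note from the proof of Theorem \ref{thm:zz} that any equality $\ox_i = \ox_{i+1}$ propagates to force $\ox_i = \ox_{i+1} = \cdots = \ox_m = 0$. Combined with Theorem \ref{thm:zz}(i), this allows one to check inductively that the successive increments $\ox_{i+1} - \ox_i$ strictly alternate in sign starting from $\ox_2 - \ox_1 = d_1 > 0$, yielding the closed form
\[
 \ox_k = d_1 - d_2 + d_3 - d_4 + \cdots + (-1)^k d_{k-1} \qquad (1 \leq k \leq m).
\]
The nonnegativity $\ox_3 \geq 0$ then forces $d_2 \leq d_1$, so the parameter $p$ of Theorem \ref{thm:p} must equal $2$, and part (ii) of that theorem yields $d_1 \geq d_2 \geq \cdots \geq d_{m-1}$. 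Imposing $\ox_m = 0$ (with $m$ odd) in the closed form gives
\[
 (d_1 - d_2) + (d_3 - d_4) + \cdots + (d_{m-2} - d_{m-1}) = 0,
\]
whose summands are all nonnegative by monotonicity, hence all zero; so $d_{2j-1} = d_{2j}$ for each relevant $j$. Plugging this back into the closed form shows $\ox_k = 0$ at every odd $k$, establishing the structural claim.

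The hard part will be the structural claim; once it is in hand, the rest is essentially bookkeeping, invoking Theorem \ref{thm:span} together with Lemma \ref{lem:basisD}. The subtle point within the claim is ensuring the alternating-sign formula for $\ox_k$ is valid even when some $d_i$ may vanish, which is exactly what the propagation observation from the proof of Theorem \ref{thm:zz} lets us handle uniformly.
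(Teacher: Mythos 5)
Your proposal is correct and follows the paper's own proof essentially step for step: both reduce the space \eqref{eq:space2} to the span of zigzag words of length $2n+1$ beginning and ending with $e^*_0$ via Theorem \ref{thm:span}, and then invoke Theorem \ref{thm:p} to conclude that $\ox_i=0$ at every odd position. The paper leaves that last step as a one-line appeal to Theorem \ref{thm:p}, whereas your alternating-sum argument (with the careful handling of vanishing increments via the propagation observation) is a correct and complete elaboration of it.
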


\begin{proof}
By construction the space \eqref{eq:space3} is contained in the space 
\eqref{eq:space2}.
We show that the space \eqref{eq:space2} is contained in the space 
\eqref{eq:space3}.
The space \eqref{eq:space2} is spanned by the words of length $2n+1$ that
begin and end with $e^*_0$. By Theorem \ref{thm:span} this space is spanned
by the zigzag words of length $2n+1$ that begin and end with $e^*_0$.
Let $x_1x_2\ldots x_{2n+1}$ denote such a zigzag word. 
Using Theorem \ref{thm:p} we find $\ox_i=0$ for all odd 
$i$ $(1 \leq i \leq 2n+1)$.
Therefore  $x_1x_2\ldots x_{2n+1}$ is contained in \eqref{eq:space3} and
the result follows.
\end{proof}

\begin{corollary}    {\rm \cite[Theorem 2.6]{NT:structure}}
The algebra $e^*_0Te^*_0$ is generated by $e^*_0De^*_0$.
\end{corollary}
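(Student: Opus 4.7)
The plan is to reduce the spanning of $e^*_0Te^*_0$ to products of elements from the space \eqref{eq:space2} and then invoke Lemma \ref{lem:eDDDe} to replace those by products of elements of $e^*_0De^*_0$.

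First I would observe that $T$ is generated as a $\K$-algebra by $D \cup D^*$. Indeed, by Lemma \ref{lem:basisD}, each $e_i$ lies in $D$ and each $e^*_i$ lies in $D^*$; together with $a \in D$ and $a^* \in D^*$, these exhaust the generators of $T$ from Definition \ref{def:T}. Consequently, $e^*_0Te^*_0$ is spanned by products $e^*_0 X_1 X_2 \cdots X_m e^*_0$ where each $X_i$ lies in $D$ or in $D^*$.

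Next I would normalize such a product. If two consecutive $X_i$'s lie in the same subalgebra we multiply them together; this reduces to the alternating case. Since $e^*_0 \in D^*$, if the first (resp.\ last) factor after $e^*_0$ (resp.\ before $e^*_0$) lies in $D^*$, we absorb it into $e^*_0$. Thus, after renaming, every product with $m \geq 1$ factors either equals a scalar multiple of $e^*_0$ or has the form
\[
  e^*_0\, Y_1\, Z_1\, Y_2\, Z_2 \cdots Z_{n-1}\, Y_n\, e^*_0
  \qquad (Y_i \in D,\ Z_i \in D^*)
\]
for some $n \geq 1$. Hence $e^*_0 T e^*_0$ is contained in the span of $e^*_0$ together with the spaces $e^*_0 D D^* D D^* \cdots D D^* D e^*_0$ (with $n$ copies of $D$) for $n \geq 1$, i.e.\ the spaces \eqref{eq:space2}.

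Now I would apply Lemma \ref{lem:eDDDe} to identify \eqref{eq:space2} with \eqref{eq:space3} $= e^*_0 D e^*_0 D e^*_0 \cdots D e^*_0$, which is manifestly contained in the $\K$-subalgebra generated by $e^*_0 D e^*_0$ (it is the span of $n$-fold products of elements of $e^*_0 D e^*_0$, using $e^*_0 e^*_0 = e^*_0$). Combining with the previous step, every element of $e^*_0 T e^*_0$ is a $\K$-linear combination of products of elements of $e^*_0 D e^*_0$, so $e^*_0 T e^*_0$ is generated by $e^*_0 D e^*_0$ as claimed. There is no real obstacle here; the substantive content has already been done in Lemma \ref{lem:eDDDe} (which in turn rests on Theorem \ref{thm:span} and Theorem \ref{thm:p}), and the present corollary is essentially a bookkeeping consequence.
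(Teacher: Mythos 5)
Your proposal is correct and follows exactly the paper's route: the paper's own proof is the one-line "Immediate from Lemma \ref{lem:eDDDe} and since $T$ is generated by $D$, $D^*$," and your argument is simply that same reduction written out in full (alternating normalization, reduction to the spaces \eqref{eq:space2}, then Lemma \ref{lem:eDDDe}). No differences in substance.
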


\begin{proof}
Immediate from Lemma \ref{lem:eDDDe} and since $T$ is generated by
$D$, $D^*$.
\end{proof}

\medskip

We now give some suggestions for future research.

\medskip

\begin{conjecture}    \samepage
For an integer $n\geq 1$,
each of the following sets are linearly independent.
\begin{itemize}
\item[(i)]
The zigzag words of length $n$ in $T$, that begin with a nonstarred element.
\item[(ii)]
The zigzag words of length n in T, that begin with a starred element.
\end{itemize}
\end{conjecture}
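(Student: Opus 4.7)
The plan is to reduce the conjecture to showing that a certain multiplication map is injective, and then to witness that injectivity using sufficiently rich $T$-modules. By the symmetry of Definition \ref{def:T} under swapping starred and nonstarred generators (together with the corresponding swap of the parameter sequences), statements (i) and (ii) are equivalent, so it suffices to prove (i).

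First, I would extend the decomposition of Section \ref{sec:ZZDDDD} to alternating tensor products of arbitrary length. For each $n\ge 1$, form the alternating tensor product $M^{(n)}=D\otimes D^{*}\otimes D\otimes \cdots$ with $n$ factors starting with $D$, and define $R^{(n)}\subseteq M^{(n)}$ as the span of tensors containing a block of the form $\cdots\otimes a^{k}\otimes e^{*}_{j}\otimes\cdots$ or $\cdots\otimes e_{i}\otimes a^{*k}\otimes\cdots$ whose exponent $k$ would trigger \eqref{eq:esiakesj} or \eqref{eq:eiaskej} after contraction with the neighbouring idempotents. Define zigzag elements of $M^{(n)}$ by the obvious generalization of Definition \ref{def:zigzagelement}. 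By iterating the weight-lowering argument of Theorem \ref{thm:mainn4}, applied to each adjacent triple of tensor slots via the polynomials $f^{\pm}_{ij}$ of Definition \ref{def:fij}, one expects that the zigzag elements of $M^{(n)}$ form a basis for a complement of $R^{(n)}$. The multiplication map $\mu_{n}\colon M^{(n)}\to T$ then satisfies $R^{(n)}\subseteq \ker\mu_{n}$ by repeated use of \eqref{eq:esiakesj} and \eqref{eq:eiaskej}, so the conjecture becomes the reverse inclusion $\ker\mu_{n}\subseteq R^{(n)}$, i.e.\ that the zigzag tensors map to linearly independent elements of $T$.

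To establish $\ker\mu_{n}\subseteq R^{(n)}$, the plan is to exhibit, for every admissible parameter sequence, an irreducible $T$-module whose shape saturates the bound of Theorem \ref{thm:main}, namely $\rho_{s}=\rho_{0}\binom{d}{s}$. Inspection of the proof of Theorem \ref{thm:main} shows that saturation forces the inclusion (\ref{eq:star}) to be an equality, so by a dimension count the nonredundant lifting words beginning with $e_{s}$ act as linearly independent operators on $e_{0}V\oplus e^{*}_{0}V$. A zigzag word of general length $n$ is then reduced to a pair of lifting pieces by splitting at the peak position $p$ produced by Theorem \ref{thm:p} and recombining via Theorem \ref{thm:span}. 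The main obstacle is constructing a maximal-shape module uniformly in the parameters: explicit constructions are known in the $q$-Racah and Krawtchouk families (via tensor products of two-dimensional modules), but producing such a module for every sequence satisfying \eqref{eq:distinct} and Lemma \ref{lem:corec}, or else replacing it with a suitable generic/Verma-type $T$-module that witnesses faithfulness on the zigzag complement, is the principal technical difficulty and is presumably why the statement has been left as a conjecture.
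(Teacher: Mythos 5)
This statement is labelled as a \emph{conjecture} in the paper; the authors give no proof of it, so there is nothing to compare your argument against. What matters, then, is whether your proposal actually closes the question, and it does not: it is a strategy outline with at least two genuine gaps, one of which you candidly acknowledge. First, the extension of the basis decomposition from $D^*\ot D\ot D^*\ot D$ (Theorem \ref{thm:mainn4}) to alternating tensor products of arbitrary length is only asserted (``one expects''); the weight function of Definition \ref{def:weight} is tailored to the four-factor case, and iterating the $f^{\pm}_{ij}$ reduction slot-by-slot in a longer product can reintroduce non-zigzag patterns in previously processed triples, so a new global induction (in effect, a tensor-space analogue of the Claims in the proof of Theorem \ref{thm:span}) would be needed. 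This part is plausibly fixable. Second, and decisively, the step $\ker\mu_n\subseteq R^{(n)}$ is exactly the content of the conjecture, and your proposed witness --- an irreducible $T$-module with $\rho_s=\rho_0\binom{d}{s}$ for every admissible parameter sequence --- is not known to exist; its existence is itself an open problem tied to the classification of tridiagonal pairs. Moreover, even granting such a module, your saturation argument only yields linear independence of the \emph{nonredundant lifting} words acting on $e_0V\oplus e^*_0V$, not of all zigzag words of length $n$: splitting a zigzag word at the peak $p$ from Theorem \ref{thm:p} gives two factors, but linear independence of a set of products does not follow from linear independence of the sets of factors, and Theorem \ref{thm:span} is a spanning statement, not an independence statement, so it cannot do the recombination work you assign to it.

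In short, your reduction of the conjecture to the injectivity of the multiplication map $\mu_n$ on a zigzag complement is a reasonable reformulation, but the proposal does not prove the statement; the essential difficulty (producing a sufficiently faithful module, or some other mechanism forcing independence in the abstract algebra $T$) remains exactly where the authors left it.
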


\begin{definition}   \samepage
A word $x_1x_2\cdots x_n$ in $T$ is said to be {\em nonrepeating} whenever 
$\ox_{i-1} \not= \ox_i$ for $2 \leq i \leq n$.
\end{definition}

\begin{conjecture}    \samepage
Each of the following is a basis for the $\K$-vector space $T$.
\begin{itemize}
\item[(i)]
The nontrivial nonrepeating zigzag words that begin with a nonstarred element.
\item[(ii)] 
The nontrivial nonrepeating zigzag words that begin with a starred element.
\item[(iii)]
The nontrivial nonrepeating zigzag words that end with a nonstarred element.
\item[(iv)]
The nontrivial nonrepeating zigzag words that end with a starred element.
\end{itemize}
\end{conjecture}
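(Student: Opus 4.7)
The plan is to use symmetries of $T$ to reduce the four statements to a single case, then address spanning and independence separately. First, the algebra $T'$ obtained from $T$ by swapping the parameter sequences $\{\th_i\}$ and $\{\th^*_i\}$ is isomorphic to $T$ via the map sending $e_i \leftrightarrow e^*_i$ and $a \leftrightarrow a^*$; this isomorphism exchanges nonstarred and starred, so part~(ii) for $T$ is equivalent to part~(i) for $T'$. Second, the defining relations of $T$ are invariant under word-reversal, so $T$ admits an anti-automorphism fixing each idempotent generator; this identifies (iii) with (i) and (iv) with (ii). Hence, proving~(i) for all admissible parameter sequences yields the entire conjecture.

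For spanning, Theorem~\ref{thm:span} reduces the task to expressing each zigzag word as a linear combination of nontrivial nonrepeating zigzag words beginning with a nonstarred element. A zigzag word $w$ that begins with $e^*_s$ can be rewritten as $w = \sum_i e_i w$ using $1 = \sum_i e_i$, and each summand can then be re-zigzagified via Theorem~\ref{thm:span}; although this increases the length by one, the identity $e_s w = w - \sum_{i \neq s} e_i w$ (moving the $i=s$ term to the left) cancels the original and supports a well-founded induction. Repeats $\ox_i = \ox_{i+1} = k$ in a nonstarred-starting zigzag word can be removed by expanding $e_k = 1 - \sum_{j \neq k} e_j$ (or the starred analogue) at the offending position, producing shorter words that remain zigzag because of the neighbouring conditions. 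A suitable complexity order (length plus number of adjacent repeats, say) makes termination clear.

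The hard part will be independence. One natural attempt is a PBW/Gr\"obner-type argument: order the nonrepeating zigzag words by length, then by the peak index $p$ of Theorem~\ref{thm:p}, then lexicographically on indices, and try to show that any hypothetical dependence has a leading term that no defining relation $e^*_i a^k e^*_j = 0$ or $e_i {a^*}^k e_j = 0$ (for $k < |i-j|$) can cancel. However, these vanishing relations do not appear to form a confluent rewriting system in any obvious way. The alternative I would pursue is representation-theoretic: by Lemma~\ref{lem:T-module} every tridiagonal system with the given parameter sequences produces an irreducible $T$-module, and a sufficiently rich direct sum of such modules should be a faithful $T$-module in which one can witness the independence of nonrepeating zigzag words by their action on a distinguished vector. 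The main obstacle, and presumably the reason this remains open, is the absence of a general existence/classification result producing tridiagonal systems of arbitrarily many distinct shapes; without such an input there is no obvious way to separate the conjectured basis elements.
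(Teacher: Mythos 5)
This statement is one of the open problems the paper lists in its final ``Remarks'' section under ``suggestions for future research''; the authors give no proof of it, so there is no argument of theirs to compare yours against. The relevant question is therefore whether your proposal actually settles the conjecture, and it does not. You concede this yourself for the linear-independence half: you describe two possible strategies (a Gr\"obner/PBW-type leading-term argument and a representation-theoretic faithfulness argument), identify why each one stalls, and stop. A proof of a basis statement that leaves independence open is not a proof, and independence is precisely the part that goes beyond what the paper establishes (Theorem \ref{thm:span} and its corollaries are purely spanning statements).

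The spanning half of your argument also has concrete gaps. Your induction is not well-founded: inserting $1=\sum_i e_i$ in front of a word beginning with $e^*_s$ produces words of length $n+1$, and the identity $e_sw=w-\sum_{i\neq s}e_iw$ is just a rearrangement of $w=\sum_i e_iw$, not a reduction in any complexity measure; you never exhibit a quantity that strictly decreases. The repeat-removal step is similarly unjustified: expanding $e_k=1-\sum_{j\neq k}e_j$ at a repeated position produces, in the ``$1$'' term, a product of two same-parity generators (which collapses via $e^*_ke^*_m=\delta_{k,m}e^*_k$ but need not leave a zigzag word), and the remaining terms need not be zigzag either, so one must re-invoke Theorem \ref{thm:span} and again control a recursion that can increase length. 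Finally, your symmetry reduction of (iii),(iv) to (i),(ii) via word-reversal needs care: ``between'' in Definition \ref{def:between} is defined for an \emph{ordered} pair and is not symmetric when the middle index coincides with an endpoint, so reversal does not obviously preserve the zigzag condition of Definition \ref{def:zz}; this must be checked (it is plausible for nonrepeating words, but you do not verify it). None of these observations suggests the conjecture is false --- only that your proposal, like the paper, leaves it open.
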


\medskip

We mention a stronger version of Theorem \ref{thm:main}.

\medskip

\begin{conjecture}   \label{conj:shape}    \samepage
Let $\{\rho_i\}_{i=0}^d$ denote the shape of a tridiagonal pair. 
Then there exists a nonnegative integer $N$ and positive integers 
$d_1, d_2, \ldots, d_N$ such that
\[
  \sum_{i=0}^d \rho_i z^i = \rho_0 \prod_{j=1}^N (1+z+z^2 +\cdots + z^{d_j}).
\]
Here $z$ denotes an indeterminate.
\end{conjecture}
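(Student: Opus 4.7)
The plan is to upgrade the inequality in Theorem \ref{thm:main} to an exact factorization by producing, on the underlying vector space $V$, a family of commuting nilpotent ``raising'' operators whose joint monomial action on $e^*_0V$ reproduces the split decomposition with the right multiplicities. First I would reduce to the case $\rho_0=1$ by passing to an algebraic closure $\overline{\K}$, since the shape of $A,A^*$ is unchanged under this base change (the primitive idempotents $E_i,E^*_i$ are defined over $\K$), and the existence of the factorization over $\overline{\K}$ pulls back: once $\sum_i \rho_i z^i = \prod_{j=1}^N (1+z+\cdots+z^{d_j})$ holds over $\overline{\K}$, the same $N$ and $d_j$ work over $\K$ after multiplying by the shared factor $\rho_0$. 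After the reduction one has the convenient normalization $\dim e^*_0V=1$.

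Assuming $\rho_0=1$, fix $0\neq v_0\in e^*_0V$; by irreducibility of the $T$-module $V$ one has $V=Tv_0$. The split decomposition furnishes subspaces $\{U_i\}_{i=0}^d$ with $\dim U_i=\rho_i$, an index grading on $V$, and a canonical raising endomorphism $R$ with $RU_i\subseteq U_{i+1}$ built from $A$, $A^*$, and the eigenvalue data. The goal is to exhibit commuting elements $R_1,\ldots,R_N$ in the index-graded algebra generated by $R$ and related operators such that $R_j$ raises the split index by one, $R_j^{d_j+1}=0$, and the monomials $R_1^{a_1}\cdots R_N^{a_N}v_0$ for $0\le a_j\le d_j$ form a basis of $V$. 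Granting this, each monomial lies in $U_{a_1+\cdots+a_N}$, so counting monomials of degree $i$ yields $\rho_i = [z^i]\prod_{j=1}^N(1+z+\cdots+z^{d_j})$, as desired. I would verify linear independence by using the zigzag-word machinery of Sections \ref{sec:zzspan}--\ref{sec:proofofmain} to bound $\dim e_sV\cdot e^*_0V$ and match it against the monomial count. The known classifications (Krawtchouk, $q$-geometric, $q$-Racah) give such $R_j$'s coming from a tensor-product decomposition of $V$ into evaluation modules over a (quantum) affine algebra, and the $d_j+1$ are the dimensions of the tensor factors, so the conjectured form is consistent with all known cases.

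The main obstacle is the construction of the $R_j$ uniformly, without passing through a classification by type. Theorem \ref{thm:main} only controls $\dim U_i$ by the crude bound $\binom{d}{i}$, which corresponds to taking every $d_j=1$; to refine this to the actual $d_j$'s one must detect the presence of longer ``strings'' inside $V$ than a single application of $R$ at a time can see. A plausible route is to study the commutative algebra $e^*_0Te^*_0$ (shown after Lemma \ref{lem:eDDDe} to be generated by $e^*_0De^*_0$) together with its faithful action on a related graded space, and to identify its Hilbert series as $\prod_j (1+z+\cdots+z^{d_j})$; another is to analyze the Jordan structure of $R$ restricted to various $T$-stable filtrations. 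Either route appears to require genuinely new structural input beyond the zigzag-basis results developed here, which is why I would expect a complete proof to require substantial additional theory.
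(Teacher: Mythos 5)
The statement you are addressing is Conjecture \ref{conj:shape}; the paper does not prove it, and offers it explicitly as a suggestion for future research. So there is no proof in the paper to compare yours against, and what you have written is not a proof either: it is a research plan whose central step --- the construction of commuting raising operators $R_1,\ldots,R_N$ with $R_j^{d_j+1}=0$ and with the monomials $R_1^{a_1}\cdots R_N^{a_N}v_0$ forming a basis of $V$ --- is exactly the content of the conjecture restated in different language. You acknowledge this yourself in your final paragraph, so the honest assessment is that the argument has a gap coextensive with the entire claim.

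Two more specific problems with the parts you do assert. First, the reduction to $\rho_0=1$ by base change to $\overline{\K}$ is not justified: after extending scalars, condition (iv) of Definition \ref{def:tdp} (irreducibility) can fail, so $A\otimes 1, A^*\otimes 1$ need not be a tridiagonal pair on $V\otimes_{\K}\overline{\K}$. One must instead decompose the extended module into irreducible constituents and relate their shapes to $\{\rho_i\}_{i=0}^d$; that the shape over $\K$ is $\rho_0$ times the shape of a single constituent over $\overline{\K}$ is itself a nontrivial structural assertion, not a formal consequence of the idempotents being defined over $\K$. Second, the claim that counting monomials ``yields $\rho_i=[z^i]\prod_j(1+z+\cdots+z^{d_j})$'' requires not just that the monomials span $U_i$ but that they are linearly independent, and the zigzag machinery of Sections \ref{sec:zzspan}--\ref{sec:proofofmain} only gives upper bounds on dimensions (spanning sets), never lower bounds; indeed the linear independence of zigzag words is itself left as a conjecture in Section \ref{sec:conj}. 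Your proposal is a reasonable description of why one might believe the conjecture and of where the difficulty lies, but it should not be presented as a proof.
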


\medskip

See \cite[Theorem 9.1]{IT:nonnil},
\cite[Theorem 8.3]{IT:Krawt},
\cite[Theorem 3.3]{N:refine}
for partial results on Conjecture \ref{conj:shape}.

\section{Acknowledgements}

\indent
The second author gratefully acknowledges many conversations with 
Tatsuro Ito (Ka\-na\-za\-wa University) on the general subject of this paper. 
The resulting insights lead directly to the paper, and consequently
we feel that Ito deserves to be a coauthor.
We offered him this coauthorship but he declined.

\bigskip

{

\small

}

\bigskip

\noindent Kazumasa Nomura \hfil\break
\noindent College of Liberal Arts and Sciences \hfil\break
\noindent Tokyo Medical and Dental University \hfil\break
\noindent Kohnodai, Ichikawa, 272-0827 Japan \hfil\break
\noindent email: {\tt knomura@pop11.odn.ne.jp} \hfil\break


\bigskip

\noindent Paul Terwilliger \hfil\break
\noindent Department of Mathematics \hfil\break
\noindent University of Wisconsin \hfil\break
\noindent 480 Lincoln Drive \hfil\break
\noindent Madison, WI 53706-1388 USA \hfil\break
\noindent email: {\tt terwilli@math.wisc.edu }\hfil\break

\end{document}